\numberwithin{equation}{section}
\theoremstyle{plain}
\newtheorem{theorem}[equation]{Theorem}
\newtheorem{fact}[equation]{Fact}
\newtheorem*{theorem*}{Theorem}
\newtheorem*{lemma*}{Lemma}
\newtheorem*{remark*}{Remark}
\newtheorem{proposition}[equation]{Proposition}
\newtheorem{conjecture}[equation]{Conjecture}
\newtheorem{lemma}[equation]{Lemma}
\newtheorem{definition}[equation]{Definition}
\newtheorem{subtheorem}{Theorem}[equation]
\newtheorem{corollary}[subtheorem]{Corollary}
\theoremstyle{remark}
\newtheorem{remark}[equation]{Remark}
\theoremstyle{definition}
\DeclareMathOperator{\AJ}{AJ}
\title{On Fourier--Mukai Transforms of Upward Flows for Hitchin Systems}
\author{David Fang}
\address{Yale University, Department of Mathematics}
\email{david.fang@yale.edu}
\date{}
\DeclareMathOperator{\el}{ell}
\begin{document}

\begin{abstract}

    We consider the moduli space of semistable Higgs bundles on a smooth projective curve. Motivated by mirror symmetry, Hausel and Hitchin showed that over an open of the locus of smooth Hitchin fibers, the duality of Donagi-Pantev intertwines certain Lagrangian upward flows with hyperholomorphic vector bundles constructed from universal Higgs bundles. Using Arinkin's sheaf and some codimension estimates, we show a generalization of this result over the entire Hitchin base, for Higgs bundles of arbitrary degree. 
\end{abstract}

\maketitle

\section*{Introduction}

%GENERAL WORDS

\subsection{Overview} Throughout, we work over the complex numbers $\CC$. Let $C$ be a nonsingular projective curve of genus $g\ge2$. The moduli space $ M_{r,d} $ of semi-stable Higgs bundles on $C$ of rank $r$ and degree $d$ is a hyperk\"ahler variety admitting a proper Lagrangian fibration $ M_{r,d} \xrightarrow{h} A $ to an affine base; this was first introduced in \cite{Hi1}, and is now known as the Hitchin system. We denote by $ M_{r,d} ^{sm}$ the locus where $h$ is smooth; the smooth Hitchin fibers are naturally identified with Jacobians of smooth curves. It was shown in \cite{DP} that for certain choice of $d$, the relative Poincar\'e line bundle $P$ over $M_{r,d}^{sm}\times_AM_{r,d}^{sm}$ induces a Fourier--Mukai equivalence of categories 
\begin{equation}\label{FMT1}
    D^b_{\Coh}( M_{r,d} ^{sm})\xrightarrow{\sim}D^b_{\Coh}( M_{r,d} ^{sm}),
\end{equation}
which may be interpreted as a classical limit of the geometric Langlands correspondence of \cite{BD}. Moreover, it is conjectured that this duality for Hitchin systems can be realized as a Fourier--Mukai transform whose kernel extends that of \cite{DP}. 

We may also interpret this duality following the Kapustin-Witten enhancements \cite{KW} to Kontsevich's homological mirror symmetry \cite{Ko}. In particular, $ M_{r,d} $ is a hyperk\"ahler variety, and homological mirror symmetry is expected to interchange ``BAA'' branes with ``BBB'' branes; roughly this may be interpreted as saying that the Fourier--Mukai transform above interchanges Lagrangian submanifolds with hyperholomorphic sheaves.

In \cite{HH}, Hausel and Hitchin introduced an example of a complex Lagrangian $W_{\delta}^+$ and a hyperholomorphic vector bundle $\Lambda_{\delta}$ which are dual over an open of $ M_{r,d_0} ^{sm}$, where $d_0=-r(r-1)(g-1)$; here by ``dual'' we mean the structure sheaf is of the Lagrangian is mapped to the vector bundle under \Cref{FMT1}. The purpose of this note is to demonstrate how this result can be extended to a larger open $\widetilde{M}_{r,d}^s\subset M_{r,d}^s$ (including the entire elliptic locus), for $d$ arbitrary.

% FOURIER MUKAI + ARINKIN-LI

\subsection{Generalized Poincar\'e sheaves} Here we explain more about the kernel of \Cref{FMT1} and how to generalize it. Under the spectral correspondence of \cite{BNR}, we can realize the moduli space $ M_{r,d} $ as a moduli space of one-dimensional semistable sheaves on $T^{\ast}C$, with the map $h$ sending a sheaf to its Fitting support. In this way we identify the moduli space $ M_{r,d} $ as a partial compactification of the relative Jacobian for the spectral curve $\widetilde C\rightarrow A $. When the spectral curves are integral, Arinkin constructed in \cite{Ar2} a Poincar\'e sheaf $P$ which provides a duality for the fibers. In particular, letting $ A ^{\el}$ denote the locus of integral spectral curves, this construction allows us (setting $d_0 = -r(r-1)(g-1)$) to obtain a Poincar\'e sheaf $\overline P$ on $ M_{r,d_0} ^{\el}\times_{ A } M_{r,d_0} ^{\el}$ inducing a Fourier--Mukai equivalence 
\[D^b_{\Coh}( M_{r,d_0} ^{\el})\xrightarrow{\sim}D_{\Coh}^b( M_{r,d_0} ^{\el})\]
which extends the equivalence of \cite{DP}. Note that $\overline P$ is just a maximal Cohen-Macaulay sheaf on the relative product, but not necessarily a line bundle. 

In \cite{MSY}, it was shown how to construct this equivalence over $A^{\el}$ for arbitrary degree; in this case, the Poincar\'e sheaf $P_{d,e}$ is a twisted sheaf; more precisely $P_{d,e}$ can be realized as sheaves on $\mathcal M_{r,d}^{\el}\times_{A}\mathcal M_{r,e}^{\el}$, where $\mathcal M_{r,d}^{\el}$ is a $\mu_r$-gerbe over $M_{r,d}^{\el}$; we explain more about this construction in \Cref{section:normalization}. 
Moreover, as in \cite{Li} we prove in \Cref{section:poincare} that this sheaf extends naturally to a Cohen-Macaulay sheaf $\overline P_{d,e}$ over $ \mathcal M_{r,d} ^s\times_{ A }\widetilde{\mathcal M}_{r,d} ^s$, where $\widetilde{\mathcal M}_{r,d}^s$ denotes the locus of Higgs bundles with generically regular Higgs fields, and $ M_{r,d} ^s$ is the locus of stable Higgs bundles. The main result of this paper shows that the Fourier--Mukai transform induced by $\overline P$ still sends $\mathcal O_{W_{\delta}^+}$ to $\Lambda_{\delta}$ in some possibly twisted sense, although it is not known in general whether this Fourier--Mukai transform can be extended to a derived equivalence. 

\subsection{Upward flows and mirror symmetry}
The moduli space of semistable Higgs bundles $ M_{r,d} $ admits a natural $\GG_m$ action by $\lambda\cdot (E,\phi) = (E,\lambda\phi)$. For a $\GG_m$-fixed point $[(E,\phi)]$, we define its \emph{upward flow} $W_{(E,\phi)}^+$ to be the set of points contracted to $(E,\phi)$ under the $\GG_m$-action; a Higgs bundle $(E,\phi) \in  M_{r,d} ^{s\GG_m}$ is \emph{very stable} if the corresponding upward flow is closed (this agrees with the usual definition of very stable by \cite[Proposition 2.14]{HH}). In this case, Hausel and Hitchin showed using the Bialynicki-Birula decomposition that the upward flow is a Lagrangian subvariety, naturally isomorphic to an affine space. 

From a sequence of divisors $\delta = (\delta_0, \cdots, \delta_{r-1})$ on $C$ (say with $\delta_i$ effective for $i>0$), we can construct a $\GG_m$-fixed Higgs bundle $E_{\delta} = \bigoplus_i\mathcal O_C(\delta_0 + \dots + \delta_i)\otimes K_C^{-i}$; with the nilpotent Higgs field $\phi_{\delta}$ induced by maps $b_i:\mathcal O_C(\delta_0 + \dots + \delta_{i-1}) \rightarrow\mathcal O_C(\delta_0+\dots+\delta_{i})$. If $b= b_{r-1}\circ\cdots\circ b_1$ has no repeated roots, this is very stable \cite[Theorem 1.2]{HH}, and its upward flow is a Lagrangian subvariety $W_{\delta}^+$. Hausel and Hitchin proposed a conjectural mirror of the Lagrangian $W_{\delta}^+$
\begin{conjecture}[c.f. {\cite[{}1.5]{HH}}]\label{mainconj}
    The Lagrangian subvariety $\mathcal O_{W_{\delta}^+}$ is mirror to $\Lambda_{\delta}$, the latter of which is a hyperholomorphic vector bundle whose construction is explained in \S2.1. 
\end{conjecture}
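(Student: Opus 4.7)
\medskip
\noindent\textbf{Proof proposal.} The plan is to promote \Cref{mainconj} to an identification of (possibly twisted) coherent sheaves on $\widetilde{\mathcal M}_{r,e}^s$ and deduce it from its known form on the smooth Hitchin locus via a codimension/reflexivity argument. Using the Cohen--Macaulay kernel $\overline P_{d,e}$ constructed in \Cref{section:poincare}, I would form the Fourier--Mukai functor $\Phi_{\overline P}\colon D^b(\mathcal M_{r,d}^s)\to D^b(\widetilde{\mathcal M}_{r,e}^s)$ in the twisted sense of \cite{MSY} and apply it to $\mathcal O_{W_{\delta}^+}$; properness of the Hitchin map $h$ makes the pushforward well defined. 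The conjecture then becomes the concrete identification $\Phi_{\overline P}(\mathcal O_{W_{\delta}^+})\cong \Lambda_{\delta}$ as coherent (twisted) sheaves. On the open locus where the spectral curve is smooth, $\overline P$ restricts to the (gerby) Poincar\'e line bundle of \cite{DP}, so the main theorem of \cite{HH} (or its straightforward variant for arbitrary degree) supplies the identification there.

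To extend the identification across all of $\widetilde{\mathcal M}_{r,e}^s$, I would establish two things. First, the complement of the smooth-spectral-curve locus inside $\widetilde{\mathcal M}_{r,e}^s$ has codimension at least $2$; this should follow from a standard estimate on the discriminant of $h$ combined with the generic-regularity condition cutting out $\widetilde{\mathcal M}_{r,e}^s$. Second, both sides are reflexive on $\widetilde{\mathcal M}_{r,e}^s$, so that agreement on an open with codimension-$2$ complement forces agreement everywhere. For $\Lambda_{\delta}$ this is automatic, as it is locally free. For $\Phi_{\overline P}(\mathcal O_{W_\delta^+})$, one combines the Cohen--Macaulay property of $\overline P$ with the flatness of $\mathcal O_{W_\delta^+}$ over $A$ to show the output is torsion-free and satisfies Serre's $S_2$ condition, hence is reflexive.

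The main obstacle is precisely this depth/CM analysis of $\Phi_{\overline P}(\mathcal O_{W_\delta^+})$ over the non-smooth Hitchin locus. There the Bialynicki--Birula affine-cell description of $W_{\delta}^+$ from \cite{HH} no longer applies verbatim, so one must show that the scheme-theoretic closure of $W_\delta^+$ in $\mathcal M_{r,d}^s$ remains flat (or at least of bounded Tor dimension) over $A$ and that pull-push along $\overline P$ does not introduce embedded components in codimension $0$ or $1$. This should reduce to controlling the degeneration of the nilpotent Higgs field $\phi_\delta$ as the spectral curve acquires singularities: because $\widetilde{\mathcal M}_{r,e}^s$ only parametrizes generically regular Higgs fields, the fibers of $W_\delta^+$ over $A$ should remain close enough to Arinkin's compactified-Jacobian picture for the Fourier--Mukai output to inherit the Cohen--Macaulay property from $\overline P$, at which point the codimension/reflexivity extension completes the proof.
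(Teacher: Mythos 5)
Your overall architecture (construct the Cohen--Macaulay kernel, identify the transform on a large open set, extend across a codimension-$2$ complement) matches the paper's, and one of your worries is already resolved in the literature: $h\colon W_\delta^+\to A$ is finite flat over \emph{all} of $A$ by \cite[Propositions 4.6, 5.18]{HH} (recalled as \Cref{hh518}), which is exactly what makes $S_{\overline P_{d,e}}(\mathcal O_{\mathcal W_\delta^+}(-e))$ a twisted vector bundle everywhere (\Cref{lem:CM}); no separate depth or embedded-component analysis of the pull-push is needed.

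The fatal gap is your claim that the complement of the smooth-spectral-curve locus in $\widetilde{\mathcal M}_{r,e}^s$ has codimension $\ge 2$. It does not: the discriminant locus in $A$ is an irreducible \emph{divisor} (of degree $(3r^2-2)(2g-2)$, computed in \Cref{sharpcodim}), its preimage in $M_{r,e}$ is again a divisor, and intersecting with the generically-regular locus removes nothing there, since every torsion-free sheaf on a reduced spectral curve is automatically generically regular. So agreement over $A^{sm}$ alone can never be propagated by an $S_2$/reflexivity argument. The paper must therefore also prove the identity over a second open locus $A^{\sharp}$ dominating the discriminant divisor --- the locus of integral spectral curves whose ramification avoids the points of $\delta$ and $c_0$ --- and only the union $A^{sm}\cup A^{\sharp}$ has codimension-$2$ complement (\Cref{sharpcodim}, which shows the discriminant divisor and the ``ramified over a point of $\delta$'' divisors are distinct irreducible divisors). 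The argument over $A^{\sharp}$ is a genuinely different ingredient your proposal lacks: one first shows (\Cref{sharpstructure}) that the fiber of $W_\delta^+$ over $a\in A^{\sharp}$ consists precisely of the sheaves $\pi_a^{\ast}L_1(D_1+\cdots+D_{r-1})$ with $D_i$ reduced divisors supported on $\pi_a^{-1}$ of the points of $\delta_i$, and then uses the theorem of the square for $\overline P$ (\Cref{square}) together with the Abel--Jacobi compatibility (\Cref{AJ}) to identify the transform of $\mathcal O_{\mathcal W_\delta^+}(-e)$ with the sign-isotypic part of a product of copies of $\EE_{c_{ij}}$, i.e.\ with $\Lambda_\delta$ (\Cref{sharpmirror}). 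Relatedly, citing \cite{HH} over $A^{sm}$ is also insufficient: their theorem holds only over the smaller open $M^{\sharp}$ and for $d=d_0$; the paper instead computes the inverse transform $S^{-1}_{\overline P}(\Lambda_\delta)$ fiberwise via Mukai's theory of homogeneous bundles on abelian varieties to show it is finite flat over $A^{sm}$, and then matches supports (\Cref{smCM}, \Cref{smoothmirror}).
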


%HH PROPOSAL

\subsection{Main results}
In \cite[Theorem 1.5]{HH}, \Cref{mainconj} is proven over some open locus $ M_{r,d} ^{\sharp}$, which lies in the locus of Higgs bundles with everywhere regular Higgs field $ M_{r,d} ^{sm}\subset  M_{r,d} $. In this paper we will formulate and prove a version of \Cref{mainconj} without any restriction on the Hitchin base. In particular, as in \cite{Li}, we note that Arinkin's construction of the Poincar\'e sheaf can be extended to a larger open locus of the relative product, say a sheaf $\overline P$ over the open subset $\widetilde{\mathcal M}_{r,d}^s\times_{ A } \mathcal M_{r,e}^s \subset  \mathcal M_{r,d} \times_{ A } \mathcal M_{r,e} $. We will show further that $W_{\delta}^+$ is contained in $\widetilde{M}_{r,d}^s$, and also that there is a canonical isomorphism $\mathcal W_{\delta}^+ := W_{\delta}^+ \times_{M_{r,d}^s}\mathcal M_{r,d}^s \rightarrow W_{\delta}^+\times B\mu_r$. A line bundle on $\mathcal W_{\delta}^+$ is just one pulled back from $B\mu_r$; when there is no ambiguity, let us denote the bundle corresponding to the character $t\mapsto t^d$ by $\mathcal O_{\mathcal W_{\delta}^+}(d)$. Our main result is the following:

\begin{theorem}\label{main1}
    Let $S_{\overline P^{\el}_{d,e}}:D^b_{\Coh}(\mathcal M_{r,d}^{\el})_{(-e)}\rightarrow D^b_{\Coh}(\mathcal M_{r,e} ^{\el})_{(d)}$ be the Fourier--Mukai transform associated to $\overline P_{d,e}^{\el}$. Then
    \[S_{\overline P^{\el}}((\mathcal O_{\mathcal W_{\delta}^+}(-e))|_{ M_{r,d} ^{\el}}) = \Lambda_{\delta}|_{ \mathcal M_{r,d} ^{\el}}.\]
\end{theorem}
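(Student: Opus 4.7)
The plan is to compare both sides over the dense open $\mathcal{M}_{r,e}^{sm}$ via the Hausel--Hitchin theorem, then propagate the identification across the singular integral spectral curves by combining Cohen-Macaulay/reflexivity properties with a codimension-two estimate.

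\textbf{Step 1: Reduction to the smooth locus.} Restrict both sides to $\mathcal{M}_{r,d}^{sm}$. There the spectral curves are smooth, $\overline{P}_{d,e}^{el}$ is a line bundle which---after tracking the $\mu_r$-characters encoded by the subscripts $(-e)$ and $(d)$---recovers the Donagi--Pantev Poincar\'e bundle of \Cref{FMT1}, and $\mathcal{W}_\delta^+ \cong W_\delta^+\times B\mu_r$ restricts to the Lagrangian considered in \cite{HH}. The argument of \cite[Theorem 1.5]{HH} over the smaller open $M_{r,d}^{\sharp}$ uses only smoothness of the spectral curves and the universal construction of $\Lambda_\delta$, both valid on all of $\mathcal{M}_{r,e}^{sm}$, giving
\[
    S_{\overline{P}^{el}}(\mathcal O_{\mathcal W_\delta^+}(-e))|_{\mathcal{M}_{r,e}^{sm}}=\Lambda_\delta|_{\mathcal{M}_{r,e}^{sm}}.
\]

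\textbf{Step 2: Cohen-Macaulay extension.} I would next show that $S_{\overline{P}^{el}}(\mathcal O_{\mathcal W_\delta^+}(-e))$ is a coherent sheaf concentrated in degree zero which satisfies Serre's condition $S_2$. The required inputs are: the maximal Cohen-Macaulayness of $\overline{P}_{d,e}^{el}$ proved in \Cref{section:poincare}; the containment $\mathcal W_\delta^+\subset \widetilde{\mathcal M}_{r,d}^s$, so that the tensor product with the Poincar\'e sheaf is meaningful; finiteness of $W_\delta^+ \to A$, which forces higher direct images to vanish; and flatness of the relative compactified Jacobian over $A^{el}$. The bundle $\Lambda_\delta$ is locally free, hence reflexive.

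\textbf{Step 3: Codimension and conclusion.} Let $U\subset \mathcal M_{r,e}^{el}$ be the largest open on which both sheaves are described by the explicit Hausel--Hitchin formula. Combining the codimension-two bound on the non-generically-regular Higgs locus (in the spirit of \cite{Li}) with a fiberwise analysis of $\mathcal W_\delta^+$ along the discriminant $A^{el}\setminus A^{sm}$, one shows that $\mathcal M_{r,e}^{el}\setminus U$ has codimension at least two. Two reflexive sheaves agreeing on such a complement are equal, yielding the theorem. The genuine difficulty lies precisely in this codimension estimate: the singular integral spectral locus is only codimension one in $A^{el}$, so codimension two must be extracted from the moduli-theoretic structure of $W_\delta^+$---specifically from finiteness of $W_\delta^+\to A$ together with the correspondingly lower-dimensional intersection of $\mathcal W_\delta^+$ with singular-fiber strata of the compactified Jacobian.
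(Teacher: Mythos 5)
There is a genuine gap, in two places. First, your Step 1 asserts that the argument of \cite[Theorem 1.5]{HH} ``uses only smoothness of the spectral curves'' and therefore extends from $M_{r,d}^{\sharp}$ to all of $\mathcal M_{r,e}^{sm}$. That is not the case: the Hausel--Hitchin computation rests on the explicit description of the fibers $W_{\delta}^+\cap h^{-1}(a)$ as products of choices of $r-i$ points in each reduced fiber $\pi_a^{-1}(c_{ij})$ (\Cref{sharpstructure}, \Cref{sharpproduct}), which requires $\operatorname{div} b\cup\{c_0\}$ to avoid the branch locus of the spectral cover. A smooth spectral curve can perfectly well be ramified over a point of $\operatorname{div} b$, and there the product decomposition, and with it the direct computation of the transform, breaks down. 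The paper handles $A^{sm}$ by an entirely different route: one applies the \emph{inverse} transform to $\Lambda_{\delta}$, shows via Mukai's theory of homogeneous bundles on abelian varieties that the result is fiberwise supported on a finite scheme of length $\operatorname{rk}\Lambda_{\delta}$, deduces it is a flat Cohen--Macaulay sheaf finite over $A^{sm}$ (\Cref{smCM}, \Cref{fiberlemma}), and only then pins down its support as $\mathcal W_{\delta}^+$ by comparing with the generic identification.

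Second, and more seriously, your Step 3 correctly identifies the crux --- the discriminant is a divisor in $A^{\el}$, so agreement over $A^{sm}$ alone does not suffice for a reflexivity argument --- but the proposed remedy does not produce the needed codimension-two bound. Both sides of the identity are vector bundles on all of $\widetilde{\mathcal M}_{r,e}^s$ (\Cref{lem:CM}), so the locus where they could disagree has nothing to do with how $\mathcal W_{\delta}^+$ meets singular fibers of the compactified Jacobian; finiteness of $W_{\delta}^+\to A$ cannot by itself shrink the discriminant. What is actually needed is a direct proof of the identity over the generic point of the discriminant. The paper achieves this by working over $A^{\sharp}\subset A^{\el}$ --- spectral curves, possibly singular, for which $\operatorname{div} b\cup\{c_0\}$ avoids ramification --- where the product structure of $W_{\delta}^+$ and the theorem of the square for the Arinkin kernel (\Cref{square}, \Cref{AJ}) still apply verbatim (\Cref{sharpmirror}). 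The residual bad locus $A\setminus(A^{sm}\cup A^{\sharp})$ is then the intersection of two \emph{distinct irreducible} divisors (the discriminant, of degree $(3r^2-2)(2g-2)$ in a natural compactification, and the ramification-at-$p$ divisors, of degree $2r-2$), hence of codimension $\ge 2$ (\Cref{sharpcodim}); only then does the extension lemma for Cohen--Macaulay sheaves (\Cref{lem:CMprop}) close the argument. Without an analogue of the $A^{\sharp}$ computation, your reflexive sheaves agree only off a divisor, and the conclusion does not follow.
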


In the above we only needed to use the formalism of \cite[\S4]{MSY}. When the locus of non-integral spectral curves has codimension $\ge 2$ (i.e. $r> 2$ or $g> 2$), the following comes rather easily from \Cref{main2}: 

\begin{corollary}\label{main2}
    Suppose that either $r\ne 2$ or $g>2$, and let $S_{\overline P_{d,e}}: D^b_{\QCoh}( \mathcal M_{r,d} ^s)_{(-e)}\rightarrow D^b_{\QCoh}(\widetilde{\mathcal M}_{r,e}^s)_{(d)}$ be the Fourier--Mukai transform associated to the sheaf $\overline P_{d,e}$. Then
    \[S_{\overline P_{d,e}}(\mathcal O_{\mathcal W_{\delta}^+}(-e)) = \Lambda_{\delta}|_{\widetilde{\mathcal M}_{r,d}^s}.\] 
\end{corollary}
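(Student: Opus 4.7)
The plan is to deduce the corollary from \Cref{main1} by combining flat base change with a codimension estimate and a Hartogs-type extension. The hypothesis $r\ne2$ or $g>2$ is precisely what ensures that the non-integral locus $A\setminus A^{\el}$ has codimension at least $2$ in the Hitchin base; since the Hitchin map restricted to $\widetilde{\mathcal M}_{r,e}^s$ is flat with equidimensional fibers, the complement $\widetilde{\mathcal M}_{r,e}^s\setminus\mathcal M_{r,e}^{\el}$ also has codimension at least $2$. Write $j$ for the open embedding $\mathcal M_{r,e}^{\el}\hookrightarrow\widetilde{\mathcal M}_{r,e}^s$.

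Because the kernel $\overline P_{d,e}$ restricts to $\overline P_{d,e}^{\el}$ over $A^{\el}$, flat base change for the open embedding yields a canonical isomorphism
\[
j^*\, S_{\overline P_{d,e}}\!\bigl(\mathcal O_{\mathcal W_{\delta}^+}(-e)\bigr)\;\simeq\; S_{\overline P_{d,e}^{\el}}\!\bigl((\mathcal O_{\mathcal W_{\delta}^+}(-e))|_{\mathcal M_{r,d}^{\el}}\bigr),
\]
which by \Cref{main1} is identified with $\Lambda_{\delta}|_{\mathcal M_{r,e}^{\el}}$. Thus the two sheaves to be compared already agree on the elliptic open, and the problem reduces to extending this identification across a codimension-$2$ complement.

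The extension is handled by reflexivity. On the $\Lambda_{\delta}$ side this is automatic: as a locally free sheaf on a smooth scheme, $\Lambda_{\delta}|_{\widetilde{\mathcal M}_{r,e}^s}$ coincides with $j_* j^* \Lambda_{\delta}|_{\widetilde{\mathcal M}_{r,e}^s}$, and its higher direct images through $j$ vanish. The main technical point is the analogous statement for the Fourier--Mukai side, namely that $S_{\overline P_{d,e}}(\mathcal O_{\mathcal W_{\delta}^+}(-e))$ is concentrated in cohomological degree $0$ and equals the $j_*$-extension of its restriction; this is the main obstacle. I would establish it by a local depth computation along the non-elliptic locus, combining the Cohen--Macaulay property of $\overline P_{d,e}$ proved in \Cref{section:poincare} with the local complete intersection structure of $\mathcal W_{\delta}^+$ in $\mathcal M_{r,d}^s$. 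Once this depth bound is in hand, applying $Rj_*$ to the isomorphism on the elliptic locus yields the corollary.
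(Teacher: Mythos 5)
Your overall architecture is the right one and matches the paper's: restrict to the elliptic locus, invoke \Cref{main1} there via flat base change along the open immersion, and then extend the identification across a complement of codimension $\ge 2$ (which is exactly where the hypothesis $r\ne 2$ or $g>2$ enters). The extension on the $\Lambda_{\delta}$ side is indeed automatic. The problem is the step you yourself flag as ``the main obstacle'': you need to know that $S_{\overline P_{d,e}}(\mathcal O_{\mathcal W_{\delta}^+}(-e))$ is a sheaf on $\widetilde{\mathcal M}_{r,e}^s$ in degree $0$ satisfying the Hartogs property, and the route you sketch --- a local depth computation combining the Cohen--Macaulayness of $\overline P_{d,e}$ with the lci structure of $\mathcal W_{\delta}^+$ in $\mathcal M_{r,d}^s$ --- does not obviously deliver this. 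Depth bounds on the complex $\overline P_{d,e}\otimes p_1^{\ast}\mathcal O_{\mathcal W_{\delta}^+}(-e)$ on the fiber product control that object, not its derived pushforward $\R p_{2\ast}$; without further input the pushforward could have higher cohomology and lose depth, and the lci structure of $\mathcal W_{\delta}^+$ in the \emph{first} factor says nothing about the behavior of the projection to the \emph{second} factor.

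The missing geometric input is \Cref{hh518}: the Hitchin map restricted to $W_{\delta}^+$ is finite and flat over $A$. This is what the paper's \Cref{lem:CM} uses: the projection $\mathcal W_{\delta}^+\times_A\widetilde{\mathcal M}_{r,e}^s\to\widetilde{\mathcal M}_{r,e}^s$ is then finite flat (in particular affine), so $\R p_{2\ast}=p_{2\ast}$ is exact, and since $\overline P_{d,e}$ restricted to $\widetilde{\mathcal M}_{r,d}^s\times_A\widetilde{\mathcal M}_{r,e}^s$ is flat over the second factor (by the symmetry of the defining formula for $P_{d,e}$), the pushforward is a twisted vector bundle on all of $\widetilde{\mathcal M}_{r,e}^s$. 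Once that is in hand, your reflexivity argument (in the form of \Cref{lem:CMprop}) closes the proof exactly as in the paper. So the gap is concrete: replace the proposed depth computation by the finite-flatness of $W_{\delta}^+\to A$ together with flatness of the kernel over the second factor; as written, your key step would not go through.
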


\noindent Note that for the above functor we need to work with $\QCoh$ instead of $\Coh$, as the spaces $\widetilde{M}_{r,e}^s$ and $ M_{r,d} ^s$ are not proper over the Hitchin base. Moreover, we do not assume that $\overline P$ comes from a Fourier--Mukai equivalence.

\subsection{Acknowledgements}

We would like to thank Junliang Shen for sharing this problem with him, as well for his continued support. The author would also like to thank David Bai, Joakim F{\ae}rgeman, Soumik Ghosh, and Weite Pi for helpful discussions and comments.

\section{Background}

\subsection{Semistable Higgs bundles and the spectral correspondence}
Fix a smooth projective curve $C$ with genus $g\ge 2$, and a positive integer $r$. Recall that a \emph{Higgs bundle} on $C$ is a pair $(E,\phi)$ consisting of a vector bundle $E$ and a map $\phi:E\rightarrow E\otimes K_C$, where $K_C$ is the canonical bundle of the curve; by the \emph{rank} and \emph{degree} of a Higgs bundle we mean that of its underlying vector bundle. A Higgs bundle $(E,\phi)$ is \emph{slope semistable} (resp. stable) if for any sub-Higgs bundle $(E', \phi')$ one has
\[\frac{\deg(E')}{\rank(E')} \le \frac{\deg(E)}{\rank(E)}\qquad (\text{resp. }\frac{\deg(E')}{\rank(E')} < \frac{\deg(E)}{\rank(E)}).\] In this note we will consider the moduli space $ M_{r,d} $ of semi-stable rank $r$ degree $d$ Higgs bundles on $C$. Let 
\[ A  = \bigoplus_{i=1}^r H^0(C, K_C^{\otimes i})\] 
be the Hitchin base, and $h: M_{r,d} \rightarrow A $ be the Hitchin fibration; recall that this sends a pair $(E,\phi)$ to the characteristic polynomial of $\phi$. We denote by $ M_{r,d} ^s$ to be the open subscheme of strictly stable Higgs bundles.

Following \cite{BNR}, we may also interpret $ M_{r,d} $ as a moduli space of sheaves on a surface. The correspondence is roughly as follows: given a Higgs bundle $(E,\phi)$, we interpret the Higgs field $\phi:K_C^{-1}\rightarrow \End(E)$ as an action of $K_C^{-1}$ on the bundle $E$. This induces an action $\Sym^{\bullet}K_C^{-1}\rightarrow\End(E)$, realizing $E$ as a module over $\Sym^{\bullet}K_C^{-1}$, or equivalently a quasi-coherent sheaf on $T^{\ast}C$, which we label $R_{(E,\phi)}$. It follows that the fitting support of $R_{(E,\phi)}$ is precisely the curve defined by the characteristic polynomial of $\phi$; in this way we can view $ A $ as a moduli space of (possibly singular or nonreduced) curves in $T^{\ast}C$, which has a universal curve $\widetilde C\rightarrow A \times T^{\ast}C$. Conversely, given a sheaf supported on $\widetilde C_a$ for some $a\in  A $, its pushforward has the structure of a Higgs bundle. 

Under the spectral correspondence described above, the fiber $h^{-1}(a)$ of the Hitchin map is identified with a moduli space of sheaves supported on a spectral curve $\widetilde C_a$. The generic spectral curve is smooth and irreducible, so the corresponding fiber is identified with a component of the Picard group of the spectral curve. If $d= -r(r-1)(g-1)$, then the generic fibers are identified with the degree $0$ Jacobians of the spectral curves. For the rest of the paper we'll set $d_0 := -r(r-1)(g-1)$.  

More generally, under the spectral correspondence, line bundles on spectral curves correspond to Higgs bundles with \emph{everywhere regular} Higgs field, i.e. Higgs bundles $(E,\phi)$ such that the eigenspaces of $\phi|_c$ are all one-dimensional for all $c\in C$. We say a Higgs field is \emph{generically regular} if for all but finitely many $c\in C$, the eigenspaces of $\phi|_c$ are all one-dimensional. One checks easily that when the spectral curve is smooth, all Higgs fields are everywhere regular, and when the spectral curve is reduced, all Higgs fields are generically regular. We let $M_{r,d}^{reg}\subset M_{r,d}^s$ denote the open locus of stable Higgs bundles with everywhere regular Higgs field, and $\widetilde M_{r,d}^s\subset M_{r,d}^s$ denote the open locus of stable Higgs bundles with generically regular Higgs field.

\subsection{Universal sheaves and construction of the mirror}\label{section:normalization}
We rewrite the ideas of \cite[\S6.2]{HH} using the ideas of \cite[\S4]{MSY}. Fix for once and for all a point $c_0\in C$. We consider the stack $\mathcal M^s_{r,d}$ defined by the groupoid functor
\[\mathcal M^s_{r,d}(T) := \left\{(E, \phi, \sigma)\middle | \substack{(E,\phi)\text{ is a } T\text{-flat family of stable Higgs bundles of rank }r\text{ and degree } d\text{ on } C\times T,\\\sigma:\det(\pi_{T\ast}(E|_{c_0}))\xrightarrow{\sim}\mathcal O_T}\right\}\]

\begin{proposition}
    The functor $\mathcal M^s_{r,d}$ is represented by a Deligne-Mumford stack which is a $\mu_r$-gerbe over $M^s_{r,d}$. 
\end{proposition}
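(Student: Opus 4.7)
The plan is to realize $\mathcal{M}^s_{r,d}$ as a partial rigidification of the usual (non-rigidified) moduli stack $\mathcal{N}^s_{r,d}$ of rank $r$, degree $d$ stable Higgs bundles on $C$. The latter is a well-known smooth Artin stack whose automorphism group at every geometric point is $\mathbb{G}_m$, acting by the scalar multiplications $\lambda\cdot \mathrm{id}_E$ (which commute with any Higgs field), and the natural map $\mathcal{N}^s_{r,d}\to M^s_{r,d}$ to the coarse moduli is a $\mathbb{G}_m$-gerbe.

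I would first construct a morphism $\det\nolimits_{c_0}\colon \mathcal{N}^s_{r,d}\to B\mathbb{G}_m$ sending a $T$-family $(E,\phi)$ to the line bundle $\det(\pi_{T\ast}(E|_{\{c_0\}\times T}))$ on $T$. Unwinding definitions, the functor $\mathcal{M}^s_{r,d}$ is canonically the $2$-fiber product
\[
\mathcal{M}^s_{r,d}\;=\;\mathcal{N}^s_{r,d}\times_{B\mathbb{G}_m}\mathrm{Spec}(\mathbb{C}),
\]
where $\mathrm{Spec}(\mathbb{C})\to B\mathbb{G}_m$ classifies the trivial line bundle. This immediately shows that $\mathcal{M}^s_{r,d}$ is representable by an Artin stack, locally of finite type over $\mathbb{C}$.

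Next I would compute the stabilizers. Since $\det(\lambda\cdot\mathrm{id}_{E|_{c_0}})=\lambda^r$, the subgroup of scalar automorphisms preserving a trivialization $\sigma$ of $\det(E|_{c_0})$ is exactly $\mu_r\subset\mathbb{G}_m$. Thus every geometric automorphism group of $\mathcal{M}^s_{r,d}$ is finite étale of order $r$, so $\mathcal{M}^s_{r,d}$ is Deligne--Mumford. For the gerbe property over $M^s_{r,d}$, I would note that étale locally on $M^s_{r,d}$ a universal Higgs bundle exists (local triviality of the $\mathbb{G}_m$-gerbe $\mathcal{N}^s_{r,d}\to M^s_{r,d}$), and after further étale localization the line bundle $\det(E|_{c_0})$ admits a trivialization, producing an étale-local section of $\mathcal{M}^s_{r,d}\to M^s_{r,d}$. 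Combined with the computation that the residual gerbe over any closed point is $B\mu_r$ (the fiber of $B\mathbb{G}_m\xrightarrow{(-)^r}B\mathbb{G}_m$ over a point is $B\mu_r$), this shows $\mathcal{M}^s_{r,d}\to M^s_{r,d}$ is a $\mu_r$-gerbe.

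The only nontrivial point is verifying that $\det\nolimits_{c_0}$ is well-defined as a morphism of stacks (i.e.\ functorial in $T$ and compatible with isomorphisms of families) and that its effect on inertia is indeed the $r$-th power map; everything else is formal from the fiber-product presentation. I do not expect any serious geometric obstacle here — the statement is essentially a bookkeeping exercise in rigidification.
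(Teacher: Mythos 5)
Your proposal is correct and follows essentially the same route as the paper: both realize $\mathcal M^s_{r,d}$ as the $2$-fiber product of the $\GG_m$-gerbe moduli stack of stable Higgs bundles with a point over $B\GG_m$, via the map classified by $\det(\pi_{T\ast}(E|_{c_0}))$, and both deduce the $\mu_r$-gerbe structure from the fact that scalars act on this determinant by $\lambda\mapsto\lambda^r$. Your write-up simply fills in the stabilizer computation and the \'etale-local sections that the paper leaves implicit.
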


\begin{proof}
    This is the exact same as in \cite[Proposition 4.1]{MSY}; we recall it here for completeness. Let $\mathfrak M^s_{r,d}$ denote the moduli stack of stable Higgs bundles; this is a $\GG_m$-gerbe over $M^s_{r,d}$, with a universal Higgs bundle $(\mathcal E,\Phi)$ over $\widetilde C\times\mathfrak M^s_{r,d}$. The line bundle $\det(\pi_{\mathfrak M^s_{r,d}\ast}\mathcal E_{c_0})$ determines a map
    \[\mathfrak M^s_{r,d}\rightarrow B\GG_m,\]
    and the additional data $\sigma$ is obtained by taking the base change by the map $\pt\rightarrow B\GG_m$, i.e. we have a Cartesian square
    \[\begin{tikzcd}
        \mathcal M^s_{r,d} \arrow[r] \arrow[d] & \mathfrak M^s_{r,d} \arrow[d] \\
        \pt \arrow[r] & B\GG_m.
    \end{tikzcd}\]
    This shows that $\mathcal M^s_{r,d}$ is a DM stack. The fact that it is a $\mu_r$-gerbe follows from the fact that $E$ has rank $r$. 
\end{proof}
Denote the universal Higgs bundle for $\mathcal M^s_{r,d}$ by $(\EE^d, \Phi^d)$. This is a sheaf on a $\mu_r$-gerbe, or in light of \cite[Proposition 2.1.3.3]{Lieb}, a twisted sheaf on $M^s$; it follows from the definitions that the normalization for $\EE$ is the same as the one for the universal sheaf described in \cite[\S6.2]{HH}. We also denote by $\mathcal F^d\in \Coh(\widetilde C\times_A\mathcal M_{r,d}^s)$ to be the universal sheaf obtained from applying the spectral correspondence to $(\EE^d,\Phi^d)$. Note that this notation is slightly different from that of \cite[\S4]{MSY}; namely there is a shift of degree. 

We now recall the definitions of $W_{\delta}^+$ and $\Lambda_{\delta}$. Fix $\delta = (\delta_0,\delta_1,\dots,\delta_{r-1})$ is a tuple of reduced divisors with disjoint supports on $C$ so that $\delta_i$ is effective for $i\ge 1$. Let us label the supports as 
$\delta_i = c_{i1}+\cdots+c_{im_i}$, where $c_{ij}$ are points of $C$ for $i\ge 1$, and $\pm c_{0j}$ is a point of $C$. In particular, these define a Higgs bundle by 
\begin{equation}\label{eq:delta}
    E_{\delta} := \bigoplus_{i=0}^{r-1}\mathcal O_C(\delta_0+\cdots+\delta_i)\otimes K^{\otimes -i},
\end{equation}
with the Higgs field $\phi_{\delta} = \bigoplus_{i=1}^{r-1}b_i$, where 
\[b_i:\mathcal O_C(\delta_0+\cdots+\delta_{i-1})\rightarrow \mathcal O_C(\delta_0+\cdots+\delta_i)\] 
is the map induced by the divisor $\delta_i$.
By \cite[Theorem 1.2]{HH}, $(E_{\delta}, \phi_{\delta})$ is a very stable Higgs bundle, and its \emph{upward flow} 
\[W_{\delta}^+ := \{(E,\phi)\in  M_{r,d}  : \lim_{\lambda\rightarrow 0}(E,\lambda\phi) = (E_{\delta}, \phi_{\delta})\}\]
is a closed Lagrangian subvariety which is isomorphic to an affine space by \cite[\S2]{HH}. 
Note that since the Brauer group and Picard group of $W_{\delta}^+$ are trivial, in the cartesian square
\[\begin{tikzcd}
    \mathcal W_{\delta}^+ \arrow[r,hookrightarrow] \arrow[d] & \mathcal M_{r,d}^s \arrow[d]\\
    W_{\delta}^+ \arrow[r,hookrightarrow] & M_{r,d}^s
\end{tikzcd}\]
we have a canonical isomorphism of $W_{\delta}^+$-stacks
\[\mathcal W_{\delta}^+\xrightarrow{\sim} W_{\delta}^+\times B\mu_r.\]
We also remark the following:
\begin{lemma}\label{flowsupport}
    The points of $W_{\delta}^+$ are all stable and represented by Higgs bundles with generically regular Higgs fields.
\end{lemma}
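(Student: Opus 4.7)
The plan is to exploit two general principles and combine them with a direct check at the limit $(E_\delta,\phi_\delta)$. First, both stability and generic regularity cut out open subsets of $M_{r,d}$: stability is open by GIT, while generic regularity is open because the locus $Z\subset C\times M_{r,d}^s$ on which $\phi|_c$ fails to be regular is closed (a rank condition on the Jordan type of $\phi|_c$), and so by properness of $C$ the locus where $\phi$ is nowhere regular — i.e., where the fiber of $Z\to M_{r,d}^s$ is all of $C$ — is closed. Second, both conditions are $\GG_m$-invariant: if $(E',\phi')\subset(E,\phi)$ is destabilizing then $(E',\lambda\phi')\subset(E,\lambda\phi)$ is destabilizing of the same slope, and the eigenspaces of $\phi|_c$ agree with those of $\lambda\phi|_c$ for $\lambda\neq 0$.

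Next, I would check that the fixed Higgs bundle $(E_\delta,\phi_\delta)$ itself lies in $\widetilde M_{r,d}^s$. Stability is immediate from its very stability, cited from \cite[Theorem 1.2]{HH}. For generic regularity I would pick any $c\in C$ outside the supports of $\delta_1,\dots,\delta_{r-1}$; then each $b_i|_c$ is a nonzero map between one-dimensional fibers, so in the basis of $E_\delta|_c$ coming from \Cref{eq:delta} the endomorphism $\phi_\delta|_c$ is strictly upper-triangular with nonzero superdiagonal entries. This is a single nilpotent Jordan block, hence regular on $E_\delta|_c$, which gives generic regularity.

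To finish, I would argue by contradiction. If some $(E,\phi)\in W_\delta^+$ were strictly semistable or failed to be generically regular, then by $\GG_m$-invariance the entire orbit $\{(E,\lambda\phi):\lambda\in\GG_m\}$ would lie in the closed complement of the corresponding open locus of $M_{r,d}$, and hence so would its closure inside $M_{r,d}$. But by the very definition of the upward flow this closure contains $(E_\delta,\phi_\delta)$, contradicting the previous paragraph.

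The only nontrivial ingredient is articulating openness of generic regularity cleanly — it is subtler than the condition of having distinct eigenvalues generically, since a Higgs field can be regular at a point even with repeated eigenvalues — but once that is in hand, everything else is a formal consequence of $\GG_m$-equivariance together with the single-Jordan-block computation for $\phi_\delta$.
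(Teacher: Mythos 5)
Your proposal is correct and follows essentially the same route as the paper: verify that $(E_\delta,\phi_\delta)$ itself lies in $\widetilde M_{r,d}^s$ (regularity away from the support of the $\delta_i$, $i\ge 1$), then use that $\widetilde M_{r,d}^s$ is a $\GG_m$-invariant open together with the definition of the upward flow to push every point of $W_\delta^+$ into it. The extra care you take with openness of generic regularity and with the explicit Jordan-block computation is just a more detailed writeup of the same argument.
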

\begin{proof}
    The Higgs field of $(E_{\delta}, \phi_{\delta})$ is regular away from the points of $\delta$; in particular it lies in the open set $\widetilde{M}_{r,d}^s$. On the other hand, $\widetilde{M}_{r,d}^s$ is a $\GG_m$-equivariant open, so it follows that any point $(E,\phi)\in W_{\delta}^+$ must also lie in $\widetilde{M}_{r,d}^s$. 
\end{proof}
The proposed mirror for $W_{\delta}^+$, which was first defined in \cite[\S6.2]{HH}, is constructed as follows:
\begin{equation}\label{eq:lambda}
\Lambda_{\delta} := \bigotimes_{i=1}^{r-1}\bigotimes_{j=0}^{m_i}\bigwedge^{n-i}(\EE_{c_{ij}}^d).
\end{equation}
Here when $-c_{0j}$ is a point of $C$, we set $\EE_{c_{0j}}^d := (\EE^d)^{\vee}_{-c_{0j}}$. In the case $d = d_0$, it was shown that $\Lambda_{\delta}$ is an untwisted vector bundle on $M_{r,d_0}^s$, as shown in \cite[\S6.2]{HH}.

\section{Poincar\'e sheaves}\label{section:poincare}
\noindent We review the ideas developed in \cite{Li} and \cite[\S4]{MSY}, and collect some useful lemmas regarding the Poincar\'e sheaves. We use the following lemma freely throughout this paper:
\begin{lemma}\label{lem:CMprop}
    Let $X$ be a DM stack which is Gorenstein of pure dimension, and $M$ be a maximal Cohen-Macaulay sheaf. Suppose $Z\subset X$ is a closed subscheme of codimension $\ge 2$. Then the canonical morphism $M\rightarrow j_{\ast}(M_{X\setminus Z})$ is an isomorphism, where $j:X\setminus Z\rightarrow X$ is the canonical embedding. 
\end{lemma}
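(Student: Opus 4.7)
The plan is to reduce to the classical local-cohomology statement for maximal Cohen--Macaulay modules over a Gorenstein local ring, using the standard fact that local cohomology can be computed after passing to an étale atlas.

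First, since the claim is étale-local on $X$ and the assertion that $M \to j_\ast(j^\ast M)$ is an isomorphism is equivalent to the vanishing of the local cohomology sheaves $\underline{\Gamma}_Z(M)$ and $\mathcal{R}^1\underline{\Gamma}_Z(M)$, I would choose an étale cover $U \to X$ by a Gorenstein scheme and reduce to the scheme-theoretic case. In that setting, for any coherent sheaf $M$ on a noetherian scheme one has the fundamental exact sequence
\[
0 \longrightarrow \underline{\Gamma}_Z(M) \longrightarrow M \longrightarrow j_\ast j^\ast M \longrightarrow \mathcal{R}^1\underline{\Gamma}_Z(M) \longrightarrow 0,
\]
so the lemma is equivalent to the vanishing $\underline{\Gamma}_Z(M) = \mathcal{R}^1\underline{\Gamma}_Z(M) = 0$.

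Next, I would invoke the depth sensitivity of local cohomology: at any point $x \in Z$, the stalks $\mathcal{R}^i\underline{\Gamma}_Z(M)_x$ vanish for $i < \operatorname{depth}_{I_{Z,x}}(M_x)$, where $I_{Z,x}$ is the ideal of $Z$ in $\mathcal{O}_{X,x}$. Since $X$ is Gorenstein (in particular Cohen--Macaulay) and $M$ is maximal Cohen--Macaulay, one has $\operatorname{depth}(M_x) = \dim \mathcal{O}_{X,x}$. A regular sequence on $M_x$ can then be chosen inside any ideal of height equal to the codimension, so $\operatorname{depth}_{I_{Z,x}}(M_x) \geq \operatorname{codim}_x(Z) \geq 2$. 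This yields the vanishing in degrees $0$ and $1$ and completes the argument.

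The main subtlety, and the only step that merits care, is the last one: verifying that for a maximal Cohen--Macaulay sheaf $M$ on a Gorenstein base, depth along an arbitrary closed subscheme $Z$ is bounded below by $\operatorname{codim} Z$. Concretely, I would argue pointwise, picking $x \in Z$ and choosing a system of parameters for $\mathcal{O}_{X,x}/I_{Z,x}$; since $X$ is Cohen--Macaulay, the full pullback system of parameters for $\mathcal{O}_{X,x}$ is a regular sequence on $\mathcal{O}_{X,x}$, and hence on $M_x$ by the maximal Cohen--Macaulay hypothesis, giving a regular sequence of the required length inside $I_{Z,x}$. Everything else is formal unwinding of local cohomology.
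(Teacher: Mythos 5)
Your proof is correct and follows essentially the same route as the paper: reduce to the scheme case by working étale-locally, then invoke the depth-$\ge 2$ criterion for $M \to j_*j^*M$ to be an isomorphism. The only difference is that the paper obtains this criterion by citing EGA IV 5.10.5 (via Arinkin), whereas you derive it directly from the local cohomology exact sequence and the grade computation for a maximal Cohen--Macaulay module over a Cohen--Macaulay base; this is a correct unwinding of the cited result (and, as your argument shows, the Gorenstein hypothesis is not actually needed).
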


\begin{proof}
    For schemes, this is a corollary of \cite[IV.2, {}5.10.5]{EGA}, as explained in \cite[{}2.2]{Ar2}. For the general case, it suffices to check \'etale locally, after which we are reduced to the case of a scheme. 
\end{proof}

\subsection{Constructions}
In \cite[Prop 3.2.3]{Li}, it is shown that, for $L$ a line bundle with $\deg L >2g$, there is a sheaf $P$ on the relative product $\widetilde{\Higgs(L)}\times_{ A }\Higgs(L)$, where $\widetilde{\Higgs(L)}$ is the moduli stack of semistable rank 2 $L$-twisted Higgs bundles with generically regular Higgs field and $\Higgs(L)$ is the moduli stack of all rank 2 $L$-twisted Higgs bundles. We argue that the same construction works in our setting. As in \cite[\S3]{Li}, we adapt the construction of Arinkin in \cite[\S4]{Ar2}.

First, by applying the spectral correspondence to the universal Higgs bundle $(\EE,\Phi)$ on $\mathcal M^s_{r,d}\times C$, we obtain a universal sheaf, which we call $\mathcal F^{d}$, on $\mathcal M^s_{r,d}\times_{ A }\widetilde C$ (note that this notation differs slightly from that of \cite{MSY}). Let $M^{reg}_{r,d}$ be the open locus of stable Higgs bundles with strictly regular Higgs field, and $\mathcal M^{reg}_{r,d}:= M^{reg}_{r,d}\times_{M^s_{r,d}}\mathcal M^{reg}_{r,d}$; these correspond to line bundles on the spectral curve. Let $p_{ij}$ be the usual projection morphisms from the stack $\widetilde C\times_{ A }\mathcal M^s_{r,d}\times_{ A }\mathcal M^s_{r,e}$; then the formula:
\begin{equation}\label{eq:regular}
    P_{d, e} := \det\R p_{23\ast}(\mathcal F^{d}\boxtimes \mathcal F^{e}) \otimes \det\R p_{23\ast}(p_{12}^{\ast}\mathcal F^{d})^{-1} \otimes \det \R p_{23\ast}(p_{13}^{\ast}\mathcal F^{e})^{-1} \otimes \det\R p_{23\ast}(p_1^{\ast}\mathcal O_{\widetilde C})
\end{equation}
defines a line bundle on $\mathcal M_{r,d}^{reg}\times_{ A }\mathcal M_{r,e}^s\cup \mathcal M_{r,d}^s\times_{ A }\mathcal M_{r,e}^{reg}$, living in the $(e, d)$-isotypic component of the Picard group. We will show that this extends to a maximal Cohen-Macaulay sheaf $\overline P_{d,e}$ on $\widetilde{\mathcal M}_{r,d}^s\times_{ A }\mathcal M^s_{r,e}$, where $\widetilde M_{r,d}^s\subset M_{r,d}^s$ is the open locus of stable Higgs bundles with generically regular Higgs field, and $\widetilde{\mathcal M}_{r,d}^s = \widetilde M_{r,d}^s\times_{M_{r,d}^s}\mathcal M_{r,d}^s$. More precisely:
\begin{proposition}
    Let $j: \mathcal M_{r,d}^{reg}\times_{ A }\mathcal M_{r,e}^s\cup \mathcal M_{r,d}^s\times_{ A }\mathcal M_{r,e}^{reg} \rightarrow \mathcal M_{r,d}^s\times_{ A }\widetilde{\mathcal M}_{r,e}^s$ be the open immersion. Then $\overline P_{d,e}:= j_{\ast}P_{d,e}$ is a maximal Cohen-Macaulay sheaf, flat over the projection to the second factor. 
\end{proposition}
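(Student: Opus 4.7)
The plan is to follow the approach of \cite[\S4]{Ar2} as extended in \cite[Proposition 3.2.3]{Li} and \cite[\S4]{MSY}; the argument combines a codimension estimate with a fiberwise Cohen--Macaulayness analysis.

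\emph{Codimension estimate.} First I would show that the complement of the open
\[U := \mathcal M_{r,d}^{reg}\times_{A}\mathcal M_{r,e}^s\cup \mathcal M_{r,d}^s\times_{A}\mathcal M_{r,e}^{reg}\]
in $\mathcal M_{r,d}^s\times_{A}\widetilde{\mathcal M}_{r,e}^s$ has codimension $\ge 2$. Points of this complement correspond, via the spectral correspondence, to pairs $(\mathcal F_d,\mathcal F_e)$ of sheaves on a common reduced spectral curve $\widetilde C_a$, neither of which is an honest line bundle. On each reduced spectral curve the non-line-bundle locus in the relevant compactified Jacobian has codimension at least one. Fiberwise over $A$, the codimensions of the two factors add in the product, yielding codimension $\ge 2$ in $\mathcal M_{r,d}^s\times_A\widetilde{\mathcal M}_{r,e}^s$.

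\emph{Reflexive extension.} By \Cref{lem:CMprop} applied \'etale-locally---using that $\mathcal M_{r,d}^s\times_A\widetilde{\mathcal M}_{r,e}^s$ is Gorenstein of pure dimension on the relevant opens---the pushforward $\overline P_{d,e}=j_{\ast}P_{d,e}$ of the line bundle $P_{d,e}$ is coherent and coincides with the unique $S_2$-extension across the codimension $\ge 2$ complement. This already gives reflexivity, but not yet maximal Cohen-Macaulay or flat.

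\emph{Fiberwise max Cohen--Macaulayness and flatness.} It remains to upgrade reflexivity to maximal Cohen-Macaulayness and to deduce flatness along the second projection $\pi_2$. Fix $(E_2,\phi_2)\in\widetilde{\mathcal M}_{r,e}^s$ with reduced spectral curve $\widetilde C_a$; by the explicit form of \eqref{eq:regular} the restriction of $\overline P_{d,e}$ to $\pi_2^{-1}(E_2,\phi_2)$ agrees, up to a fixed twist determined by $(E_2,\phi_2)$ and the normalization at $c_0$, with Arinkin's Poincar\'e sheaf on the compactified Jacobian of $\widetilde C_a$. By \cite{Ar2} for integral $\widetilde C_a$ and its extension \cite[\S4]{MSY} for the remaining reduced curves, this fiberwise sheaf is maximal Cohen-Macaulay, with Hilbert polynomial constant along $\widetilde{\mathcal M}_{r,e}^s$. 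Flatness of $\overline P_{d,e}$ over $\pi_2$ then follows from the local flatness criterion, and fiberwise max CM together with max CM of $\widetilde{\mathcal M}_{r,e}^s$ upgrades $\overline P_{d,e}$ to max CM on the total space. The main obstacle lies in this final step: identifying the fiberwise restriction of the $j_{\ast}$-extension precisely with Arinkin's sheaf on each (possibly reducible) reduced spectral curve requires the careful base-change analysis developed in \cite[\S4]{MSY} and \cite[\S3]{Li}, particularly in verifying commutativity of pushforward with restriction through the codimension $\ge 2$ locus.
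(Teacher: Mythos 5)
There is a genuine gap, and it sits exactly where you flag ``the main obstacle'': your strategy is to define $\overline P_{d,e}$ as $j_{\ast}P_{d,e}$, observe this is reflexive, and then upgrade to maximal Cohen--Macaulay and flat by restricting to fibers of $\pi_2$ and invoking Arinkin fiberwise. But the fiberwise step requires knowing that $(j_{\ast}P_{d,e})|_{\pi_2^{-1}(E_2,\phi_2)}$ agrees with the pushforward of $P_{d,e}|_{\pi_2^{-1}(E_2,\phi_2)\cap U}$ from the open fiber, i.e.\ that $j_{\ast}$ commutes with base change across the codimension-$2$ locus. That base-change statement is essentially equivalent to the flatness and Cohen--Macaulayness you are trying to prove, so the argument as structured is circular. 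Moreover, the concrete mechanism you propose for flatness --- constancy of Hilbert polynomials plus the local criterion --- is not available here: the fibers of $\pi_2$ are the non-proper stacks $\mathcal M_{r,d}^s$, so there is no Hilbert polynomial to speak of, and \Cref{lem:CMprop} itself only asserts that a sheaf \emph{already known} to be maximal Cohen--Macaulay is recovered by $j_{\ast}$ from a codimension-$2$ open; it does not produce Cohen--Macaulayness of a pushforward.

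The paper (following Arinkin and Li) runs the logic in the opposite direction, and this reversal is the essential idea you are missing. One first constructs a sheaf $Q$ on $\Hilb_S^n\times\mathcal M_{r,d}^s$ directly from Haiman's isospectral Hilbert scheme, as in \Cref{eq:poincare1}; Haiman's results and Arinkin's arguments in \cite[\S5]{Ar2} show \emph{from the construction} that $Q$ is Cohen--Macaulay of the right codimension, flat over $\mathcal M_{r,d}^s$, and supported on $\Hilb^n_{\mathcal C/A}\times_A\mathcal M_{r,d}^s$ (flatness over $A$ is used to propagate the support statement from $A^{\el}$ to all of $A$). One then descends $Q$ along the fppf cover $\overline U\to\mathcal M_{r,d}^s$ coming from the open locus of divisors $Z$ with $H^1(I_Z^{\vee})=0$ and $I_Z^{\vee}$ stable, obtaining a globally defined maximal Cohen--Macaulay, $\pi_2$-flat twisted sheaf that restricts to \Cref{eq:regular} on the regular locus. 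Only at the very end is \Cref{lem:CMprop} invoked, in its legitimate direction, to identify this sheaf with $j_{\ast}P_{d,e}$ (and to extend the descent datum from $\overline U^{red}$ to $\overline U$). If you want to salvage your outline, you would need to replace your ``reflexive extension then fiberwise upgrade'' steps with this direct construction; the codimension estimate you sketch is then used only for the final identification, not to manufacture the Cohen--Macaulay extension.
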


\begin{proof}
    The argument is the same as in \cite[Propositions 3.2.2, 3.2.3]{Li}; in fact the statement follows immediately by pulling back the sheaf constructed there along the map $\mathcal M_{r,d}^s \rightarrow \Higgs$. We sketch the argument below. Let $\widetilde{\Hilb}_S^n$ be the isospectral Hilbert scheme constructed by Haiman in \cite{Hai} (c.f. \cite[\S3.2, 3.3]{Ar2}); this can be defined via the Cartesian product
    \[\begin{tikzcd}
        \widetilde{\Hilb_S^n} \arrow{d}{\sigma} \arrow{r}{\psi} & \Hilb_S^n \arrow[d] \\
        S^n \arrow[r] & \Sym^n(S).
    \end{tikzcd}\]
    Then consider the diagram
    \begin{equation}\label{arinkinsheaf}
        \begin{tikzcd}
            \Hilb_S^n\times \mathcal M_{r,d}^s \arrow{d}{p_1} & \widetilde{\Hilb_S^n}\times \mathcal M_{r,d}^s \arrow{l}[swap]{\psi\times \id} \arrow{r}{\sigma\times\id} & S^n\times  \mathcal M_{r,d} ^s & \widetilde C^n\times_{A}\mathcal M_{r,d}^s \arrow{l}[swap]{\iota^n\times\id}\\
            \Hilb_S^n
        \end{tikzcd}
    \end{equation}
    where $\iota:\widetilde C\hookrightarrow S\times \mathcal A$ is the embedding of the universal spectral curve into the surface $S=T^{\ast}C$, and $\widetilde C^n := \widetilde C\times_{A}\widetilde C\times\cdots\times_{ A}\widetilde C$. Set
    \begin{equation}\label{eq:poincare1}
        Q := ((\psi\times\id)_{\ast}(\sigma\times\id)^{\ast}(\iota^n\times\id)_{\ast}\mathcal (F^d)^{\boxtimes n})^{\sign}\otimes p_1^{\ast}\det(\mathcal O_{Z})^{-1},
    \end{equation}
    where $Z$ is the universal divisor for $\Hilb_S^n$. We make the following observations:
    \begin{enumerate}
        \item $Q$ is Cohen-Macaulay of codimension $n$, and flat over $\mathcal M_{r,d}^s$: this follows from the arguments of \cite[\S5]{Ar2}. 
        \item $Q$ is supported on $\Hilb_{\mathcal C/A}^n\times_A\mathcal M_{r,d}^s$: that this is true over $A^{\el}$ follows from \cite[\S5]{Ar2}. Since $Q$ is flat over $A$, this is enough to conclude the statement globally.  
    \end{enumerate}
    Now let $U\subset \Hilb_{\mathcal C/A}^n$ be the open subscheme corresponding to divisors $Z\subset C_a$ with $H^1(I_Z^{\vee}) = 0$ and $I_Z^{\vee}$ stable. Then there is an fppf cover
    \[U\xrightarrow{\varphi} \mathfrak M_{r,d}^s\]
    when $r\mid n-d$ and $n\gg0$, which induces by base change a cover
    \[\overline U \xrightarrow{\varphi}\mathcal M_{r,d}^s,\]
    where $\overline U\rightarrow U$ is a $\GG_m$-bundle. Let $\overline U^{red} = \overline U\times_AA^{red}$; then the arguments of \cite[Proposition 4.3]{Ar2} (c.f. \cite[\S4.2.2]{MRV2}) show that $Q_{\overline U^{red}}$ descends to a twisted sheaf $P_{d,e}$ on $\mathcal M^{s,red}_{r,d}\times_A\mathcal M^{s,red}_{r,e}$, which on the open $\mathcal M^{red,reg}_{r,d}\times_A\mathcal M^{s,red}_{r,e}$ agrees with \Cref{eq:regular}. Then since $\overline U\setminus \overline U^{red}$ has codimension at least $2$, we can extend the descent datum to all of $\overline U$ using \Cref{lem:CMprop}, and the result follows. 
\end{proof}

\subsection{\'Etale local structure}\label{section:etale}
Following \cite[\S4.3]{MSY}, we make some technical remarks on the local behavior of our Poincar\'e sheaves over the elliptic locus. These will be used later to reduce certain calculations on the stacks $\mathcal M_{r,d}^{\el}$ to ones on the moduli space $M_{r,d_0}^{\el}$. 

Fix an \'etale cover $U\rightarrow A^{\el}$ such that there is a section $U\hookrightarrow \widetilde C_U$; then $(M_{r,d}^{\el})_U$ can be identified as the fine moduli space for rank one torsion free sheaves on $\widetilde C_U$ normalized along $U$. For ease of notation let $\overline J^{d} = (M_{r,d}^{\el})_U, \overline{\mathcal J}^{d}= (\mathcal M_{r,d}^{\el})_U$, where $d = d+r(r-1)(g-1)$, and let $\mathcal F_{d,U}\in \Coh(\widetilde C_U\times_U\overline J^{d})$ be the universal sheaf for the relative compactified Jacobian of $\widetilde C_U$, normalized along $U$. 

As in \cite[Proposition 4.3]{MSY}, we have a map $\sigma_d:\overline{\mathcal J}^d\rightarrow\overline J^0$
defined by the sheaf
\[\mathcal G^d := \mathcal F^d\otimes p_{\widetilde C}^{\ast}\mathcal O_{\widetilde C}(-dU) \otimes p_{\overline{\mathcal J}^d}^{\ast}(\mathcal F^d\otimes p_{\widetilde C}^{\ast}\mathcal O_{\widetilde C}(-dU))|_{U\times_U\overline{\mathcal J}^d}^{\vee} \in \Coh(\widetilde C_U\times_U\overline{\mathcal J^d}).\]
This is normalized along $U$, and so defines a map $\sigma_d$ such that letting $\mathcal F \in \Coh(\widetilde C_U\times_U\overline J^0)$, we have $(\id\times\sigma_d)^{\ast}\mathcal F = \mathcal G^d$.
For notational convenience let
\[\mathcal L_d := (\mathcal F^d\otimes p_{\widetilde C}^{\ast}\mathcal O_{\widetilde C}(-dU))|_{U\times_U\overline{\mathcal J}^d}^{\vee}.\]
Then \cite[Proposition 4.3, Corollary 4.4]{MSY} and \Cref{lem:CMprop} imply:
\begin{fact}\label{localpoincare}
    The $\mathcal L^d$ are numerically trivial line bundles on $\overline{\mathcal J}^d$, and if $\overline P$ is the Poincare sheaf on $\overline J^0\times_U\overline J^0$, then we have:
    \[(\sigma_d\times_U\sigma_e)^{\ast}\overline P = \overline P_{d,e}^{\el} \otimes (L_d^{\otimes e}\boxtimes L_e^{\otimes d}).\]
    \qed
\end{fact}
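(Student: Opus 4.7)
The plan is to reduce the identity to an explicit comparison of line bundles on the open locus of smooth spectral curves, and then extend it to the entire elliptic locus via the maximal Cohen-Macaulay property through \Cref{lem:CMprop}.

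First, for the numerical triviality of $\mathcal L_d$: by definition $\mathcal L_d$ is the dual of the restriction of $\mathcal F^d \otimes p_{\widetilde C}^{\ast}\mathcal O_{\widetilde C}(-dU)$ to the section $U \times_U \overline{\mathcal J}^d \cong \overline{\mathcal J}^d$. On each fiber of $\overline{\mathcal J}^d \to U$, this restriction is the evaluation of a family of rank-one torsion-free sheaves of fiberwise degree $0$ at a fixed smooth point of the spectral curve, which is a line bundle classified by a translation-invariant class on the compactified Jacobian, hence numerically trivial. The global statement on $\overline{\mathcal J}^d$ then follows by flatness of the family over $U$.

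Second, for the main identity I would first verify it on the dense open where the spectral curves are smooth, so that both $\overline P$ and $\overline P_{d,e}^{\el}$ become line bundles given by the determinantal formula \Cref{eq:regular}. Writing $\mathcal G^d = \mathcal F^d \otimes p_{\widetilde C}^{\ast}\mathcal O_{\widetilde C}(-dU) \otimes p_{\overline{\mathcal J}^d}^{\ast}\mathcal L_d$ and using $(\id \times \sigma_d)^{\ast}\mathcal F = \mathcal G^d$, the pullback $(\sigma_d \times_U \sigma_e)^{\ast}\overline P$ expands via \Cref{eq:regular} into the corresponding determinants of $\mathcal G^d \boxtimes \mathcal G^e$, $p_{12}^{\ast}\mathcal G^d$, and $p_{13}^{\ast}\mathcal G^e$. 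Multiplicativity of $\det\R p_{23\ast}$ in short exact sequences, combined with the fact that $\mathcal F^e$ has fiberwise relative degree $e$ over $\overline{\mathcal J}^e$ on each spectral curve, produces the factor $\mathcal L_d^{\otimes e}$ out of the $p_{\overline{\mathcal J}^d}^{\ast}\mathcal L_d$ insertion; symmetrically $\mathcal L_e^{\otimes d}$ appears. The remaining contributions from $p_{\widetilde C}^{\ast}\mathcal O_{\widetilde C}(-dU)$ and from the second and third factors of \Cref{eq:regular} cancel, since they encode the same normalization already built into the construction of $\overline P_{d,e}^{\el}$.

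Third, to extend to all of $\overline{\mathcal J}^d \times_U \overline{\mathcal J}^e$, both sides of the identity are maximal Cohen-Macaulay sheaves---the right by its construction in \Cref{section:poincare}, the left because $\sigma_d \times_U \sigma_e$ is a flat morphism of $\mu_r$-gerbes over the corresponding base and $\overline P$ is itself MCM by \cite{Ar2}. Restricting further, if necessary, to the open inside the smooth-spectral locus where the universal sheaves remain locally free along $U$ gives a codimension-$\ge 2$ complement, so \Cref{lem:CMprop} promotes the equality of line bundles on this open to the equality of MCM sheaves on the full product.

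The main obstacle is the bookkeeping in step two: one must keep careful track of how each of the four determinantal factors in \Cref{eq:regular} transforms under $\sigma_d \times_U \sigma_e$ and confirm that, after applying multiplicativity of $\det$, the accumulated twists amount exactly to $\mathcal L_d^{\otimes e} \boxtimes \mathcal L_e^{\otimes d}$ with no residual correction terms. A secondary issue is verifying the codimension-two requirement in step three; if the singular elliptic locus turns out to be divisorial, one would have to check the identity at the generic points of the divisorial strata separately---for instance by working on the fppf cover $\overline U \to \mathcal M_{r,d}^s$ used in the construction of $\overline P_{d,e}$---before invoking \Cref{lem:CMprop}.
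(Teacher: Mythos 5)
Your overall architecture (verify the identity where both sides are determinantal line bundles, then extend by the maximal Cohen--Macaulay property via \Cref{lem:CMprop}) matches the intended argument; the paper itself does not write out a proof at all but simply deduces the Fact from \cite[Proposition 4.3, Corollary 4.4]{MSY} together with \Cref{lem:CMprop}, so your step two is essentially a re-derivation of the computation in [MSY] from \Cref{eq:regular}, and that part is fine: twisting $\mathcal F^d$ by $p_{\overline{\mathcal J}^d}^{\ast}\mathcal L_d$ changes each determinant of cohomology by $\mathcal L_d^{\chi}$, and the alternating combination in \Cref{eq:regular} leaves exactly the exponent $e$ (and symmetrically $d$), while the $\mathcal O_{\widetilde C}(-dU)$ twists cancel against the normalization.

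The genuine gap is in step three. You propose to verify the identity on the locus of \emph{smooth} spectral curves and then invoke \Cref{lem:CMprop}, but the complement of $A^{sm}$ in $A^{\el}$ is a divisor, not codimension $\ge 2$ --- the paper itself computes in \Cref{sharpcodim} that the locus of singular spectral curves is an irreducible divisor of degree $(3r^2-2)(2g-2)$ in the compactified linear system. So the hypothesis of \Cref{lem:CMprop} fails for the open you chose, and the issue you relegate to a ``secondary'' caveat is in fact the one that breaks the argument as written. The fix is to run your step-two computation not over $A^{sm}$ but over the open $\mathcal M_{r,d}^{reg,\el}\times_{A}\mathcal M_{r,e}^{\el}\cup \mathcal M_{r,d}^{\el}\times_{A}\mathcal M_{r,e}^{reg,\el}$, i.e.\ where at least one factor is a line bundle on the (possibly singular but integral) spectral curve: the determinantal formula \Cref{eq:regular} is defined there, $\sigma_d$ and $\sigma_e$ restrict to this locus, and it is precisely this open whose complement in the full product over $A^{\el}$ has codimension $\ge 2$ by Arinkin's estimates, so that \Cref{lem:CMprop} applies and characterizes $\overline P$ as the pushforward from it. With that substitution your argument goes through; also note that your justification of the numerical triviality of $\mathcal L_d$ (translation-invariance of the restriction of the normalized universal sheaf along the section $U$) is only a sketch of what \cite[Proposition 4.3]{MSY} actually proves, so if you want the argument self-contained you should spell out why the restriction to $U\times_U\overline{\mathcal J}^d$ lies in the numerically trivial part of the Picard group of the compactified Jacobian rather than merely asserting it.
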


\noindent The sheaf $\mathcal F^d|_{U\times\overline{\mathcal J}^{d}}$ is a twisted line bundle on $\overline{\mathcal J}^{d}$. In particular, after restriction to a point $u\in U$, the gerbe trivializes, i.e. there is an isomorphism 
\[\overline{\mathcal J}^{d}_u \xrightarrow{\sim} \overline J^{d}_u \times B\mu_r.\]
Moreover, the section $U_u$ induces isomorphisms $\overline J^d_u\rightarrow\overline J^0_u$. In particular we may choose an \'etale cover
\[\overline J^{0}_u \xrightarrow{q_d} \overline{\mathcal J}^{d}_u, \]
so that $q_d^{\ast}$ induces isomorphisms
\[D^b_{\QCoh}(\overline{\mathcal J}^{d}_u)_w \xrightarrow[\sim]{q_d^{\ast}} D^b_{\QCoh}(\overline J^{0}_u)\]
for any weight $w$. Moreover, we may choose $q_d$ such that $\sigma_d\circ q_d = \id_{\overline J^0_u}$. 
\begin{lemma}\label{fiberwise}
    Let $\overline P = \overline P_{0,0}$ be the Arinkin kernel on $\overline J^0_u\times\overline J^0_u$, and let $\Phi_F$ denote the Fourier--Mukai transform with respect to $F$. Then we have
    \[(q_d^{\ast})^{-1}\circ \Phi_{\overline P_{0,0}\otimes (L_d^e\boxtimes L_e^d)}\circ q_e^{\ast} = \Phi_{\overline P_{d,e}},\]
    where $L_d, L_e$ are numerically trivial line bundles on $\overline J^0_u$. 
\end{lemma}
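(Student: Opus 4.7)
The plan is to reduce the claimed functor identity to an identity of Fourier--Mukai kernels on $\overline J^0_u \times \overline J^0_u$ that falls out of \Cref{localpoincare}, and then to transport the Fourier--Mukai integral across the gerbe-trivializing equivalences $q_d^{\ast}$ and $q_e^{\ast}$.

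First, I would pull back the identity $(\sigma_d \times \sigma_e)^{\ast}\overline P = \overline P_{d,e} \otimes (\mathcal L_d^{\otimes e}\boxtimes \mathcal L_e^{\otimes d})$ of \Cref{localpoincare}, restricted to the fiber over $u$, along $q_d \times q_e$. Since $\sigma_d \circ q_d = \id_{\overline J^0_u}$ and $\sigma_e \circ q_e = \id_{\overline J^0_u}$ by construction of the covers, the left-hand side pulls back to $\overline P_{0,0}$, yielding
\[\overline P_{0,0} = (q_d \times q_e)^{\ast}\overline P_{d,e} \otimes \left((q_d^{\ast}\mathcal L_d)^{\otimes e} \boxtimes (q_e^{\ast}\mathcal L_e)^{\otimes d}\right)\]
on $\overline J^0_u \times \overline J^0_u$. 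Setting $L_d := (q_d^{\ast}\mathcal L_d)^{\vee}$ and $L_e := (q_e^{\ast}\mathcal L_e)^{\vee}$, which remain numerically trivial line bundles on $\overline J^0_u$, this rearranges to the kernel identity
\[(q_d \times q_e)^{\ast}\overline P_{d,e} = \overline P_{0,0} \otimes \left(L_d^{\otimes e} \boxtimes L_e^{\otimes d}\right).\]

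Second, I would translate this kernel identity into the stated identity of functors using the gerbe trivialization $\overline{\mathcal J}^d_u \cong \overline J^0_u \times B\mu_r$, under which $q_d$ is the inclusion of the trivializing section and $q_d^{\ast}$ restricts to an equivalence $D^b_{\QCoh}(\overline{\mathcal J}^d_u)_w \xrightarrow{\sim} D^b_{\QCoh}(\overline J^0_u)$ for each weight $w$; similarly for $q_e$. The kernel $\overline P_{d,e}$ has pure bi-weight $(e,d)$, so under the induced trivialization of $\overline{\mathcal J}^d_u \times_u \overline{\mathcal J}^e_u$ it is the external tensor product of $(q_d \times q_e)^{\ast}\overline P_{d,e}$ with the bi-character line bundle of weight $(e,d)$ on $B\mu_r \times B\mu_r$. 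Since pushforward along $B\mu_r \to \pt$ kills every nontrivial character, a direct computation of $\Phi_{\overline P_{d,e}}(A)$ for $A \in D^b_{\QCoh}(\overline{\mathcal J}^e_u)_{(-d)}$ reduces to the ordinary Fourier--Mukai transform on $\overline J^0_u \times \overline J^0_u$ with kernel $(q_d \times q_e)^{\ast}\overline P_{d,e}$, applied to $q_e^{\ast}A$, and then repackaged into weight $e$ on $\overline{\mathcal J}^d_u$ by $(q_d^{\ast})^{-1}$. Substituting the kernel identity from the first step then produces exactly $(q_d^{\ast})^{-1} \circ \Phi_{\overline P_{0,0} \otimes (L_d^{\otimes e} \boxtimes L_e^{\otimes d})} \circ q_e^{\ast}$, as claimed.

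The main obstacle I anticipate is the $\mu_r$-weight bookkeeping: one must confirm that the weight $e$ on the first factor of $\overline P_{d,e}$ cancels correctly against the weight $e$ inserted by $(q_d^{\ast})^{-1}$, and that the weight $d$ on the second factor is absorbed by $q_e^{\ast}$ acting on an input of weight $-d$. Once the weight compatibility is sorted out, the remainder is a routine flat base change argument together with the kernel identity from the first step.
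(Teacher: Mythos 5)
Your proof is correct and takes essentially the same route as the paper, whose entire argument is the one-line instruction to apply $(q_d\times q_e)^{\ast}$ to \Cref{localpoincare}; your first step, using $\sigma_d\circ q_d=\id$ and $\sigma_e\circ q_e=\id$ to extract the kernel identity $(q_d\times q_e)^{\ast}\overline P_{d,e}=\overline P_{0,0}\otimes(L_d^{\otimes e}\boxtimes L_e^{\otimes d})$, is exactly that computation. The second step --- converting the kernel identity into the functor identity via the $\mu_r$-weight bookkeeping on $B\mu_r\times B\mu_r$ --- is left implicit in the paper, and your account of it is accurate.
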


\begin{proof}
Apply $(q_d\times q_e)^{\ast}$ to \Cref{localpoincare}. 
\end{proof}

\subsection{Abel--Jacobi and theorem of the square}

Consider the Abel--Jacobi map
\[\widetilde C\xrightarrow{\AJ} M_{r,1+d_0}, \qquad (x\in \widetilde C_a)\mapsto \mathfrak m_x^{\vee}.\]
After pulling back along the gerbes $\mathcal M_{r, 1-r(r-1)(g-1)}$, we obtain maps
\[\AJ: \widetilde{\mathcal C}\rightarrow \mathcal M_{r, d_0+1}\]
of $\mu_r$-gerbes, where $\widetilde{\mathcal C}$ is a $\mu_r$-gerbe over $\widetilde C$ with structure map $\sigma:\widetilde{\mathcal C}\rightarrow\widetilde C$. 
% Why are these stable? e.g. using G_m-equivariance, it suffices to show the statement for the curve corresponding to the nilpotent cone (namely, all ideal sheaves lie in upward flow of Abel--Jacobi locus of this curve).
Using \cite[Proposition 4.6]{MSY} and \Cref{lem:CMprop}, we obtain:
\begin{fact}\label{AJ}
    We have
    \[(\AJ\times_{ A }\id_{\mathcal M})^{\ast}\overline P_{1,d} \simeq (\sigma\times_{ A }\id)^{\ast}\mathcal F^d \otimes p_{\widetilde{\mathcal C}}^{\ast}\mathcal N,\]
    where $\mathcal N$ is a line bundle given by a $\QQ$-divisor proportional to $D$. 
    \qed
\end{fact}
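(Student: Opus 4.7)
The plan is to combine \cite[Proposition 4.6]{MSY}, which gives the identification over the elliptic locus, with \Cref{lem:CMprop} to extend globally.

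First, restrict everything to $A^{\el}$. Under the \'etale local description of \Cref{section:etale}---taking an \'etale cover $U \to A^{\el}$ with a section $U \hookrightarrow \widetilde C_U$---both the Abel--Jacobi map (sending $x \mapsto \mathfrak m_x^\vee$) and the Poincar\'e sheaf $\overline P_{1,d}^{\el}$ (described by \Cref{localpoincare}) admit the explicit forms needed to apply \cite[Proposition 4.6]{MSY} directly. This yields a canonical isomorphism
\[(\AJ \times_A \id)^\ast \overline P_{1,d}|_{A^{\el}} \simeq (\sigma \times_A \id)^\ast \mathcal F^d \otimes p^\ast_{\widetilde{\mathcal C}} \mathcal N\]
on each chart, where $\mathcal N$ is the line bundle associated to the $\mathbb Q$-divisor proportional to $D$ constructed in loc.\ cit., and these local isomorphisms glue canonically by the naturality of the construction.

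Second, to extend the isomorphism to the full stack $\widetilde{\mathcal C}\times_A \widetilde{\mathcal M}_{r,d}^s$, apply \Cref{lem:CMprop}. The ambient stack is Gorenstein of pure dimension. Both sides of the desired equality are maximal Cohen-Macaulay: the left-hand side because $\overline P_{1,d}$ is MCM by construction in \Cref{section:poincare} and pullback along the finite flat Abel--Jacobi map preserves the MCM property; the right-hand side because the universal sheaf $\mathcal F^d$ is MCM and $\mathcal N$ is a line bundle. Since the defining $\mathbb Q$-divisor of $\mathcal N$ makes sense globally on $\widetilde{\mathcal C}$, the line bundle $\mathcal N$ is defined on the full stack, and the isomorphism constructed over $A^{\el}$ extends uniquely once we verify that the complement has codimension $\geq 2$.

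The main obstacle is precisely this codimension estimate in the product stack. While the non-elliptic locus in $A$ can have codimension one in borderline cases, the combined restrictions of pulling back along $\AJ$ (which factors through sheaves that are invertible at the marked point) and imposing generic regularity on the second factor should push the codimension up to at least two in $\widetilde{\mathcal C}\times_A \widetilde{\mathcal M}_{r,d}^s$. Once this codimension bound is secured, \Cref{lem:CMprop} completes the proof, and the remaining bookkeeping amounts only to matching the normalization conventions between this paper and \cite{MSY} so that the line bundle $\mathcal N$ on either side agrees.
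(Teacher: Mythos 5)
Your proposal matches the paper's derivation of \Cref{AJ} exactly: the statement is obtained by citing \cite[Proposition 4.6]{MSY} for the identification over the elliptic locus and then extending across a codimension-$\ge 2$ complement via \Cref{lem:CMprop}, which is precisely your two-step argument. Two small cautions: the map $\AJ$ is not finite flat (it maps the universal curve into a much higher-dimensional moduli stack), so the Cohen--Macaulay property of the left-hand side should be taken from the cited construction rather than from flat pullback; and the codimension estimate you worry about concerns the non-regular locus inside $\widetilde{\mathcal C}\times_{A}\mathcal M^{\el}_{r,d}$ rather than extending past $A^{\el}$, since the fact is only ever invoked over the elliptic (indeed the $\sharp$) locus.
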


Let $M^{reg,\el}_{r,d}\subset M_{r,d}^s$ be the open locus of Higgs bundles with everywhere regular Higgs field; under the spectral correspondence these correspond to line bundles on the spectral curves. Consider the multiplication maps
\[M^{reg,\el}_{r,d_1}\times_A M_{r,d_2}^{\el} \xrightarrow{\mu} M_{r, d_1\circ d_2}^{\el},\]
where $d_1\circ d_2 := d_1+d_2 - r(r-1)(g-1)$; these induces multiplications on the gerbes
\[\mathcal M^{reg,\el}_{r,d_1}\times_A \mathcal M_{r,d_2}^{\el} \xrightarrow{\mu} \mathcal M_{r, d_1\circ d_2}^{\el}.\]
Note that we can view the latter $\mu$ as defined by the product of universal sheaves
\begin{equation}\label{eq:mu}
    \mathcal F^{d_1}\boxtimes\mathcal F^{d_2} \in \Coh(\widetilde C\times_A \mathcal M_{r,d_1}^{reg,\el}\times_A\mathcal M_{r,d_2}^{\el}),
\end{equation}
with the normalizations on $D$ induced by the ones for $\mathcal F^{d_1}$ and $\mathcal F^{d_2}$. 
Then we have: 

\begin{proposition}\label{square}
    Consider the maps
    \[\begin{tikzcd}
        \mathcal M_{r,d_1}^{reg,\el}\times_{ A }\mathcal M_{r,e}^{\el} \arrow[leftarrow]{r}{p_{13}} & \mathcal M_{r,d_1}^{reg,\el}\times_{ A }\mathcal M_{r,d_2}^{\el}\times_{ A }\mathcal M_{r,e}^{\el}  \arrow{r}{p_{23}}  \arrow{d}{\mu\times\id} & \mathcal M_{r,d_1}^{\el}\times_{ A } \mathcal M_{r,e}^{\el}\\
        & \mathcal M_{r,d_1\circ d_2}^s \times_{ A }\mathcal M_{r,e}^s. 
    \end{tikzcd}\]
    Then 
    \begin{equation}
        (\mu\times\id)^{\ast}\overline P_{d_1\circ d_2, e} \cong p_{13}^{\ast}\overline P_{d_1, e} \otimes p_{23}^{\ast}\overline P_{d_2, e}. \label{eq:square}
    \end{equation}
\end{proposition}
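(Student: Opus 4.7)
The plan is to verify the identity (\ref{eq:square}) on a dense open subset using the explicit formula (\ref{eq:regular}), and then extend to the entire ambient stack via \Cref{lem:CMprop}.

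Set $X := \mathcal M_{r,d_1}^{reg,\el}\times_{ A }\mathcal M_{r,d_2}^{\el}\times_{ A }\mathcal M_{r,e}^{\el}$ and let $X_0\subset X$ be the open subset where at least one of $\mathcal F^{d_2}$ or $\mathcal F^e$ corresponds to a line bundle on the spectral curve. Since line bundles are dense in the compactified Jacobian of an integral curve, the non-regular locus in $\mathcal M_{r,d}^{\el}$ has codimension $\ge 1$; consequently the complement
\[X\setminus X_0 = \mathcal M_{r,d_1}^{reg,\el}\times_{ A }(\mathcal M_{r,d_2}^{\el}\setminus\mathcal M_{r,d_2}^{reg,\el})\times_{ A }(\mathcal M_{r,e}^{\el}\setminus\mathcal M_{r,e}^{reg,\el})\]
has codimension $\ge 2$ in $X$. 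On $X_0$, the explicit formula (\ref{eq:regular}) applies to each Poincar\'e sheaf appearing in (\ref{eq:square}) after pullback: $\mathcal F^{d_1}$ is everywhere regular on $X$, at least one of $\mathcal F^{d_2}, \mathcal F^e$ is regular on $X_0$, and by (\ref{eq:mu}) the pulled-back argument $\mu(\mathcal F^{d_1}, \mathcal F^{d_2}) = \mathcal F^{d_1}\otimes\mathcal F^{d_2}$ is regular whenever $\mathcal F^{d_2}$ is.

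Substituting (\ref{eq:regular}) into both sides of (\ref{eq:square}) and introducing the Deligne pairing $\langle L, M\rangle := \det\R p_{\ast}(L\otimes M)\otimes \det\R p_{\ast}(L)^{-1}\otimes \det\R p_{\ast}(M)^{-1}\otimes \det\R p_{\ast}(\mathcal O_{\widetilde C})$, one checks that the desired identity reduces on $X_0$ to the biadditivity
\[\langle \mathcal F^{d_1}\otimes\mathcal F^{d_2}, \mathcal F^e\rangle \cong \langle \mathcal F^{d_1}, \mathcal F^e\rangle \otimes \langle \mathcal F^{d_2}, \mathcal F^e\rangle,\]
which is an instance of the theorem of the cube for the line bundle $\mathcal F^{d_1}$ paired against $\mathcal F^{d_2}$ and $\mathcal F^e$. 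This holds on $X_0$ because $\mathcal F^{d_1}$ is a line bundle on the whole of $X$, and biadditivity of the Deligne pairing in the line-bundle variable extends from pairs of line bundles to arbitrary flat coherent sheaves by d\'evissage using the additivity of $\det\R p_{\ast}$ in distinguished triangles.

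Finally, both sides of (\ref{eq:square}) are maximal Cohen--Macaulay sheaves on $X$. The left-hand side is the pullback of the CM sheaf $\overline P_{d_1\circ d_2, e}$ along the flat morphism $\mu\times\id$; here $\mu$ is smooth because it factors as the isomorphism $(\mathcal L,\mathcal F)\mapsto (\mathcal L,\mathcal L\otimes\mathcal F)$ followed by a projection. The right-hand side is the tensor product of the line bundle $p_{13}^{\ast}\overline P_{d_1,e}$ (pulled back from the everywhere-regular locus in the first factor) with the flat pullback $p_{23}^{\ast}\overline P_{d_2,e}$ of a CM sheaf. Since $X$ is Gorenstein of pure dimension, \Cref{lem:CMprop} extends the isomorphism from $X_0$ uniquely to all of $X$, concluding the proof. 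The main technical obstacle is the codimension-two estimate for $X\setminus X_0$, which relies on the density of line bundles in each fiber of the compactified Jacobian over $A^{\el}$; without this geometric input the direct determinant calculation on the regular locus would not extend past codimension one.
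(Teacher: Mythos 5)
Your overall architecture is sound and genuinely different from the paper's: you verify \Cref{eq:square} on the open $X_0$ where the determinant formula \Cref{eq:regular} computes all three sheaves, and then extend across the codimension-$\ge 2$ complement using \Cref{lem:CMprop}. The codimension estimate is correct (fiberwise over $A^{\el}$ the non-locally-free locus of each compactified Jacobian has codimension $\ge 1$, so the fiber product of two such loci over $A$ has codimension $\ge 2$), the flatness/smoothness of $\mu$ and hence the maximal Cohen--Macaulayness of both sides on all of $X$ is correct, and this is also how the paper finishes. The paper instead establishes the identity on $X_0$ by a three-stage seesaw: a fiberwise reduction to Arinkin's degree-zero statement via the normalization maps $\sigma_d$ of \Cref{localpoincare}, a restriction along the Abel--Jacobi map, and an induction on $d_2$ that pins down the residual line bundle pulled back from the first factor.

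The genuine gap is in your central step, the ``biadditivity of the Deligne pairing in the line-bundle variable for arbitrary flat coherent coefficients by d\'evissage.'' On $X_0$ you only know that \emph{one} of $\mathcal F^{d_2},\mathcal F^e$ is a line bundle, so after expanding \Cref{eq:regular} you must trivialize the cube expression $\Theta(L_1,L_2,M)$ with two line-bundle arguments and one argument $M$ that is a torsion-free rank-one sheaf which is \emph{not} locally free on a singular spectral curve. (You cannot retreat to the locus where all three are line bundles: its complement has codimension only $1$, so \Cref{lem:CMprop} would not apply.) The proposed d\'evissage does not run: such an $M$ has infinite projective dimension over $\mathcal O_{\widetilde C_a}$, so there is no finite resolution by line or vector bundles on the curve through which to propagate the theorem of the cube; resolving instead on the ambient surface $T^\ast C$ introduces nonzero $\mathrm{Tor}$ corrections when tensoring with line bundles supported on $\widetilde C$, and even granting a local computation, the resulting trivializations of the line bundle $\Theta$ are only Zariski-local on the base and must still be glued (every line bundle is locally trivial, so local triviality proves nothing). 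This missing step is precisely the hard content of the proposition --- it is what [Ar2, Proposition 6.4] establishes by the Abel--Jacobi/seesaw method that the paper imports --- so as written the proof does not close without either invoking that result or supplying a genuine intersection-bundle argument with coherent coefficients.
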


\begin{proof}
    First, since all sheaves are maximal Cohen-Macaulay, it suffices to show the statement after restriction to $A^{\el}$, using \Cref{lem:CMprop}. 
    
    \emph{Step 1}: We prove the statement over each fiber of $a\in A$. We achieve this by using \Cref{localpoincare} to reduce the statement to the degree $0$ case, which is proven in \cite[Proposition 6.4]{Ar2}. First, we note that for this it suffices to pass to an \'etale cover of the $A^{\el}$ as in \Cref{section:etale}. Under the notations of \Cref{section:etale}, we note that the commutativity of the following diagram follows from definitions:
    \[\begin{tikzcd}
        \mathcal J^d \times \overline{\mathcal J}^e \arrow{r}{\mu\times\id} \arrow{d}{\sigma_d\times\sigma_e} & \overline{\mathcal J}^{d\circ e} \arrow{d}{\sigma_{d\circ e}} \\
         J^0\times\overline J^0 \arrow{r}{\mu} & \overline J^0.
    \end{tikzcd}\]
    Then the following three equations are immediate from \Cref{localpoincare} and \Cref{eq:mu}:
    \begin{align*}
        (\mu\times\id)^{\ast}\overline P_{d_1\circ d_2, e} &= (\sigma_d\times\sigma_e)^{\ast}(\mu\times\id)^{\ast}\overline P_{0,0} \otimes \mu^{\ast}p_{\overline{\mathcal J}^{d\circ e}}^{\ast}L_{d_1\circ d_2} \\
        p_{13}^{\ast}P_{d_1,e} \otimes p_{23}^{\ast}\overline P_{d_2} &= (\sigma_d\times\sigma_e)^{\ast}(p_{13}^{\ast} P_{0,0}\otimes p_{23}^{\ast}\overline P_{0,0}) \otimes p_{\mathcal J^{d_1}}^{\ast}L_{d_1} \otimes p_{\overline{\mathcal J}^{d_e}}^{\ast}L_{d_2} \\
        \mu^{\ast} L_{d_1\circ d_2} &= p_{\mathcal J^{d_1}}^{\ast}L_{d_1}\circ p_{\overline{\mathcal J}^{d_2}}^{\ast}L_{d_2}
    \end{align*}
    Then the fiberwise statement reduces to the corresponding statement for $\overline P_{0,0}$, which is proven in \cite[Proposition 6.4]{Ar2}.

    \emph{Step 2}: We prove the statement after restriction along $\AJ$; this step is analogous to \cite[Proposition 6.3]{Ar2}. Namely, let $\nu$ denote the composition:
    \[\nu:\mathcal M_{r,d}^{reg,\el}\times_A\widetilde{\mathcal C} \xrightarrow{\id\times\AJ} \mathcal M_{r,d}^{reg,\el}\times_A\mathcal M_{r,d_0+1}^{\el} \xrightarrow{\mu} \mathcal M_{r,d+1}^{\el}.\]
    We show that
    \[(\nu\times\id)^{\ast} \overline P_{d+1, e} = p_{13}^{\ast}P_{d,e}\otimes p_{23}^{\ast}\mathcal F^{1}.\]
    This follows essentially from \Cref{eq:regular}. Namely, we have essentially from definitions that
    \[(\id\times\AJ)^{\ast} \mathcal F^1 = (\id\times\sigma)^{\ast}I_{\Delta} \otimes p_{\mathcal C }^\ast\mathcal N,\]
    where $\mathcal N$ is a line bundle on $\mathcal C$. A simple computation using this and \Cref{eq:mu} shows the result. 

    \emph{Step 3}: Step 1 shows us that the two sides of \Cref{eq:square} agree up to a line bundle pulled back along the projection
    \[\mathcal M_{r,d_1}^{reg,\el} \times_A \mathcal M_{r,d_2}^{\el} \times_A\mathcal M_{r,e}^{\el} \xrightarrow{p_{1}} \mathcal M_{r,d_1}^{reg,\el}.\]
    If $d_2 = d_0$, then $M_{r,d_2}\rightarrow A$ has a zero section, and \Cref{eq:regular} shows that the restriction of both sides of \Cref{eq:square} along this section are trivial, hence the result. If $d_2 = d_0+1$, then step 2 shows similarly that the two sheaves agree. 

    \emph{Step 4}: Induct on $d_2$. In particular, consider the diagram:
    \[\begin{tikzcd}
        \mathcal M_{r,d_1}^{reg,\el}\times_A \mathcal M_{r,d_2}^{reg,\el}\times_A \mathcal M_{r,d_0+1}^{\el} \times_A \mathcal M_{r,e}^{\el} \arrow{d}{\id\times\mu\times\id}\arrow{r}{\mu\times\id\times\id} & \mathcal M_{r,d_1+1}^{reg,\el}\times_A\mathcal M_{r,d_0+1}^{\el}\times_A\mathcal M_{r,e}^{\el} \arrow{d}{\mu\times\id} \\
        \mathcal M_{r,d_1}^{reg,\el}\times_A\mathcal M_{r,d_2+1}^{\el} \times_A \mathcal M_{r,e}^{\el} \arrow{r}{\mu\times\id} & \mathcal M_{r,d_1\circ d_2+1}^{\el}\times_A\mathcal M_{r,e}^{\el}.
    \end{tikzcd}\]
    Suppose the statement holds for $d_2$; then step 3 implies that
    \[(\id\times\mu\times\id)^{\ast}(\mu\times\id)\overline P_{d_1\circ d_2+1,e} \cong (\id\times\mu\times\id)^{\ast}(p_{13}^{\ast} P_{d_1,e}\otimes p_{23}^{\ast}\overline P_{d_2,e}).\]
    But its clear that for the map
    \[p_1:\mathcal M_{r,d_1}^{reg,\el}\times_A\mathcal M_{r,d_2}^{reg,\el}\times_A\mathcal M_{r,d_0+1}^{\el}\times_A\mathcal M_{r,e}^{\el} \rightarrow \mathcal M_{r,d_1}^{reg,\el},\]
    $p_1^{\ast}$ is injective on Picard groups (e.g. it's easy to check that the pushforward of the structure sheaf is the structure sheaf). Since as in step 3 we know that the two sheaves agree up to an element of $p_1^{\ast}\Pic(\mathcal M_{r,d_1}^{reg,\el})$, the result follows.
\end{proof}

\begin{remark}
    The above proof works exactly as stated in the more general setting of \cite[\S4]{MSY}, as long as $\Pic B$ is trivial. The general case works the exact same way, with a bit more book-keeping. 
\end{remark}

\begin{corollary}\label{dual}
    Let $\iota: \mathcal M_{r,d_0+k} \rightarrow \mathcal M_{r, d_0-k}$ be the map induced by $(\mathcal F_d)^{\vee}$. Then 
    \[\overline P_{d_0+k,e}|_{\mathcal M_{r,d_0+k}^{\el}\times_A\mathcal M_{r,e}^{\el}} = (\iota\times\id)^{\ast}\overline P_{d_0-k, e}^{\vee}|_{\mathcal M_{r,d_0-k}^{\el}\times_A\mathcal M_{r,e}^{\el}}\]
\end{corollary}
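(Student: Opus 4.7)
The plan is to deduce \Cref{dual} from the theorem of the square \Cref{square} via a graph construction, in direct analogy with the classical derivation of the inversion formula for the Poincar\'e bundle on an abelian variety from the see-saw principle. The two key ingredients will be (a) \Cref{square}, which controls how $\overline P$ behaves under multiplication in the compactified Jacobian, and (b) the observation that $\overline P_{d_0,e}$ is trivial along the zero section, which follows directly from the determinantal normalization in \Cref{eq:regular}.

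Concretely, I would apply \Cref{square} with $(d_1,d_2) = (d_0+k,\,d_0-k)$, taking $d_1$ in the regular locus. Under the conventions of \Cref{section:normalization}, one has $d_1\circ d_2 = d_0$, and the multiplication map
\[\mu\colon \mathcal M_{r,d_0+k}^{reg,\el}\times_A \mathcal M_{r,d_0-k}^{\el}\longrightarrow \mathcal M_{r,d_0}^{\el}\]
is well-defined, yielding
\[(\mu\times\id)^{\ast}\overline P_{d_0,e}\;\cong\;p_{13}^{\ast}\overline P_{d_0+k,e}\otimes p_{23}^{\ast}\overline P_{d_0-k,e}\]
on $\mathcal M_{r,d_0+k}^{reg,\el}\times_A \mathcal M_{r,d_0-k}^{\el}\times_A \mathcal M_{r,e}^{\el}$. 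I would then pull back along the graph $(\id,\iota)\times\id$. Because $\iota$ restricts to inversion of line bundles on the spectral curve, $\mu\circ(\id,\iota)$ factors through the zero section $e\colon A^{\el}\hookrightarrow \mathcal M_{r,d_0}^{\el}$ classifying $\mathcal O_{\widetilde C}$. A direct check using \Cref{eq:regular} shows that $(e\times\id)^{\ast}\overline P_{d_0,e}$ is trivial: substituting $\mathcal F^{d_0} = \mathcal O_{\widetilde C}$ collapses the first factor to $\det\R p_{23\ast}(p_{13}^{\ast}\mathcal F^e)$ and the second to $\det\R p_{23\ast}(p_1^{\ast}\mathcal O_{\widetilde C})^{-1}$, which cancel respectively against the third and fourth factors. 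Rearranging gives
\[\overline P_{d_0+k,e}|_{\mathcal M_{r,d_0+k}^{reg,\el}\times_A\mathcal M_{r,e}^{\el}}\;\cong\;(\iota\times\id)^{\ast}\overline P_{d_0-k,e}^{\vee}|_{\mathcal M_{r,d_0+k}^{reg,\el}\times_A\mathcal M_{r,e}^{\el}}.\]

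To extend across the non-regular part of the elliptic locus, I would invoke \Cref{lem:CMprop}. Both sides are maximal Cohen--Macaulay on $\mathcal M_{r,d_0+k}^{\el}\times_A\mathcal M_{r,e}^{\el}$: for the left-hand side by construction, and for the right-hand side because $\iota$ is an isomorphism on the elliptic locus and the $\mathcal O$-linear dual preserves maximal Cohen--Macaulay sheaves on a Gorenstein ambient. It then suffices that the complement of the regular locus inside $\mathcal M_{r,d_0+k}^{\el}$ has codimension at least two, a standard estimate for relative compactified Jacobians of integral Gorenstein curves from \cite{Ar2}.

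The hard part will be the bookkeeping around the $\mu_r$-gerbe structure and the $U$-normalization of the universal sheaf: one must verify that the normalization of $\iota(\mathcal F)$ matches that of $\mathcal F^{\vee}$, so that $\mu\circ(\id,\iota)$ genuinely lands in the zero section on the level of stacks rather than differing from it by a twist pulled back from $A^{\el}$. Concretely, this amounts to checking that the line bundles $\mathcal L_d$ and $\mathcal L_{-d}$ appearing in \Cref{localpoincare} are mutual inverses under $\iota$, which I expect to be routine but tedious rather than genuinely obstructive.
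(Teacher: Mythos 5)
Your argument is correct and is essentially the paper's own proof: both apply \Cref{square} to the graph $(\id,\iota)$, observe that $\mu\circ(\id,\iota)$ factors through the zero section where $\overline P_{d_0,e}$ restricts trivially by \Cref{eq:regular}, and then extend from the regular locus to the full elliptic locus via \Cref{lem:CMprop}. The only difference is that you make explicit the cancellation in \Cref{eq:regular} and flag the gerbe/normalization bookkeeping, which the paper leaves implicit.
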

\begin{proof}
    It suffices to check over $\mathcal M_{r,d_0+k}^{reg,\el}\times_{A^{\el}}\mathcal M_{r,e}^{\el}$, by \Cref{lem:CMprop}. There is a commutative diagram
    \[\begin{tikzcd}
        \mathcal M_{r,d_0+k}^{reg,\el} \arrow{r}{(\id,\iota)} \arrow{dr}[swap]{h} & \mathcal M_{r,d_0+k}^{reg,\el} \times_A \mathcal M_{r,d_0-k}^{reg,\el} \arrow{r}{\mu} & \mathcal M_{r,d_0}^{reg, \el} \\
        & A \arrow{ur}[swap]{\mathcal O_{\widetilde C}} 
    \end{tikzcd}\]
    By \Cref{square}, pullback of $P_{d_0, e}$ along the top composition is $ P_{d_0+k,e}\otimes \iota^{\ast} P_{d_0-k, e}$. On the other hand, the restriction of $P_{d_0, e}$ to the zero section is just $\mathcal O_{\mathcal M_{r,e}}$, and thus the commutativity of the diagram shows that
    \[P_{d_0+k,e}\otimes \iota^{\ast}P_{d_0-k, e} = \mathcal O_{\mathcal M_{r,d_0+k}^{reg,\el}\times_A\mathcal M_{r,e}^{\el}},\]
    which implies the result. 
\end{proof}

Unlike in \cite{Li}, we do not attempt to extend $\overline P$ to a sheaf over the entire relative product. This is because $\overline P$ is enough for our purpose, by \Cref{flowsupport}

\section{Structure of upward flows}

For the reader's convenience we recall the following four statements, which are proved in \cite{HH}:

\begin{fact}[{\cite[Proposition 3.4]{HH}}]\label{hh34}
    A semistable Higgs bundle $(E,\phi)$ lies in the upward flow $W_{(\mathcal E,\Phi)}^+$ if and only if there is a filtration
    \[0 = E_0\subset E_1\subset\cdots \subset E_k = E\]
    by subbundles such that $\phi(E_i)\subset E_{i+1}\otimes K$ for all $i$, and the associated graded $(\gr(E), \gr(\phi)) = (\mathcal E, \Phi)$. Moreover, such a filtration is unique if it exists. \qed
\end{fact}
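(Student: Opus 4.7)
The plan is to use the Rees construction to identify filtered Higgs bundles with prescribed associated graded $(\mathcal E, \Phi)$ with $\GG_m$-equivariant families over $\mathbb A^1_t$ specializing to $(\mathcal E, \Phi)$ at $t = 0$, and then run this identification in reverse using the universal property of the moduli gerbe.

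For the ``if'' direction, given a filtration $E_\bullet$ with $\phi(E_i) \subset E_{i+1} \otimes K$, I would form the Rees sheaf
\[
    \mathcal R := \sum_p t^p E_p \subset E \otimes_{\mathcal O_C} \mathcal O_C[t, t^{-1}],
\]
a flat $\GG_m$-equivariant family of vector bundles on $C \times \mathbb A^1_t$ in which $t^p e$ (for $e \in E_p$) has weight $p$. The hypothesis $\phi(E_i) \subset E_{i+1} \otimes K$ is precisely what is needed for $t\phi$ to preserve $\mathcal R$, yielding a Higgs field $\Psi$ of weight $+1$ whose restriction at $t = 1$ is $(E, \phi)$, at $t = \lambda$ is $(E, \lambda\phi)$ via the identification $t^p e \mapsto \lambda^p e$, and at $t = 0$ is $(\gr E_\bullet, \gr \phi) = (\mathcal E, \Phi)$. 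This realizes $(E, \phi)$ as a point of $W^+_{(\mathcal E, \Phi)}$.

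For the ``only if'' direction, a point of $W^+_{(\mathcal E, \Phi)}$ produces a $\GG_m$-equivariant morphism $f: \mathbb A^1 \to M^s_{r,d}$ extending the orbit map, with $f(0) = (\mathcal E, \Phi)$. I would lift $f$ to $\mathcal M^s_{r,d}$ and pull back the universal Higgs bundle $(\EE^d, \Phi^d)$ to obtain a family $(\mathcal V, \Psi)$ on $C \times \mathbb A^1$. Since the scaling action preserves the normalization $\det \pi_\ast E|_{c_0} \simeq \mathcal O$, the $\GG_m$-action lifts canonically to $\mathcal M^s_{r,d}$ and linearizes $(\EE^d, \Phi^d)$ with $\EE^d$ in weight $0$ and $\Phi^d$ in weight $+1$. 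Consequently $\mathcal V$ is a graded locally free $\mathcal O_C[t]$-module, hence of the form $\mathcal R(E_\bullet)$ for a uniquely determined filtration $E_\bullet$ on $\mathcal V|_{t=1} \simeq E$; the weight-$1$ condition on $\Psi$ translates into $\phi(E_i) \subset E_{i+1} \otimes K$, and $\mathcal V|_{t=0}$ recovers $(\gr E_\bullet, \gr \phi) = (\mathcal E, \Phi)$.

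Uniqueness reduces to the fact that a $\GG_m$-equivariant extension of the orbit over $\mathbb A^1$ is unique by separatedness of $\mathcal M^s_{r,d}$, so two filtrations satisfying the conclusion produce isomorphic Rees families and must coincide. The main obstacle is the canonical $\GG_m$-linearization step in the ``only if'' direction: on the coarse moduli $M^s_{r,d}$ one lacks a universal Higgs bundle and would at best recover the limit family up to a twist by a line bundle on $\mathbb A^1$, losing the grading needed to extract the filtration. Passing to the gerbe $\mathcal M^s_{r,d}$ of \Cref{section:normalization} resolves this, since the universal sheaf there carries a canonical $\GG_m$-linearization.
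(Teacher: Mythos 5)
The paper does not actually prove this statement: it is quoted as a Fact directly from \cite[Proposition 3.4]{HH} (note the \qed with no argument), so there is no internal proof to compare against. Your Rees-construction argument is precisely the standard proof of that proposition (going back to Simpson's study of the $\GG_m$-action on the Dolbeault moduli space), and it is correct in outline: the ``if'' direction via the Rees family $\sum_p t^p E_p$ with $t\phi$ preserving it, the ``only if'' direction by linearizing the pulled-back universal family on $C\times\mathbb{A}^1$ and reading off the filtration from the grading (where your observation that the subbundle, rather than merely subsheaf, property of the $E_p$ follows from local freeness of the special fiber is exactly the right point), and uniqueness from separatedness. Two steps deserve one more word each: (i) the existence of the $\GG_m$-equivariant structure on $(\mathcal V,\Psi)$ over all of $\mathbb{A}^1$ --- over $\GG_m$ it comes for free from the orbit map, but extending it across $t=0$ uses simplicity (stability) of the limit $(\mathcal E,\Phi)$, and the linearization is only canonical up to an overall character of $\GG_m$, which harmlessly shifts the indexing of the filtration; and (ii) in the uniqueness step, the isomorphism of the two Rees families over $\GG_m$ extends over $t=0$ by separatedness and is automatically graded because equivariance is a closed condition holding on a dense open. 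Both points are standard and are handled the same way in loc.\ cit., so I would accept your argument as a faithful reconstruction of the cited proof rather than a genuinely different route.
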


\begin{fact}[{\cite[Proposition 4.6 and Proposition 5.18]{HH}}]\label{hh518}
    Keeping the notations of previous sections, the restriction of the Hitchin fibration $h:W_{\delta}^+\rightarrow A $ is finite flat of degree $\prod_{i=1}^{r-1}{r\choose i}^{m_i}$\qed
\end{fact}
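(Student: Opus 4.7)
The plan is to establish the statement by separately verifying finiteness, flatness, and the degree formula. The first two follow formally from the structure of $W_\delta^+$ already recorded earlier, so the substantive work is the degree computation.

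For finiteness, the Bialynicki--Birula decomposition combined with very stability of $(E_\delta,\phi_\delta)$ gives $W_\delta^+\cong\mathbb{A}^N$ with $N=\dim A$ (since $W_\delta^+$ is a Lagrangian subvariety of the $2\dim A$-dimensional symplectic variety $M_{r,d}$) and realizes $W_\delta^+$ as closed in $M_{r,d}$. Because $h:M_{r,d}\to A$ is proper, so is $h|_{W_\delta^+}$; a proper morphism between affine schemes is finite, and irreducibility of $W_\delta^+$ together with non-constancy of $h|_{W_\delta^+}$ gives surjectivity. Flatness then follows from miracle flatness: both source and target are regular of the same dimension and the map is finite, hence equidimensional.

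For the degree I would restrict to a generic $a\in A$ where the spectral curve $\pi:\widetilde C_a\to C$ is smooth and unramified over the support of $\delta$. By the spectral correspondence of \cite{BNR}, $h^{-1}(a)$ is identified with a component of $\operatorname{Pic}(\widetilde C_a)$, and a line bundle $L$ there lies in $W_\delta^+$ precisely when $\pi_\ast L$ admits a filtration as in \Cref{hh34}. Away from the support of $\delta$, the associated-graded map $b_i$ is an isomorphism and the filtration is rigid. At each point $c_{ij}\in\operatorname{supp}\delta_i$, however, $b_i$ vanishes; a local analysis of the compatibility $\phi(E_i)\subset E_{i+1}\otimes K_C$ against the eigenspace decomposition of $\phi|_{c_{ij}}$ on the $r$ sheets of $\widetilde C_a$ shows that the rank-$i$ step $E_i$ may be matched with any $i$-element subset of these sheets, contributing a local factor of $\binom{r}{i}$. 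Multiplying these local contributions over all $c_{ij}$ yields $\prod_{i=1}^{r-1}\binom{r}{i}^{m_i}$.

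The main obstacle is the local-to-global analysis at the points $c_{ij}$: one must translate the weak Higgs-compatibility condition into a combinatorial selection of $i$ sheets out of $r$, then verify that every such local choice in fact globalizes to a unique $L\in\operatorname{Pic}(\widetilde C_a)$ with $\pi_\ast L\in W_\delta^+$ (so that there is neither a global obstruction nor overcounting). This is precisely what \cite[Propositions 4.6 and 5.18]{HH} carry out via an auxiliary moduli of flagged spectral data identified with $W_\delta^+$, reducing the degree to a transparent count on the spectral curve.
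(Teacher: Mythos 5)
The paper does not actually prove this statement: it is imported as a Fact from \cite{HH} (Propositions 4.6 and 5.18), with the \qed placed immediately after the statement, so there is no in-paper argument to compare against. Judged on its own terms, your treatment of finiteness and flatness is correct and essentially complete: very stability makes $W_\delta^+$ closed in $M_{r,d}$ and the Bialynicki--Birula theory identifies it with an affine space of dimension $\dim A$ (it is Lagrangian); a morphism of affine varieties that is proper is finite; a finite morphism preserves dimension, so the image is all of the irreducible affine space $A$; and miracle flatness applies since source and target are regular of equal dimension and the fibers are zero-dimensional.

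The degree computation is where the substance lies, and there your argument is a sketch with the decisive step deferred. Two concrete points. First, to pass from ``the generic fiber has $\prod_i\binom{r}{i}^{m_i}$ points'' to ``the degree is $\prod_i\binom{r}{i}^{m_i}$'' you need the generic fiber to be reduced; this holds by generic smoothness in characteristic zero for a finite flat surjection of smooth varieties, but it should be stated. Second, the claim that at each $c_{ij}$ the filtration datum is exactly a free choice of an $i$-element subset of the $r$ sheets --- with no further local constraint, no global obstruction to realizing an arbitrary tuple of such choices, and no two tuples yielding isomorphic Higgs bundles --- is precisely the hard content, and you defer it to \cite{HH}; since the statement being proved \emph{is} the cited result, that deferral is circular as written. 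The present paper carries out exactly this fiber analysis in \Cref{sharpstructure} (an Euler-characteristic count forcing each successive quotient $V_{i+1}/V_i$ to be reduced of length $m_i(r-i)$, plus a Hecke-transform induction realizing every tuple), but note that \Cref{sharpstructure} itself invokes \Cref{hh518} for the count, so the argument would need rearranging to be non-circular. It is also worth knowing that \cite[Proposition~4.6]{HH} obtains the degree by a genuinely different route --- as the ratio of the products of the $\GG_m$-weights on $A$ and on $W_\delta^+$ --- which avoids the fiberwise spectral-curve analysis entirely and is probably the cleanest way to make your outline rigorous.
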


\begin{definition}
    Let $(E,\phi)$ be a Higgs bundle on $C$ and $V\subset E_c$ be a $\phi_c$-invariant subspace of the fiber $E_c$. The \emph{Hecke transform of $(E, \phi)$ at $V\subset E_c$}, denoted $\mathcal H_V(E,\phi)$, is the unique Higgs bundle $(E',\phi')$ making the following diagram commute:
    \[\begin{tikzcd}
        0 \arrow[r]&  E' \arrow[r] \arrow{d}{\phi'} & E \arrow{d}{\phi}\arrow[r] & E_c/V \arrow[r] \arrow[d] & 0 \\
        0\arrow[r] & E'\otimes K \arrow[r] & E\otimes K \arrow[r] & (E_c/V)\otimes K \arrow[r] & 0.
    \end{tikzcd}\]
\end{definition}

\begin{fact}[{\cite[Proposition 4.15]{HH}}]\label{hh415}
    Let $(E,\phi)$ be a Higgs bundle carrying a full filtration by subbundles
    \[0 = E_0 \subsetneq E_1\subsetneq\cdots\subsetneq E_r = E\]
    such that $\phi(E_i)\subset E_{i+1}\otimes K_C$. Let $c \in C$ and $V\subset E_c$ be a $\phi_c$-invariant subspace of dimension $k$. Then the Hecke transform $\mathcal H_V(E,\phi) = (E',\phi')$ has a full filtration
    \[0 = E_0'\subsetneq\cdots\subsetneq E_r' = E'\]
    such that $\phi'(E_i')\subset E_{i+1}'\otimes K_C$, and
    \[E_{i+1}'/E_i' = \begin{cases} (E_{i+1}/E_i)(-c) & i<n-k\\ (E_{i+1}/E_i) & i\ge n-k.\end{cases}\]
    Moreover the induced map $b_i':(E_i'/E_{i-1}')\rightarrow (E_{i+1}'/E_i')\otimes K$ is the same as the map $b_i:(E_i/E_{i_1})\rightarrow (E_{i+1}/E_i)$ unless $i=k$, in which case $b_i' = b_is_c$, where $s_c$ is the section of $\mathcal O(c)$. \qed
\end{fact}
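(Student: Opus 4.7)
The plan is a snake-lemma computation on the Hecke modification.

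First, I would check that $\phi' := \phi|_{E'}$ is a well-defined Higgs field: since $V \subset E_c$ is $\phi_c$-invariant, the map $\phi: E \to E \otimes K_C$ descends along $E \twoheadrightarrow E_c/V$ to a map $E_c/V \to (E_c/V) \otimes K_c$, which forces $\phi$ to preserve the kernel $E' = \ker(E \to E_c/V)$.

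Next I would define the filtration by $E_i' := E_i \cap E'$, fitting into short exact sequences $0 \to E_i' \to E_i \to (E_i)_c / V_i \to 0$, where $V_i := V \cap (E_i)_c$. The $\phi_c$-invariance of $V$ makes $V_i$ a compatible filtration of $V$, and the natural map $(E_i)_c/V_i \hookrightarrow (E_{i+1})_c/V_{i+1}$ is injective because $V_{i+1} \cap (E_i)_c = V_i$. The comparison of these exact sequences at indices $i$ and $i+1$ thus consists of three injective vertical maps, so the snake lemma produces a short exact sequence on cokernels:
\[0 \to E_{i+1}'/E_i' \to E_{i+1}/E_i \to (E_{i+1}/E_i)_c / (V_{i+1}/V_i) \to 0.\]
The right-hand term is either $0$ or the skyscraper $\mathcal{O}_c$, depending on whether $\dim V_{i+1}/V_i$ equals $1$ or $0$, yielding $E_{i+1}'/E_i'$ equal to $E_{i+1}/E_i$ or $(E_{i+1}/E_i)(-c)$ respectively. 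In the generically regular situation of interest to the paper, $\phi_c$ is regular nilpotent, so $V$ must coincide with one of the filtration fibres; a direct case analysis then recovers the stated pattern of twists.

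For the induced maps $b_i'$, the graded piece is determined by $\phi'$. Away from the ``switching'' index where the twist pattern changes, the inclusions $E_j'/E_{j-1}' \hookrightarrow E_j/E_{j-1}$ at source and target are of the same type (both identities, or both multiplication by $s_c$), under which $b_i'$ is identified with $b_i$. At the switching index, exactly one of these inclusions is multiplication by $s_c$ while the other is the identity, producing the extra factor $s_c$ in $b_i' = b_i s_c$. This is a short diagram chase from the compatibility of $\phi'$ with the filtration.

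The main obstacle I foresee is careful bookkeeping: tracking the direction of the filtration, identifying which $k$-dimensional $\phi_c$-invariant subspace $V$ corresponds to, and the sign of the twist at the boundary. I do not expect deeper technical difficulty, since everything reduces to diagram chasing once the filtration $E_i'$ is set up cleanly. A secondary subtlety is the non-generic case where $\phi_c$ has multiple Jordan blocks and $V$ is not pinned down by its dimension; but the statement is implicitly applied in the regular range arising from upward flows, where this ambiguity does not occur.
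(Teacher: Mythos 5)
The paper does not actually prove this Fact --- it is imported verbatim from \cite[Proposition 4.15]{HH} and stamped with a \qed{} --- so your argument has to stand on its own. Its skeleton is the right one and is surely how the result is proved: $\phi$ preserves $E'=\ker(E\to E_c/V)$ because $V$ is $\phi_c$-invariant; the filtration $E_i':=E_i\cap E'$ sits in $0\to E_i'\to E_i\to (E_i)_c/V_i\to 0$ with $V_i:=V\cap(E_i)_c$; and your snake-lemma sequence correctly reduces the twist pattern to computing $\dim(V_{i+1}/V_i)$ for each $i$. The discussion of the $b_i'$ at the single ``mixed'' index is also fine (modulo an indexing slip already present in the quoted statement: the switch occurs at $i=n-k$, not $i=k$, which is what the application in \Cref{sharpstructure} actually uses, where $\dim V_I=n-i$ and it is $b_i$ that acquires the zero at $c$).

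The genuine gap is the step you wave at: determining the jumps $\dim(V_{i+1}/V_i)$. Your justification --- ``$\phi_c$ is regular nilpotent, so $V$ must coincide with one of the filtration fibres'' --- misreads the situation twice. In the paper's application the Hecke point $c$ avoids the ramification locus of $\pi_a$, so $\phi_c$ is regular \emph{semisimple} with $r$ distinct eigenvalues, and $V=V_I$ is spanned by an arbitrary $k$-element subset of eigenvectors; there are $\binom{r}{k}$ such $V$ and none is a filtration fibre (indeed $(E_i)_c$ is generally not $\phi_c$-invariant, since $\phi_c$ shifts the filtration by one). Even at the nilpotent fixed point, the unique $k$-dimensional invariant subspace is $\ker\phi_c^k$, which is a complement to $(E_{r-k})_c$, not equal to $(E_k)_c$. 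What you actually need --- and what is implicit in the Fact, which is false without it (take $\phi=0$ and $V=(E_1)_c$) --- is that the induced maps $b_i$ on graded pieces are nonvanishing at $c$. Under that hypothesis $(E_1)_c$ is a cyclic vector for $\phi_c$, and if $w\in V\cap(E_{i+1})_c\setminus(E_i)_c$ then $\phi_c w\in V\cap(E_{i+2})_c\setminus(E_{i+1})_c$ because $b_{i+1}$ is an isomorphism of one-dimensional spaces at $c$; hence once $\dim V_j$ jumps it jumps at every subsequent step, which together with $\dim(V_{j}/V_{j-1})\le 1$ forces $V\cap(E_{r-k})_c=0$ and $\dim V_{r-k+j}=j$. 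Feeding this into your snake-lemma sequence gives exactly the stated graded pieces. With this lemma inserted, and the nonvanishing hypothesis on the $b_i$ at $c$ made explicit, your proof closes.
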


Set $ A ^{\sharp}\subset A^{\el}$ to be the locus where $\div b \cup \{c_0\}$ avoids the ramification locus of the spectral curve; this is open in $ A $. Now we give a slight generalization of \cite[{}5.18]{HH}; its proof is the same as in loc. cit, but we explain it here for completeness. 
\begin{proposition}\label{sharpstructure}
    Let $a\in  A ^{\sharp}$. Then a Higgs bundle $(E, \phi)$ lies in the fiber $W_\delta^+\cap h^{-1}(a)$ if and only if, under the spectral correspondence, it corresponds to a sheaf of the form
    \[\pi_a^{\ast}(L_1)(D_1+\cdots + D_{r-1}),\]
    where $D_i\subset \pi_a^{-1}(\bigsqcup_jc_{ij})\subset C_a$ are reduced effective Cartier divisors on $C_a$, such that $|D_i\cap \pi_a^{-1}(c_{ij})| = r-i$ for all $i,j$. Moreover, $W_{\delta}^+\cap h^{-1}(a)$ is reduced, and distinct choices of tuple $(D_1,\dots, D_{r-1})$ give rise to non-isomorphic Higgs bundles. 
\end{proposition}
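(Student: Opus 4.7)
The plan is to identify $W_\delta^+\cap h^{-1}(a)$ set-theoretically with the collection of tuples $(D_1,\dots,D_{r-1})$ described in the statement, and then combine this with \Cref{hh518} to deduce reducedness and the final count. For $a\in A^\sharp$, the spectral curve $C_a$ is \'etale over $C$ in a neighborhood of each $c_{ij}$, so $\pi_a^{-1}(c_{ij})$ consists of $r$ distinct reduced points, and the relevant divisor choices make sense.

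First I would establish the reverse containment. Given a tuple $(D_1,\dots,D_{r-1})$ satisfying the combinatorial condition $|D_i\cap\pi_a^{-1}(c_{ij})|=r-i$, there is a unique line bundle $L_1$ on $C$ (of degree determined by the graded pieces of $E_\delta$) such that the pushforward $E:=\pi_{a\ast}\mathcal L$ of the spectral sheaf $\mathcal L:=\pi_a^\ast(L_1)(D_1+\cdots+D_{r-1})$ matches $\det E_\delta$. I would then show $(E,\phi)\in W_\delta^+$ by building a full filtration of $E$ with associated graded $(E_\delta,\phi_\delta)$ and invoking \Cref{hh34}. The filtration is constructed on the spectral side: for each $k$, the subbundle $E_k\subset E$ arises from pushing forward the subsheaf $\mathcal L(-\sum_{l\geq k}D_l')\subset\mathcal L$ for an appropriate sub-collection $D_l'\subset D_l$ chosen compatibly with the nested structure, so that the graded quotients become $\mathcal O(\delta_0+\cdots+\delta_k)\otimes K^{-k}$ and the connecting maps are exactly the $b_i$. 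This identification is an \'etale-local computation at each $c_{ij}$, where unramifiedness of $\pi_a$ and the size condition on $D_i\cap\pi_a^{-1}(c_{ij})$ force the correct shifts on each graded piece.

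For the counting step, the number of valid tuples $(D_1,\dots,D_{r-1})$ equals $\prod_{i=1}^{r-1}\binom{r}{r-i}^{m_i} = \prod_{i=1}^{r-1}\binom{r}{i}^{m_i}$, which matches the degree of $h|_{W_\delta^+}$ in \Cref{hh518}. Distinct tuples give non-isomorphic spectral line bundles on $C_a$, hence non-isomorphic Higgs bundles under the spectral correspondence. Since the fiber of a finite flat map of degree $N$ has total length $N$, and we have exhibited $N$ distinct geometric points, each has length one, the fiber is reduced, and every point of $W_\delta^+\cap h^{-1}(a)$ arises this way.

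The main obstacle is verifying that the filtration assembled from the spectral data really has $(E_\delta,\phi_\delta)$ as its associated graded, with the correct transition maps rather than merely the correct graded object. An alternative route which sidesteps the direct construction is an inductive application of \Cref{hh415}: start from a canonical base point in $W_\delta^+\cap h^{-1}(a)$ (e.g.\ the one with a distinguished divisor choice at each $c_{ij}$), and observe that Hecke transforms at $c_{ij}$ along $\phi$-invariant subspaces of dimension $i$ correspond bijectively to modifying $D_i\cap\pi_a^{-1}(c_{ij})$; \Cref{hh415} then guarantees that these transforms preserve membership in $W_\delta^+$ while tracking the graded pieces of $E_\delta$ through the divisor combinatorics, which reduces the analysis to a single base case.
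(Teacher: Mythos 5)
Your overall architecture is close to the paper's: the paper likewise reduces everything to \Cref{hh518} together with the claim that each admissible tuple $(D_1,\dots,D_{r-1})$ is realized by a distinct point of $W_\delta^+\cap h^{-1}(a)$, and it realizes the tuples exactly by your ``alternative route'' --- an induction on $\delta$ via Hecke transforms (\Cref{hh415}), starting from the companion-matrix Higgs bundle in the case $\div b=0$. (The paper additionally proves the forward containment directly, by transporting the unique filtration of \Cref{hh34} to the spectral side and running an Euler-characteristic count to show each successive quotient $V_{i+1}/V_i$ is reduced of length $m_i(r-i)$; as you observe this is logically recoverable from the length count once the reverse containment and distinctness are in hand, but it is not mere decoration --- see below.)

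The genuine gap is your distinctness step. You assert that distinct tuples give non-isomorphic spectral line bundles, but two distinct reduced effective divisors $D=\sum_i D_i$ and $D'=\sum_i D_i'$ of the same degree supported on the finite set $\pi_a^{-1}(\Supp\delta)$ can perfectly well be linearly equivalent on $C_a$: this happens exactly when $h^0(\mathcal O_{C_a}(D))\ge 2$, which you cannot exclude for the particular curve $C_a$ and the particular points forced on you by $\delta$ (membership in $A^\sharp$ only controls ramification, not the linear system). Since your reduced-length argument needs $\prod_i\binom{r}{i}^{m_i}$ genuinely distinct closed points in the fiber, this must be repaired. The paper's fix is to recover the tuple intrinsically from the Higgs bundle rather than from the line bundle: by \Cref{hh34} the compatible filtration $E_1\subset\cdots\subset E_r$ is unique, and $D_i\cap\pi_a^{-1}(c_{ij})$ is read off as the set of eigenvalues of $\phi_{c_{ij}}$ not contained in $E_i|_{c_{ij}}$, so isomorphic Higgs bundles in $W_\delta^+$ necessarily carry equal tuples; this is exactly what the Hecke induction tracks, and it is why the forward-direction filtration analysis is not wasted. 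A smaller slip: requiring $\det(\pi_{a\ast}\mathcal L)=\det E_\delta$ only pins down $L_1^{\otimes r}$ up to the norm of $\mathcal O(D)$, not $L_1$ itself; the correct normalization is $L_1=\mathcal O_C(\delta_0)$, the bottom graded piece of $E_\delta$, and one must then check that this choice yields associated graded $(E_\delta,\phi_\delta)$ with the correct maps $b_i$ --- precisely the verification you flag as the obstacle and which the Hecke induction is designed to bypass.
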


\begin{proof}
    Let $(E,\phi)$ be a Higgs bundle corresponding to a point in $W_{\delta}^+\cap h^{-1}(a)$. By \Cref{hh34}, there is a unique filtration $0 = E_0\subset E_1\subset\cdots\subset E_r = E$ by subbundles such that $\phi(E_i) \subset E_{i+1}\otimes K$ and $(E_{\delta}, \Phi_{\delta}) = (\gr(E), \gr(\Phi))$. Let $U\in \Coh(C_a)$ be the sheaf obtained from $(E,\phi)$ by the spectral correspondence; we have $U = \coker(\pi_a^{\ast}(E\otimes K^{-1})\xrightarrow{\pi_a^{\ast}(\Phi) - x} \pi_a^{\ast}E)$, where $x$ is the tautological section of $T^{\ast}C$. We consider the following commutative diagram:
    \begin{center}
        \begin{tikzcd}
            \pi_a^{\ast}E_i\otimes K^{-1} \arrow{r}{\pi_a^{\ast}\phi-x} \arrow[d,hookrightarrow] & \pi_a^{\ast}E_{i+1} \arrow[d,hookrightarrow] \arrow[r] & V_{i+1}'\arrow[d] \arrow[r] & 0 \\
            \pi_a^{\ast}E\otimes K^{-1} \arrow{r}{\pi_a^{\ast}\phi - x} & \pi_a^{\ast}E \arrow[r] & U \arrow[r] & 0.
        \end{tikzcd}
    \end{center}
    Let $V_i = V_i'/\text{torsion}$ be the torsion free part of $V_i'$. Since $\phi$ is generically regular, $V_{i+1}$ is a generically rank $1$ sheaf, and also torsion free by definition. Moreover, one clearly has inclusions $V_i\hookrightarrow V_{i+1}\hookrightarrow\cdots$, and $V_1 = \pi_a^{\ast}(L_1), V_n = U$. 

    We want to understand when $V_i\hookrightarrow V_{i+1}$ fails to be surjective. Let $c\in \div b_i \subset C$ be a zero of $b_i$. By assumption we know $\phi_c(E_i\otimes K|_c)\subset E_i|_c$. Since $\pi_a$ is \'etale at $c$, we know $\phi_c$ is regular semisimple, and has distinct eigenvalues. Let $\lambda$ be an eigenvalue of $\phi_c$ not contained in $E_i|_c$. Then one has
    \[(\phi_c-\lambda)(E_i\otimes K^{-1})_c \subset E_i|_c,\]
    but since this is an inclusion of vector spaces of the same rank, they are equal. Thus we have:
    \[\ker(\pi_a^{\ast}E_{i+1}|_{(c,\lambda)}\rightarrow V_{i+1}|_{(c,\lambda)}) \supset \pi_a^{\ast}(E_i\otimes K^{-1})_{(c,\lambda)} = \pi_a^{\ast}(E_i)\]
    so the map $V_i\hookrightarrow V_{i+1}$ fails to be surjective at $(c,\lambda)$ whenever $\lambda$ is an eigenvalue of $\phi_c$ not contained in $E_i|_c$. Applying this argument for all zeroes of all $b_i$, we deduce that the support of $V_{i+1}/V_i$ has at least $m_i(n-i)$ points. 

    Now we have a chain of inclusions
    \[\pi_a^{\ast}(L_1) = V_1 \hookrightarrow V_2 \hookrightarrow\cdots\hookrightarrow V_n = U\]
    such that each of the $V_i$ are torsion free sheaves generically of rank one on $C_a$. In particular, the $V_{i+1}/V_i$ are supported in dimension $0$ and thus
    \[\chi(V_n) = \chi(V_1) + \sum_{i=1}^{r-1}\ell(V_{i+1}/V_i),\]
    where $\ell$ denotes the length of the sheaf. But by Riemann-Roch and projection formula one has
    \begin{align*}
        \chi(V_n) &= \chi(U) =\chi(E) = -r(r-1)(g-1) - r(g-1) = r^2(1-g)\\
        \chi(V_1) &= \chi(\pi_a^{\ast}(L_1)) = \chi(L_1\otimes\pi_{a\ast}\mathcal O_{C_a}) = r^2(1-g)+r\deg(L_1).
    \end{align*}
    But from \Cref{eq:delta}, we know
    \[r(r-1)(1-g) = \sum_{i=0}^{r-1}(\deg L_1 + m_1 + \cdots + m_i- i(2g-2)) = r\deg L_1 + \sum_{i=1}^{r-1}m_i(r-i) + r(r-1)(1-g).\]
    Thus we have
    \[\sum\ell(V_{i+1}/V_i) = \sum_{i=1}^{r-1}m_i(r-i).\]
    Since the support of $V_{i+1}/V_i$ is at least $m_i(r-i)$ points, it follows that the support of each $V_{i+1}/V_i$ is reduced, and its support can be described fiberwise as the $n-i$ eigenvalues of $\phi_{c_{ij}}$ for each zero $c_{ij}$ of $b_i$. Let $D_i$ be the support of $V_{i+1}/V_i$; since $D_i\subset C_a^{sm}$, it is a reduced effective Cartier divisor. Since $V_1$ is a line bundle, it follows that the $V_i$ are all line bundles; then the exact sequence
    \[0 \rightarrow V_iV_{i+1}^{\vee}\rightarrow\mathcal O_{C_a}\twoheadrightarrow \mathcal O_{D_i}\rightarrow 0\]
    realizes $V_{i+1}\cong V_i(D_i)$, and thus $U = \pi_a^{\ast}(D_1+\cdots + D_{r-1})$. 

    By \Cref{hh518}, the map $W_{\delta}^+$ is finite and flat of degree $\prod_{i=1}^{r-1}{r\choose i}^{m_i}$; there are exactly this many tuples $(D_1,\dots, D_{r-1})$, where $D_i$ is a reduced divisor supported on $(r-i)$ preimages in $C_a$ of each zero of $b_i$. It thus suffices to show that each sheaf of the form $\pi_a^{\ast}(L_1)(D_1+\cdots+D_{r-1})$ describes a distinct point in $W_{\delta}^+$. 
    
    For this we proceed by induction on $\div(b) = \delta$, using the same argument as in \cite[Proof of Proposition 5.18(2)]{HH}; for this part we do not restrict the degree of $E$ or $E_{\delta}$. For $\div b= 0$ it suffices to construct an element of the upward flow; for this we can always take \cite[remark 3.8]{HH}; namely letting $a = (a_1,\dots, a_n)\in  A $, we can consider the Higgs bundle defined by: 
    \[E_L:= L\oplus L\otimes K^{-1}\oplus\cdots\oplus L\otimes K^{1-n}, \quad \phi = \begin{pmatrix} 0 & 0 & \dots & 0 & -a_n\\
    1 &0 & \dots & 0 & -a_{n-1}\\
    0 & 1 & \dots & 0 & -a_{n-2} \\
    \vdots & \vdots & \ddots & \vdots & \vdots \\
    0 & 0 & \dots & 1 & -a_1
    \end{pmatrix}.\]
    Now suppose we have constructed for each $\delta$ and each tuple of $(L_1, D_1,\dots, D_n)$ a Higgs bundle $(E,\phi)$ with a compatible filtration $E_1\subset \cdots \subset E_n$ realizing $(E,\phi)$ as an element of $W_{\delta}^+$, such that the $D_i$ are precisely the eigenvalues of the $\phi_{c_{ij}}$ not contained in $E_i$. Fix $i$ and a point $c\notin\Supp\delta$ avoiding the ramification locus of $\pi_a$, and let $\delta'_i:= \delta_i + c, \delta_j' := \delta_j$ for all $j\ne i$. Suppose $\delta'$ also defines a very stable Higgs bundle; then for each choice $I$ of $n-i$ points on $\pi_a^{-1}(c)$, we consider $(E', \phi') = \mathcal H_{V_I}(E,\phi)$ to be the Hecke transform of $(E,\phi)$, where $V_I\subset E_c$ is the subspace defined by $I$. By \Cref{hh415}, this has a (unique) filtration $E_1'\subset\cdots\subset E_r'$ realizing it as an element of the upward flow of $(E_{\delta'}, \phi_{\delta'})$, and the eigenvalues of $\phi_c'$ not contained in $E_i'$ correspond precisely to the points $I$. The uniqueness of filtration tells us that two points we obtain from different $I$ and different $(E,\phi)$ correspond to nonisomorphic Higgs bundles, since the $V_{i+1}/V_i$ are determined precisely by the subsets $I$ we picked, hence the result. 
\end{proof}

Let $D_{ij}= \pi^{-1}(c_{ij})\subset \widetilde C$ be the universal divisors (if $-c_{0j}$ is a point, we let $D_{0j} = \iota^{\ast}\pi^{-1}(-c_{0j})$). By definition, the $D_{ij}$ are \'etale over $A^{\sharp}$, so in particular the diagonal section $D_{ij}\hookrightarrow D_{ij}\times_AD_{ij}$ is a closed and open immersion. This leads to the following observation:

\begin{corollary}\label{sharpproduct}
    Under the multiplications $\mu$, the product of divisors
    \begin{equation}\label{wdivisor}
        D_{\delta}:= \prod_{i,j}(D_{ij}^{r-i}\setminus \Delta_{D_{ij}})\subset \prod_{i,j}M_{r, \pm1-r(r-1)(g-1)}
    \end{equation}
    is mapped isomorphically to $W_{\delta}^+$ over $\mathcal A^{\sharp}$, where $D_{ij}^{r-i} := \underbrace{D_{ij}\times_{ A }\cdots\times_{ A }D_{ij}}_{r-i}$, and the $\Delta$ are the diagonals of the $D_{ij}^{r-i}$.
    \qed
\end{corollary}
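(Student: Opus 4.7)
The plan is to construct the map $D_\delta \to W_\delta^+$ explicitly as an iterated composition of the Abel--Jacobi map with the multiplication maps $\mu$ of \Cref{section:etale}, and then upgrade bijectivity on geometric fibers (which will come from \Cref{sharpstructure}) to an isomorphism via a comparison with the degree formula of \Cref{hh518}.

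First I would build the morphism. For each factor $D_{ij}$, composing the Abel--Jacobi map $\AJ : \widetilde{\mathcal C} \to \mathcal M_{r, d_0+1}$ (and $\iota \circ \AJ$ in the cases where $-c_{0j}$ is the point of $C$) with the canonical structure map realizes each factor inside $\mathcal M_{r, \pm 1 - r(r-1)(g-1)}$, sending $x \mapsto \mathfrak{m}_x^\vee$. Iterating $\mu$ over the $r-i$ copies of $D_{ij}$, together with a fixed factor implementing the twist $\pi_a^\ast \mathcal O_C(\delta_0)$, produces a morphism $D_\delta \to \mathcal M_{r,d}^{reg,\el}$ over $A^\sharp$. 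On the geometric fiber over $a \in A^\sharp$, the tuple $(\dots, x_{ij,k}, \dots)$ is sent to the Higgs bundle whose spectral sheaf is
\[ \pi_a^\ast \mathcal O_C(\delta_0)\Bigl(\sum_{i,j,k} x_{ij,k}\Bigr), \]
and by \Cref{sharpstructure} this sheaf represents a point of $W_\delta^+ \cap h^{-1}(a)$. Hence the morphism lands in $W_\delta^+$.

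To conclude that it is an isomorphism, I would invoke \Cref{sharpstructure} for both directions of bijectivity on geometric points: every point of $W_\delta^+ \cap h^{-1}(a)$ arises from a tuple $(D_1, \dots, D_{r-1})$ of reduced effective divisors with $|D_i \cap \pi_a^{-1}(c_{ij})| = r-i$, and distinct tuples give non-isomorphic Higgs bundles. Since the $D_{ij}$ are étale over $A^\sharp$ by the very definition of $A^\sharp$ (their supports avoid the ramification locus of the spectral cover) and removing the fiberwise diagonals preserves étaleness, $D_\delta \to A^\sharp$ is étale; meanwhile $W_\delta^+ \to A^\sharp$ is finite flat of degree $\prod_{i=1}^{r-1} \binom{r}{i}^{m_i}$ by \Cref{hh518}, and is reduced again by \Cref{sharpstructure}. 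A finite morphism from an étale scheme to a reduced finite flat target of equal degree that is bijective on geometric fibers is an isomorphism.

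The main obstacle is bookkeeping: one has to track the $\mu_r$-gerbe structure consistently through the many applications of $\mu$, and ensure that the auxiliary line bundle $\mathcal N$ appearing in \Cref{AJ} combines with the fixed twist by $\mathcal O_C(\delta_0)$ to produce exactly the spectral sheaf $\pi_a^\ast\mathcal O_C(\delta_0)(D_1 + \cdots + D_{r-1})$ prescribed by \Cref{sharpstructure}. Once these normalizations are matched, the geometric content of the corollary is essentially a direct translation of \Cref{sharpstructure}.
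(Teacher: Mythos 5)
Your overall strategy is the one the paper intends (the corollary is stated with no proof precisely because it is meant to follow from the \'etaleness of the $D_{ij}$ over $A^{\sharp}$, the fiberwise classification and reducedness in \Cref{sharpstructure}, and the degree count of \Cref{hh518}), and your construction of the morphism via iterated $\mu$'s composed with $\AJ$ (resp.\ $\iota\circ\AJ$) is correct. However, the final counting step as you wrote it is wrong: the map is \emph{not} bijective on geometric fibers and the degrees are \emph{not} equal. Each $D_{ij}=\pi^{-1}(c_{ij})$ is finite \'etale of degree $r$ over $A^{\sharp}$, so $D_{ij}^{r-i}\setminus\Delta$ has degree $r(r-1)\cdots(i+1)=r!/i!$, and $D_{\delta}$ has degree $\prod_i (r!/i!)^{m_i}$ over $A^{\sharp}$, whereas $W_{\delta}^+$ has degree $\prod_i\binom{r}{i}^{m_i}$ by \Cref{hh518}. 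The discrepancy is exactly $\prod_{i,j}(r-i)!$: by \Cref{sharpstructure} a point of $W_{\delta}^+\cap h^{-1}(a)$ is determined by the unordered subsets $D_i\cap\pi_a^{-1}(c_{ij})$, so two ordered tuples differing by a permutation of the points lying over a fixed $c_{ij}$ yield the same spectral sheaf $\pi_a^{\ast}(L_1)(D_1+\cdots+D_{r-1})$. Hence $m:D_{\delta}\to W_{\delta}^+$ is a $\prod_{i,j}(r-i)!$-to-one cover, and the purported lemma ``finite, \'etale source, equal degree, fiberwise bijective $\Rightarrow$ isomorphism'' does not apply as stated.

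The fix is to pass to the quotient by $\mathfrak S=\prod_{i,j}\mathfrak S_{r-i}$, which acts freely on $D_{ij}^{r-i}\setminus\Delta$ (the tuples have distinct entries), so that $D_{\delta}/\mathfrak S\to A^{\sharp}$ is finite \'etale of degree exactly $\prod_i\binom{r}{i}^{m_i}$; your closing argument (finite morphism, fiberwise bijective by \Cref{sharpstructure}, onto a reduced finite flat target of the same degree) then correctly shows that $D_{\delta}/\mathfrak S\to W_{\delta}^+$ is an isomorphism, i.e.\ that $D_{\delta}\to W_{\delta}^+$ is a $\mathfrak S$-torsor. This is in fact the reading the paper itself relies on: in the proof of \Cref{sharpmirror} the sheaf $((m\times\id)_{\ast}\mathcal O)^{\mathrm{sign}}$ is used and identified with a line bundle on $\mathcal W_{\delta}^+$, which would be zero if $m$ were an isomorphism and the $\mathfrak S$-action trivial. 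So the word ``isomorphically'' in the corollary should itself be read modulo the symmetric group action; once you make that adjustment, your proof goes through, with the remaining work being the normalization bookkeeping you already flagged (matching the line bundle $\mathcal N$ of \Cref{AJ} and the twist by $\mathcal O_C(\delta_0)$ so that the composite lands in degree $d$).
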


Note that $D_{ij}$ are also isomorphic to affine spaces, so by the same argument as in \Cref{section:normalization} we know that, letting $\mathcal D_{ij} := D_{ij}\times_{\widetilde C}\widetilde{\mathcal C}$, we have a canonical isomorphism
\[\mathcal D_{ij}\cong D_{ij}\times B\mu_r.\]
Then the corollary above, along with \Cref{square}, allows us to prove the following:

\begin{proposition}\label{sharpmirror}
    \Cref{main2} holds over $\mathcal M_{r,d_0} ^{\sharp}$, i.e. 
    \[S_{\overline P_{d,e}|_{\mathcal M_{r,d}^{\sharp}}}(\mathcal O_{\mathcal W_{\delta}^+}(-e)|_{ \mathcal M_{r,d} ^{\sharp}}) = \Lambda_{\delta}|_{ \mathcal M_{r,e} ^{\sharp}}.\]
\end{proposition}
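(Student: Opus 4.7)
The plan is to reduce the computation to an Abel--Jacobi calculation by slicing $\mathcal W_\delta^+$ into simple pieces and distributing $\overline P_{d,e}$ across them via the theorem of the square.

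\emph{Step 1 (Factorization of $\mathcal W_\delta^+$).} By \Cref{sharpproduct}, over $A^\sharp$ the stack $\mathcal W_\delta^+$ is the image under the multiplication $\mu$ of $\mathcal D_\delta := \prod_{i,j}(\mathcal D_{ij}^{r-i}\setminus\Delta)$, where each $\mathcal D_{ij}\cong D_{ij}\times B\mu_r$ is a $\mu_r$-gerbe over the divisor $D_{ij}$, embedded into $\mathcal M_{r,\pm 1 - r(r-1)(g-1)}^{reg,\el}$ by the Abel--Jacobi map or its dual (for $i=0$). Base change and the projection formula along $\mu\times\id$ rewrite $S_{\overline P_{d,e}}(\mathcal O_{\mathcal W_\delta^+}(-e))$ as an $Rp_{2*}$-pushforward on $\mathcal D_\delta\times_A\mathcal M_{r,e}^\sharp$ of the pullback $(\mu\times\id)^*\overline P_{d,e}$, with weight/isotypic bookkeeping reflecting the (symmetric-quotient) structure of the map $\mathcal D_\delta \to \mathcal W_\delta^+$.

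\emph{Step 2 (Decomposing the kernel).} Writing $\mu$ as an iterated binary multiplication, \Cref{square} together with \Cref{dual} (for the $i=0$ dual factors) yields an external tensor product identification
\[(\mu\times\id)^*\overline P_{d,e} \cong \bigotimes_{i,j,k} \pi_{i,j,k}^*\,\overline P_{1,e}^{(\pm)}\]
on $\mathcal D_\delta\times_A\mathcal M_{r,e}^\sharp$, where the tensor product runs over the $r-i$ factors $k$ in each $\mathcal D_{ij}^{r-i}$, each $\pi_{i,j,k}$ projects to $\mathcal D_{ij}^{(k)}\times_A\mathcal M_{r,e}^\sharp$, and the superscript $(\pm)$ indicates dualization for $i=0$. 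By \Cref{AJ}, each $\overline P_{1,e}|_{\mathcal D_{ij}\times_A\mathcal M_{r,e}^\sharp}$ is identified with the universal sheaf $\mathcal F^e|_{\mathcal D_{ij}\times_A\mathcal M_{r,e}^\sharp}$ twisted by a line bundle pulled back from $\mathcal D_{ij}$; fiberwise over $a\in A^\sharp$ this becomes a tensor product of one-dimensional fibers of $\mathcal F^e$ at $r-i$ distinct preimages of $c_{ij}$ in $\widetilde C_a$.

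\emph{Step 3 (Pushforward yields wedge powers).} Push along $p_2$. Since $D_{ij}\to A^\sharp$ is \'etale of degree $r$, the pushforward of $\mathcal F^e|_{D_{ij}\times\mathcal M_{r,e}^\sharp}$ is (the pullback of) the rank-$r$ bundle $\EE^e_{c_{ij}}$ on $\mathcal M_{r,e}^\sharp$. On the $(r-i)$-fold product $D_{ij}^{r-i}\setminus\Delta$, the pushforward of the external product of universal-sheaf fibers yields the tensor power $(\EE^e_{c_{ij}})^{\otimes(r-i)}$ (away from diagonal contributions); extracting the sign-isotypic component under the $S_{r-i}$-action, as selected by the quotient $\mathcal D_\delta\to\mathcal W_\delta^+$ together with the weight $-e$, produces $\bigwedge^{r-i}\EE^e_{c_{ij}}$. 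Taking the tensor product over all $(i,j)$ and comparing with \Cref{eq:lambda} gives $\Lambda_\delta|_{\mathcal M_{r,e}^\sharp}$.

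The main obstacle is bookkeeping the various line-bundle twists: the $\QQ$-divisor line bundle $\mathcal N$ from \Cref{AJ} and the numerically trivial line bundles $\mathcal L_d$ from \Cref{localpoincare} must cancel consistently across the iterated tensor product so that no residual twist remains. A related subtlety is verifying that the weight convention on $\mathcal O_{\mathcal W_\delta^+}(-e)$, transported via $\mathcal D_\delta\to\mathcal W_\delta^+$, selects the sign-isotypic piece (yielding wedge rather than symmetric powers in Step 3) and is compatible with the dualization for $i=0$ factors coming from \Cref{dual}.
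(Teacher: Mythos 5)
Your proposal is correct and follows essentially the same route as the paper: factor $\mathcal W_{\delta}^+$ into the pieces $\mathcal D_{ij}^{r-i}\setminus\Delta$ via \Cref{sharpproduct}, decompose the kernel using \Cref{square}, \Cref{dual} and \Cref{AJ}, then push forward and extract the sign-isotypic component to produce the wedge powers in \Cref{eq:lambda}. The one point you leave loose --- the effect of deleting the diagonals --- is settled in the paper by noting that since the $\Delta$ are open and closed, the pushforward from the complement is a direct summand of the full tensor power, and a rank comparison with the left-hand side (a twisted bundle of rank $\prod_{i,j}\binom{r}{r-i}$) forces equality.
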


\begin{proof}
    For this proof, we restrict all objects to $ A ^{\sharp}$. For notational convenience, for an $A$-stack $X$, we denote $X^{\sharp}:= X\times_AA^{\sharp}$, and for a sheaf $F$ on $X$, we denote by $F^{\sharp}:= F|_{X^{\sharp}}$.

    Consider the multiplication maps
    \[\begin{tikzcd}
        \underset{i,j}{\prod_A}((\mathcal D_{ij}^{\sharp})^{r-i}\setminus\Delta) \arrow[r,hookrightarrow]\arrow{d}{m} & \underset{i,j}{\prod_A} \mathcal M_{r,d_0+1}^{\sharp,reg} \arrow{d}{\mu} \\
        (\mathcal W_{\delta}^+)^{\sharp} \arrow[r,hookrightarrow] & \mathcal M_{r,d}^{\sharp,reg}.
    \end{tikzcd}.\]
    By \Cref{square}, we have
    \[(m\times\id)^{\ast}P_{d,e}^{\sharp} \cong \boxtimes_{i,j}(P_{\pm1,e}^{\sharp}\boxtimes\cdots\boxtimes P_{\pm1,e}^{\sharp})|_{(\mathcal D_{ij}^{\sharp})^{r-i}\setminus\Delta}\]
    (here we use $-1$ when $i=0$ and $-c_{0j}$ is a point, else we take $+1$). Moreover, we observe that after trivializing the gerbes as in the diagram below
    \[\begin{tikzcd}
        \underset{i,j}{\prod_A}((D_{ij}^{\sharp})^{r-i}\setminus\Delta) \arrow[r] \arrow{d}{m} & \underset{i,j}{\prod_A}((\mathcal D_{ij}^{\sharp})^{r-i}\setminus\Delta) \arrow{d}{m} \\
        (W_{\delta}^+)^{\sharp} \arrow[r] & (\mathcal W_{\delta}^+)^{\sharp},
    \end{tikzcd}\]
    $\mathcal O_{\mathcal W_{\delta}^+}(-e)^{\sharp}$ is trivial under the pullback along either composition, whence we have
    \[m^{\ast}\mathcal O_{\mathcal W_{\delta}^+}(-e) = \boxtimes \mathcal O_{\mathcal W_{\delta}^+}(-e).\]
    In particular now
    \begin{equation}\label{productformula}
        (m\times\id)^{\ast}(P_{d,e}^{\sharp}\otimes \mathcal O_{\mathcal W_{\delta}^+}(-e)^{\sharp}) = \boxtimes (P_{\pm1,e}^{\sharp}\otimes \mathcal O_{\mathcal D_{ij}}(-e)^{\sharp})|_{(D_{ij}^{\sharp})^{r-i}\setminus\Delta}.
    \end{equation}
    As usual let $p_2:\underset{i,j}{\prod_A}((D_{ij}^{\sharp})^{r-i}\setminus\Delta)\times_A\mathcal M_{r,e}^\sharp \rightarrow \mathcal M_{r,e}^\sharp$
    be the projection. Then the pushforward under $p_2$ of either side of \Cref{productformula} has an action of $\mathfrak S := \prod_{i,j}\mathfrak S_{r-i}$, where $\mathfrak S_{r-i}$ is the symmetric group which acts by permuting the terms of $((D_{i,j}^{\sharp})^{r-i}\setminus\Delta)$. Now we have
    \begin{equation}\label{product2}
        (p_{2\ast}(m\times\id)^{\ast}(P_{d,e}^{\sharp}\otimes \mathcal O_{\mathcal W_{\delta}^+}(-e)^{\sharp}))^{sign} = (p_{2\ast}\boxtimes (P_{\pm1,e}^{\sharp}\otimes \mathcal O_{\mathcal D_{ij}}(-e)^{\sharp})|_{(D_{ij}^{\sharp})^{r-i}\setminus\Delta})^{sign}.
    \end{equation}
    We treat the left hand side of \Cref{product2} first. Since $(m\times\id)$ is $\mathfrak S$-equivariant, we have
    \begin{align*}
        (p_{2\ast}(m\times\id)^{\ast}(P_{d,e}^{\sharp}\otimes \mathcal O_{\mathcal W_{\delta}^+}(-e)^{\sharp}))^{sign} &= p_{2\ast}((m\times\id)_{\ast}((m\times\id)^{\ast}P_{d,e}^{\sharp}\otimes \mathcal O_{\mathcal W_{\delta}^+}(-e)^{\sharp})^{sign})\\
        &= p_{2\ast}(P_{d,e}^{\sharp}\otimes \mathcal O_{\mathcal W_{\delta}^+}(-e)^{\sharp}\otimes ((m\times\id)_{\ast}\mathcal O_{\underset{i,j}{\prod_A}(D_{ij}^{r-i}\setminus\Delta)^{\sharp}\times_A\mathcal M_{r,e}^\sharp})^{sign}).
    \end{align*}
    But $((m\times\id)_{\ast}\mathcal O_{\underset{i,j}{\prod_A}(D_{ij}^{r-i}\setminus\Delta)\times_A\mathcal M_{r,e}^s}^{\sharp})^{sign}$ is an untwisted line bundle on $\mathcal W_{\delta}^+\times_A\mathcal M_{r,e}^\sharp$, hence it is just $\mathcal O_{\mathcal W_{\delta}^+}^{\sharp}$, and the left hand side reduces to $S_{\overline P_{d,e}^{\sharp}}(\mathcal O_{\mathcal W_{\delta}^+}(-e)^{\sharp})$. Note also that in this case we see that $S_{\overline P_{d,e}^{\sharp}}(\mathcal O_{\mathcal W_{\delta}^+}(-e)^{\sharp})$ is a twisted vector bundle of rank $\prod_{i,j}{r\choose r-i}$. 
    
    For the right hand side of \Cref{product2}, notice that, by \Cref{AJ}, we have
    \[p_{2\ast}\boxtimes(P_{\pm1,e}^{\sharp}\otimes\mathcal O_{\mathcal W_{\delta}^+}(-e)^{\sharp}) = \bigotimes_{i,j}p_{2\ast}(P_{\pm1,e}|_{D_{i,j}^{\sharp}}\otimes \mathcal O_{\mathcal W_{\delta}^+}(-e)^{\sharp}) = \bigotimes_{i,j}(\EE_{c_{ij}}^{\sharp})^{\otimes r-i},\]
    so that 
    \[p_{2\ast}\boxtimes(P_{\pm1,e}^{\sharp}\otimes\mathcal O_{\mathcal W_{\delta}^+}(-e)^{\sharp})^{sign} = \bigotimes_{i,j}\bigwedge^{r-i}\EE_{c_{ij}}^{\sharp} = \Lambda_{\delta}^{\sharp}.\]
    Note that by \Cref{dual} and the fact that the relative dualizing sheaf of $D_{ij}\rightarrow A$ is trivial, this formula works for $i=0$, regardless of sign of $c_{0j}$. Since the diagonals $\Delta$ are closed and open substacks, we have that 
    \[p_{2\ast}\boxtimes(P_{\pm1,e}^{\sharp}\otimes\mathcal O_{\mathcal W_{\delta}^+}(-e)^{\sharp})|_{(D_{ij}^{\sharp})^{r-i}\setminus\Delta} ^{sign}\rightarrow p_{2\ast}\boxtimes(P_{\pm1,e}^{\sharp}\otimes\mathcal O_{\mathcal W_{\delta}^+}(-e)^{\sharp})^{sign} \]
    is a summand. But by the computation on the left hand side these are vector bundles of the same rank, hence the the same. The result follows. 
\end{proof}

\section{Proof of Main Theorems}

\subsection{Proof over the Smooth Locus}\label{smoothproof}

\noindent The purpose of this section is to prove \Cref{main2} over the locus of smooth spectral curves. The arguments in this section follow those in \cite[prop. 5]{Ar1}. For convenience let $\widetilde{g} := r^2(g-1)+1 = \dim A $; recall that $\widetilde{g}$ is the genus of the spectral curves, and also the relative dimension of the Hitchin fibration. We first recall the form of the inverse Fourier transform, which is essentially due to \cite[Theorem 2.2]{Muk} (c.f. \cite[Theorem C]{Ar2}, \cite[Proposition 4.2]{MSY}):
\begin{equation}\label{eq:inverse}
    S_{\overline P_{d,e}^{\el}}^{-1}(\mathcal F) = p_{2\ast}(p_1^{\ast}\mathcal F\otimes (\overline P_{d,e}^{\el})^{\vee})[\widetilde{g}]. 
\end{equation}
The following lemma will also be used implicitly in the sequel:
\begin{lemma}\label{trivial}
    $M_{r,d}$ has trivial dualizing sheaf. 
\end{lemma}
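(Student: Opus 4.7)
The plan is to trivialize the dualizing sheaf by exhibiting a nowhere-vanishing section of the canonical bundle, using Hitchin's holomorphic symplectic structure on the stable locus. Recall from \cite{Hi1} that at a stable point $(E,\phi) \in M_{r,d}^s$, the tangent space is canonically identified with the hypercohomology $\mathbb{H}^1(C, \mathrm{End}(E) \xrightarrow{[\phi, \cdot]} \mathrm{End}(E)\otimes K_C)$, and Serre duality produces a canonical non-degenerate pairing on this space. This assembles into a global holomorphic symplectic $2$-form $\omega$ on $M_{r,d}^s$.

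Now $M_{r,d}^s$ is smooth of dimension $2\widetilde g$, so its dualizing sheaf agrees with the canonical bundle $\det \Omega^1_{M_{r,d}^s}$. Non-degeneracy of $\omega$ means $\omega^{\widetilde g}$ is a nowhere-vanishing section of $\det \Omega^1_{M_{r,d}^s}$, trivializing the canonical bundle. This immediately gives the conclusion on $M_{r,d}^s$, which is the case relevant for \eqref{eq:inverse} and for the applications elsewhere in the paper.

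To upgrade the triviality from $M_{r,d}^s$ to the full semistable moduli space $M_{r,d}$ (in cases where they differ), one uses that the strictly semistable locus $M_{r,d} \setminus M_{r,d}^s$ has codimension at least $2$. Since $M_{r,d}$ is a moduli space of pure one-dimensional semistable sheaves on the symplectic surface $T^*C$, it is Gorenstein, so its dualizing sheaf is a maximal Cohen--Macaulay line bundle. Applying \Cref{lem:CMprop} to the isomorphism $\mathcal O_{M_{r,d}^s} \xrightarrow{\sim} \omega_{M_{r,d}^s}$ supplied by $\omega^{\widetilde g}$, we extend it to a global isomorphism $\mathcal O_{M_{r,d}} \xrightarrow{\sim} \omega_{M_{r,d}}$.

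The main obstacle, if one insists on the statement for all $M_{r,d}$ including strictly semistable loci, is verifying the codimension of the strictly semistable locus and the Gorenstein property uniformly in $(r,d)$; both are standard but slightly delicate. For the uses in \Cref{smoothproof}, however, everything is done over the smooth stable (in fact elliptic) locus, so the one-line argument via $\omega^{\widetilde g}$ is all that is actually needed.
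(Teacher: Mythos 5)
Your proposal is correct and takes essentially the same route as the paper: the holomorphic symplectic form trivializes the canonical bundle on the stable locus via $\omega^{\widetilde g}$, and the Gorenstein property together with the codimension-$\ge 2$ bound on the complement extends the trivialization to all of $M_{r,d}$. The paper's proof is simply a more compressed version of this argument.
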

\begin{proof}
    Since $M_{r,d}$ is Gorenstein, its dualizing sheaf is a line bundle; moreover, it's symplectic, so the dualizing sheaf restricted to the smooth part $M^s$ is trivial. But the complement of $M^s$ has codimension at least $2$, and the result follows. 
\end{proof}

\begin{lemma}\label{fiberlemma}
    Let $h:M\rightarrow A$ be a smooth proper fibration of smooth varieties, and $\mathcal F\in D^b_{\Coh}(M)$ such that for each closed point $a\in A$, the derived restriction $\mathcal F|_{h^{-1}(a)}$ is a dimension $0$ sheaf of constant length $r$. Then $\mathcal F$ is a sheaf which is flat over $A$, and whose support is finite over $A$. 
\end{lemma}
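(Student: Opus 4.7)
The plan is to establish, in order: (i) $\mathcal F$ is concentrated in cohomological degree $0$, hence is a coherent sheaf; (ii) this sheaf is flat over $A$; and (iii) its support is finite over $A$. Steps (i) and (ii) proceed by iterated use of truncation triangles together with the local criterion of flatness; step (iii) follows from properness plus fiberwise $0$-dimensionality.

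For (i) and (ii), first control the top: let $b$ be the largest index with $\mathcal H^b(\mathcal F) \neq 0$. The triangle $\tau^{\leq b-1}\mathcal F \to \mathcal F \to \mathcal H^b(\mathcal F)[-b]$ and the right $t$-exactness of $Li_a^*$ give $H^b(Li_a^* \mathcal F) \simeq i_a^* \mathcal H^b(\mathcal F)$. The left side vanishes whenever $b > 0$, so $\mathcal H^b(\mathcal F)$ restricts to zero on every closed fiber; since $\Supp(\mathcal H^b(\mathcal F))$ is closed in $M$ and closed points of the variety $A$ are dense, this forces $\mathcal H^b(\mathcal F) = 0$, i.e.\ $b \leq 0$. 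Next, apply $Li_a^*$ to the triangle $\tau^{\leq -1}\mathcal F \to \mathcal F \to \mathcal H^0(\mathcal F)$: because $Li_a^*(\tau^{\leq -1}\mathcal F) \in D^{\leq -1}$ and $Li_a^*\mathcal F$ sits in degree $0$, the long exact sequence yields $\mathrm{Tor}_1^{\mathcal O_A}(\mathcal H^0(\mathcal F), k(a)) = 0$ for every closed $a$. By the local criterion of flatness, $\mathcal H^0(\mathcal F)$ is flat over $A$, so $Li_a^* \mathcal H^0(\mathcal F) = i_a^* \mathcal H^0(\mathcal F)$ is a sheaf in degree $0$. The same triangle then shows that the natural map $Li_a^*\mathcal F \to Li_a^* \mathcal H^0(\mathcal F)$, now a map between sheaves on $h^{-1}(a)$, is an isomorphism, and hence $Li_a^*(\tau^{\leq -1}\mathcal F) = 0$. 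Its top cohomology is $i_a^* \mathcal H^{-1}(\mathcal F)$, so the same support argument as before gives $\mathcal H^{-1}(\mathcal F) = 0$. Iterating on $\tau^{\leq -2}\mathcal F$, etc.\ gives $\mathcal H^i(\mathcal F) = 0$ for all $i < 0$, completing (i) and (ii).

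For (iii), let $Z := \Supp(\mathcal F) \subset M$. Then $Z \to A$ is proper, being the restriction of $h$ to a closed subscheme. Each closed fiber $Z \cap h^{-1}(a)$ equals $\Supp(\mathcal F|_{h^{-1}(a)})$, which is $0$-dimensional by hypothesis; upper semi-continuity of fiber dimension together with density of closed points in $A$ implies every fiber of $Z \to A$ is $0$-dimensional. A proper quasi-finite morphism is finite, and the conclusion follows.

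The main technical obstacle is step (ii): translating mere fiberwise concentration in degree $0$ into genuine flatness over $A$ and vanishing of all lower cohomology sheaves. The top-cohomology bound is immediate from the truncation triangle, but the flatness statement and the inductive vanishing of $\mathcal H^i(\mathcal F)$ for $i<0$ both rely on the local criterion of flatness applied after careful bookkeeping in the long exact sequence of the derived fiber; the finite-support conclusion is then a routine consequence of properness.
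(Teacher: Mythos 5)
Your argument is correct, and its overall skeleton coincides with the paper's: bound the cohomological amplitude from above using right $t$-exactness of $Li_a^*$, split off $\mathcal H^0(\mathcal F)$ by a truncation triangle, prove that $\mathcal H^0(\mathcal F)$ is flat over $A$, and then use the resulting vanishing of higher derived restrictions to kill the lower truncation fiber by fiber. The one genuinely different ingredient is the flatness step. The paper first observes that $\Supp\mathcal H^0(\mathcal F)$ is finite over $A$, pushes forward, and uses the \emph{constant} length $r$ together with reducedness of $A$ to see that $h_*\mathcal H^0(\mathcal F)$ is a rank-$r$ vector bundle, whence $\mathcal H^0(\mathcal F)$ is $A$-flat because flatness can be checked after pushforward along the finite map. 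You instead extract $\mathrm{Tor}_1^{\mathcal O_A}(\mathcal H^0(\mathcal F),k(a))=0$ directly from the long exact sequence and invoke the local criterion of flatness (plus openness of the flat locus and the Jacobson property to pass from closed points to all points, which you should make explicit). Your route is slightly more economical: it never uses the constant-length hypothesis, which in your version becomes an automatic consequence of flatness and finiteness of the support rather than an input; the paper's route trades this for a softer cohomology-and-base-change argument. Two small points of hygiene: in the support arguments you should appeal to density of closed points in $M$ (or in the closed support itself), not in $A$; and in step (iii) it is worth noting that for the now-established sheaf $\mathcal F$ one has $\Supp(\mathcal F)\cap h^{-1}(a)=\Supp(i_a^*\mathcal F)$ by Nakayama, so the fiberwise hypothesis really does control the fibers of $Z\to A$.
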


\begin{proof}
    Let $i_a:h^{-1}(a)\hookrightarrow M$ denote the closed immersion of the fiber. It is clear that $\mathcal F\in D^{(-\infty, 0]}(M)$. Applying $i_a^{\ast}$ to the triangle
    \[\tau_{<0} \mathcal F\rightarrow\mathcal F\rightarrow\tau_{\ge 0}\mathcal F\xrightarrow{+1},\]
    we obtain isomorphisms
    \[i_a^{\ast}\mathcal F \cong L^0i_a^{\ast}\mathcal F\cong L^0i_a^{\ast}\tau_{\ge 0}\mathcal F, \qquad L^ji_a^{\ast}\tau_{<0}\mathcal F\cong L^{j+1}i_a^{\ast}\tau_{\ge 0}\mathcal F, \forall j\ge 0.\]
    In particular $\tau_{\ge 0}\mathcal F$ is a sheaf on $M$ with finite (hence affine) support over $A$. Then by base change $h_{\ast}L^0i_{\ast}\tau_{\ge0}\mathcal F$ is in fact a rank $r$ vector bundle on $A$, hence it is $A$-flat. Thus $L^ji_a^{\ast}\tau_{\ge 0}\mathcal F = 0$ for all $j> 0$, so $L^ji_a^{\ast}\tau_{<0}\mathcal F =0$ for all fibers $a$ and all $j$, whence $\tau_{<0}\mathcal F =0$ and we win. 
\end{proof}

\begin{lemma}\label{smCM}
    The complex $S_{\overline P_{d,e}^{sm}}^{-1}(\Lambda_{\delta}|_{\mathcal M_{r,d}^{sm}})$ is a Cohen-Macaulay sheaf of codimension $\widetilde{g}$ on $\mathcal M_{r,d}^{sm}$, flat over $A$. 
\end{lemma}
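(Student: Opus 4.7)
The plan is to apply \Cref{fiberlemma} to the complex $\mathcal F:=S_{\overline P^{sm}_{d,e}}^{-1}(\Lambda_\delta|_{\mathcal M_{r,d}^{sm}})$ over the smooth abelian fibration $h:\mathcal M_{r,d}^{sm}\to A$. The conclusion of that lemma---that $\mathcal F$ is a coherent sheaf, flat over $A$, with finite support---yields the proposition at once: finite-flatness over the smooth base $A$ of dimension $\widetilde g$ forces $\mathcal F$ to be locally free of finite rank over $\mathcal O_A$, hence Cohen--Macaulay of depth $\widetilde g$, while $\dim \mathrm{supp}\,\mathcal F = \widetilde g = \dim\mathcal M_{r,d}^{sm}-\widetilde g$ gives the claimed codimension. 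Since $\overline P^{sm}$ restricts fiberwise to the classical Poincar\'e bundle on the Jacobian of each smooth spectral curve, standard Mukai duality guarantees that $S^{-1}$ commutes with base change, yielding $\mathcal F|_{h^{-1}(a)}\cong S_a^{-1}(\Lambda_\delta|_{h^{-1}(a)})$ for each $a$ in the image of $h|_{\mathcal M^{sm}}$.

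The crux is then to show that $S_a^{-1}(\Lambda_\delta|_{h^{-1}(a)})$ is a $0$-dimensional sheaf in cohomological degree zero of constant length $r_\delta:=\prod_{i=1}^{r-1}{r\choose i}^{m_i}$. For each $c\in C$, flat base change identifies $\EE^d_c|_{h^{-1}(a)}$ with the pushforward to $h^{-1}(a)$ of the restriction of the universal line bundle on $\widetilde C_a\times h^{-1}(a)$ to $\pi_a^{-1}(c)\times h^{-1}(a)$, where $\pi_a:\widetilde C_a\to C$ is the spectral cover. The scheme $\pi_a^{-1}(c)$ is Artinian of length $r$, and a composition series for it induces a filtration of the rank $r$ vector bundle $\EE^d_c|_{h^{-1}(a)}$ whose $r$ graded pieces are degree-zero line bundles on the Jacobian (one for each point of $\pi_a^{-1}(c)$, counted with multiplicity). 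Taking successive exterior and tensor powers of these filtered bundles, $\Lambda_\delta|_{h^{-1}(a)}$ itself admits a filtration whose graded pieces are all degree-zero line bundles, with a total of $\prod_{i,j}{r\choose r-i}=r_\delta$ pieces.

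For each such graded piece $L$, one has $S_a^{-1}(L)\cong k(y_L)$ concentrated in cohomological degree zero for some point $y_L$ of the dual Jacobian; the shift $[\widetilde g]$ in \Cref{eq:inverse} is precisely what normalizes this, in particular yielding $S_a^{-1}(\mathcal O)\cong k(0)$ for the trivial line bundle. Applying $S_a^{-1}$ to the filtration of $\Lambda_\delta|_{h^{-1}(a)}$, each resulting distinguished triangle in $D^b_{\QCoh}$ collapses into a short exact sequence of sheaves in degree zero, since the $H^{-1}$ and $H^{1}$ terms of the associated long exact sequences vanish whenever the outer terms are skyscrapers. Inductively, $S_a^{-1}(\Lambda_\delta|_{h^{-1}(a)})$ is thus an iterated extension of length-one skyscraper sheaves, i.e.\ a $0$-dimensional sheaf of total length $r_\delta$ in cohomological degree zero.

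With the fiberwise statement verified, \Cref{fiberlemma} applies---after \'etale descent to extend its scheme-level statement to the DM stack $\mathcal M_{r,d}^{sm}$---and completes the argument. The main technical obstacle in executing this plan is ensuring the uniformity in $a$ of the degree-zero cohomological concentration of $S_a^{-1}$; the filtration-by-degree-zero-line-bundles approach sidesteps this by reducing the problem to a single line bundle at a time, for which the concentration in degree zero is unconditional under the shift convention of \Cref{eq:inverse}.
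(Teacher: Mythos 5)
Your proposal is correct, and it follows the paper's skeleton---reduce to the fibers of $h$ by proper base change and feed the fiberwise computation into \Cref{fiberlemma}---but both substantive sub-steps are carried out by different means. For the Cohen--Macaulay claim, you observe that the output of \Cref{fiberlemma} (a sheaf, flat over $A$, with support finite over $A$) already implies that its pushforward to the regular base $A$ is locally free, so the sheaf has $\operatorname{depth}\widetilde g$ at each point of its $\widetilde g$-dimensional support; the paper instead identifies the shifted derived dual of $S^{-1}(\Lambda_\delta)$ with $S^{-1}(\Lambda_\delta^\vee)$ via Verdier duality and \Cref{dual}, and checks that both transforms are sheaves. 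Your route makes that duality computation unnecessary. For the fiberwise input, you filter $\EE^d_c|_{h^{-1}(a)}=\pi_{a\ast}(\mathcal L_a|_{\pi_a^{-1}(c)\times J_a})$ by a composition series of the length-$r$ Artinian scheme $\pi_a^{-1}(c)$, so that $\Lambda_\delta|_{h^{-1}(a)}$ becomes an iterated extension of $\rank(\Lambda_\delta)$ elements of $\Pic^0(J_a)$, and transform the filtration term by term; the paper gets the same conclusion by quoting Mukai's Examples 2.9 and 3.2 on homogeneous bundles. These are essentially the same fact---your filtration is in effect the proof that these bundles are homogeneous---but your version is more self-contained. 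The one point requiring more care is the phrase ``restricts fiberwise to the classical Poincar\'e bundle'': the fiber of $\mathcal M^{sm}_{r,d}\to A$ is a $\mu_r$-gerbe over a torsor under $J_a$, and by \Cref{localpoincare} the pullback of $\overline P_{d,e}$ along the covers $q_d\times q_e$ of \Cref{fiberwise} agrees with the classical kernel only up to the twist $L_d^{\otimes e}\boxtimes L_e^{\otimes d}$ by numerically trivial line bundles. This is harmless for your argument---tensoring the input by an element of $\Pic^0$ preserves the class of bundles filtered by $\Pic^0$, and the $p_2$-twist only tensors the output by a line bundle, changing neither the dimension nor the length of its support---but the reduction to $J_a$ through $q_d,q_e$ and these twists should be made explicit, as the paper does.
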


\begin{proof} 
    We want to show that $S^{-1}_{\overline P^{sm}_{d,e}}(\Lambda_{\delta}|_{\mathcal M_{r,d}^{sm}})$ is a sheaf, and that its dual is a sheaf shifted in degree $\widetilde g$. But Verdier duality shows that
    \begin{align*}
        \Rshom(p_{1\ast}(p_2^{\ast}\Lambda_{\delta}\otimes \overline P_{d,e}^{\vee}|_{\mathcal M_{r,e}^{sm}})[\widetilde{g}], \mathcal O_{\mathcal M_{r,d}^{sm}}) &= p_{1\ast}\Rshom(p_2^{\ast}\Lambda_{\delta}\otimes \overline P_{d,e}^{\vee}|_{\mathcal M_{r,e}^{sm}})[\widetilde{g}], \mathcal O_{\mathcal M_{r,d}^{sm}\times_{ A }\mathcal M_{r,e}^{sm}}[\widetilde{g}])\\
        &= p_{1\ast}(p_2^{\ast}\Lambda_{\delta}^{\vee}\otimes \overline P_{d,e}|_{\mathcal M_{r,e}^{sm}}) = \iota^{\ast}S_{\overline P_{d,e}^{sm}}^{-1}(\Lambda_{\delta}^{\vee}|_{\mathcal M_{r,e}^{sm}})[-\widetilde g],
    \end{align*}
    where the last step is obtained by applying \Cref{dual}. 
    
    We first compute the supports fiberwise. Fix $a\in  A ^{sm}$, and let $J_a$ be the Jacobian of the corresponding spectral curve $C_a$. By smooth proper base change, we have
    \[S_{\overline P_{d,e}^{sm}}^{-1}(\Lambda_{\delta})|_{h^{-1}(a)} = S_{\overline P_{d,e}^{sm}|_{h^{-1}(a)\times h^{-1}(a)}}(\Lambda_{\delta}|_{h^{-1}(a)}).\]
    Pick $q_d, q_e:J_a\rightarrow h^{-1}(a)$ as in \Cref{fiberwise}, so that 
    \[S_{\overline P_{d,e}^{sm}}^{-1}(\Lambda_{\delta})|_{h^{-1}(a)} = (q_d^{\ast})^{-1}\circ S_{P_a\otimes p_1^{\ast}L_d\otimes p_2^{\ast}L_e}^{-1}\circ q_e^{\ast},\]
    where $P_a$ is the use Poincar\'e line bundle on the product of Jacobians $J_a\times J_a$, and $L_d, L_e$ are homogeneous line bundles on $J_a$.

    By \Cref{fiberlemma}, it suffices to show that $S_{P\otimes p_2^{\ast}L_e\otimes p_1^{\ast}L_d}^{-1}(q_e^{\ast}(\Lambda_{\delta}^{\pm1}))$ are supported on finite subschemes of length $\rank(\Lambda_{\delta})$. By \cite[Example 3.2]{Muk}, since the $L_e, L_d$ are homogeneous line bundles, this is the same as showing that the $q_e^{\ast}(\Lambda_{\delta}^{\pm1}|_{h^{-1}(a)})$ are homogeneous vector bundles; for this it is enough to show that the $q_e^{\ast} \EE_{c_{ij}}^e|_{h^{-1}(a)}$ are homogeneous vector bundles on $J_a$, where $\EE^e$ is the universal Higgs bundle on $C\times \mathcal M_{r,e}^s$. We'll in fact show that $(\id\times q_e)^{\ast}\EE^e|_{C_a\times h^{-1}(a)}$ is a $C$-family of homogeneous vector bundles on $J_a$. 

    Let $\pi:\widetilde C\rightarrow C\times A$ be the projection of the spectral curve; we know that $(\pi\times\id)_{\ast}\mathcal F^e = \EE^e$. But by definition of $q_e$, we have
    \[(\id\times q_e)^{\ast}(\mathcal F^e|_{C_a\times h^{-1}(a)}) = \mathcal  L_a\otimes p_{C_a}^{\ast}L_1\otimes p_{J_a}^{\ast}L_2,\] 
    where $\mathcal L_a$ is the universal line bundle on $C_a\times J_a$, $L_1$ is a line bundle on $C_a$, and $L_2$ is a torsion line bundle on $J_a$. But $L_a$ is normalized on a point of $C_a$ by definition, so $\mathcal L_a$ can be viewed as a $C_a$-family of homogeneous line bundles on $J_a$. It follows from \cite[Example 2.9 and 3.2]{Muk} that $(\pi_a\times\id)_{\ast}\mathcal L_a$ is a $C$-family of homogeneous vector bundles on $J_a$; thus the same holds for
    \[(\pi_a\times\id)_{\ast}(\mathcal L_a\otimes p_{C_a}^{\ast}L_1\otimes p_{J_a}^{\ast}L_2) = (\id\times q_e)^{\ast}\EE^e|_{h^{-1}(a)\times C_a},\]
    and the result follows. 

\end{proof}

\begin{proposition}\label{smoothmirror}
    \Cref{main2} holds over the locus of smooth spectral curves, i.e.
    \[S_{\overline P_{d,e}^{sm}}^{-1}(\Lambda_{\delta}) =\mathcal O_{\mathcal W_{\delta}^+}(-e)|_{\mathcal M_{r,d}^s}.\]
\end{proposition}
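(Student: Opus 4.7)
The plan is to combine \Cref{sharpmirror} with the structural properties from \Cref{smCM}, and to conclude using the triviality of $\operatorname{Pic}(W_\delta^+)$.

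Set $F_1 := S^{-1}_{\overline P^{sm}_{d,e}}(\Lambda_\delta)$ and $F_2 := \mathcal O_{\mathcal W_\delta^+}(-e)|_{\mathcal M_{r,d}^{sm}}$. First, applying $S^{-1}$ to \Cref{sharpmirror} (via the formula \Cref{eq:inverse}) gives $F_1 = F_2$ on $\mathcal M_{r,d}^{\sharp}\cap \mathcal M_{r,d}^{sm}$, a dense open of $\mathcal M_{r,d}^{sm}$. By \Cref{smCM}, $F_1$ is a Cohen--Macaulay sheaf of codimension $\widetilde g$ on $\mathcal M_{r,d}^{sm}$, flat over $A^{sm}$, whose support is finite over $A^{sm}$.

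The next step is to identify $\operatorname{supp}(F_1) = \mathcal W_\delta^+ \cap \mathcal M_{r,d}^{sm}$. Purity of Cohen--Macaulay support gives that every irreducible component of $\operatorname{supp}(F_1)$ has dimension $\widetilde g = \dim A^{sm}$; combined with finiteness over $A^{sm}$, each such component dominates $A^{sm}$. Over $A^{\sharp}\cap A^{sm}$, Step~1 identifies the support with the irreducible $\mathcal W_\delta^+ \cap h^{-1}(A^{\sharp}\cap A^{sm})$; since each component of $\operatorname{supp}(F_1)$ must intersect the dense open $h^{-1}(A^{\sharp})$, no additional components can arise, so $\operatorname{supp}(F_1) = \mathcal W_\delta^+ \cap \mathcal M_{r,d}^{sm}$.

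Finally, set $X := \mathcal W_\delta^+ \cap \mathcal M_{r,d}^{sm}$, which is smooth since $W_\delta^+ \cong \mathbb A^{\widetilde g}$. Both $F_1$ and $F_2$ are then maximal Cohen--Macaulay on $X$ of generic rank $1$ (rank $1$ by the agreement on a dense open), hence twisted line bundles. Since $\operatorname{Pic}(\mathbb A^{\widetilde g}) = 0$, the excision sequence forces $\operatorname{Pic}(W_\delta^+ \cap M_{r,d}^{sm}) = 0$; under the splitting $X \cong (W_\delta^+ \cap M_{r,d}^{sm}) \times B\mu_r$ every twisted line bundle on $X$ is then classified by its $\mu_r$-character alone. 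Agreement on $\mathcal M_{r,d}^{\sharp}\cap \mathcal M_{r,d}^{sm}$ fixes this character, giving $F_1 \cong F_2$ globally. The main obstacle is the support identification in the middle step, where the purity of the Cohen--Macaulay sheaf $F_1$ together with the finiteness provided by \Cref{smCM} is essential.
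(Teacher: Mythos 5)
Your proposal is correct and follows essentially the same route as the paper: use \Cref{smCM} to get that $S^{-1}_{\overline P^{sm}_{d,e}}(\Lambda_\delta)$ is Cohen--Macaulay with support finite and flat over $A^{sm}$, use \Cref{sharpmirror} on the dense open $A^{\sharp}\cap A^{sm}$ to identify that support with $\mathcal W_\delta^+$, and then conclude from the fact that a maximal Cohen--Macaulay sheaf of generic rank one on $\mathcal W_\delta^+$ in the $(-e)$-isotypic component must be $\mathcal O_{\mathcal W_\delta^+}(-e)$, since $W_\delta^+$ is an affine space with trivial Picard group. Your added details (purity of the support and the excision argument for $\operatorname{Pic}$ of the open subset) are correct elaborations of steps the paper leaves implicit.
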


\begin{proof}
    By \Cref{smCM}, we know that $Z:= \Supp S_{\overline P_{d,e}^{sm}}^{-1}(\Lambda_{\delta}|_{ A ^{sm}})$ is finite over $ A ^{sm}$; moreover, since $S^{-1}(\Lambda_{\delta})$ is flat over $A$, each generic point of $Z$ lies over the generic point of $A^{sm}$. But by \Cref{sharpmirror}, $S^{-1}(\Lambda_{\delta})$ agrees generically $\mathcal O_{\mathcal W_{\delta}^+}\otimes \mathcal O_{\mathcal W_{\delta}^+}(-e)$; thus we conclude that $Z = \mathcal W_{\delta}^+$. Then $S_{\overline P_{d,e}^{sm}}^{-1}(\Lambda_{\delta})$ is realized as a maximal Cohen-Macaulay sheaf of generic rank $1$ on $\mathcal W_{\delta}^+$ lying in the $-e$-isotypic component of the derived category; this can only be $\mathcal O_{\mathcal W_{\delta}^+}(-e)$. 
\end{proof}

\subsection{Codimension bounds}\label{mainproof} 
To complete the proof, we more or less need to repeat the strategy employed above. 

\begin{lemma}\label{lem:CM}
    The object $S_{\overline P_{d,e}}(\mathcal O_{\mathcal W_{\delta}^+}(-e))$ is a twisted vector bundle on $\widetilde{\mathcal M}_{r,e}^s$. 
\end{lemma}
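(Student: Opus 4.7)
The plan is to show that $S_{\overline P_{d,e}}(\mathcal O_{\mathcal W_\delta^+}(-e))$ is a maximal Cohen--Macaulay sheaf on the smooth stack $\widetilde{\mathcal M}^s_{r,e}$; since a MCM sheaf on a smooth stack is locally free by Auslander--Buchsbaum, this will force it to be a twisted vector bundle. Concretely, I would compute the Fourier--Mukai transform directly using the finiteness of the upward flow over the Hitchin base.

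First, I would set up the geometry of the relevant subscheme. By \Cref{flowsupport} we have $\mathcal W_\delta^+ \subset \widetilde{\mathcal M}^s_{r,d}$, and by \Cref{hh518} the map $\mathcal W_\delta^+ \to A$ is finite flat. Combined with the flatness of $\widetilde{\mathcal M}^s_{r,e}\to A$ (which follows from miracle flatness, since the Hitchin map has smooth source, smooth target, and equidimensional fibers of dimension $\widetilde g$), the base-change
\[ p_2: \mathcal W_\delta^+\times_A \widetilde{\mathcal M}^s_{r,e} \longrightarrow \widetilde{\mathcal M}^s_{r,e} \]
is finite flat, and the closed immersion
\[ i:\mathcal W_\delta^+\times_A \widetilde{\mathcal M}^s_{r,e}\hookrightarrow \mathcal M^s_{r,d}\times_A \widetilde{\mathcal M}^s_{r,e} \]
is a regular embedding of codimension $\widetilde g$, obtained by base-changing the Lagrangian regular embedding $\mathcal W_\delta^+\hookrightarrow \mathcal M^s_{r,d}$ (both smooth) along the flat projection $p_1$.

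Next, I would show that $Li^{\ast}\overline P_{d,e} = i^{\ast}\overline P_{d,e}$ is concentrated in degree $0$. The ambient $\mathcal M^s_{r,d}\times_A \widetilde{\mathcal M}^s_{r,e}$ is Cohen--Macaulay: it is flat over the smooth $\widetilde{\mathcal M}^s_{r,e}$ with fibers given by Hitchin fibers in $\mathcal M^s_{r,d}$, and these fibers are local complete intersections inside the smooth $\mathcal M^s_{r,d}$. Since $\overline P_{d,e}$ is MCM of full dimension $3\widetilde g$ on this ambient, any regular sequence cutting out a subscheme of codimension $\widetilde g$ is automatically a regular sequence on $\overline P_{d,e}$, so the Koszul complex of the ideal of $i$ yields a resolution computing the derived pullback. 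Therefore $i^{\ast}\overline P_{d,e}$ is a sheaf, MCM of dimension $2\widetilde g$ on $\mathcal W_\delta^+\times_A \widetilde{\mathcal M}^s_{r,e}$.

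Finally, because $p_2$ is finite on the support, all higher direct images vanish and
\[ S_{\overline P_{d,e}}(\mathcal O_{\mathcal W_\delta^+}(-e)) = p_{2\ast}\bigl(i^{\ast}\overline P_{d,e}\otimes p_1^{\ast}\mathcal O_{\mathcal W_\delta^+}(-e)\bigr). \]
Finite (flat) pushforward preserves the CM property and dimension of support, so this is a MCM coherent sheaf of dimension $2\widetilde g$ on $\widetilde{\mathcal M}^s_{r,e}$; as the latter is smooth of dimension $2\widetilde g$, Auslander--Buchsbaum gives that it is locally free, hence a (twisted) vector bundle. The main technical point is the verification of the Cohen--Macaulay and regularity hypotheses in the stacky setting, but since all assertions can be checked \'etale locally they reduce to standard commutative-algebra facts about MCM modules over Cohen--Macaulay rings.
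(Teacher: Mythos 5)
Your proof is correct, but it takes a genuinely different route from the paper's. The paper also reduces, via \Cref{flowsupport}, to the locus $\widetilde{\mathcal M}^s_{r,d}\times_A\widetilde{\mathcal M}^s_{r,e}$ and uses finiteness of $W_\delta^+\to A$ to replace $\R p_{2\ast}$ by $p_{2\ast}$; but where you invoke the maximal Cohen--Macaulay property of $\overline P_{d,e}$, the paper instead invokes \emph{flatness over both factors}, which it deduces from the symmetry of the determinantal formula \Cref{eq:regular} defining $P_{d,e}$ (the stated proposition only asserts flatness over the second factor). Flatness over the first factor makes the derived restriction to $\mathcal W_\delta^+\times_A\widetilde{\mathcal M}^s_{r,e}$ automatically underived, and flatness over the second factor makes the resulting sheaf $P_\delta$ flat over $\widetilde{\mathcal M}^s_{r,e}$, so its finite pushforward is locally free --- no Cohen--Macaulay bookkeeping needed. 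Your argument replaces the first use of flatness by the observation that $\mathcal W_\delta^+\times_A\widetilde{\mathcal M}^s_{r,e}$ is regularly embedded of codimension $\widetilde g$ in a Cohen--Macaulay ambient, so that any local regular sequence cutting it out is regular on the MCM sheaf $\overline P_{d,e}$ (and hence $Li^\ast=i^\ast$), and replaces the second by the fact that finite pushforward preserves depth together with Auslander--Buchsbaum on the smooth target. Both arguments are sound; yours buys independence from the ``flat over the first factor by symmetry'' claim, which is only implicit in the construction, at the cost of verifying the CM-ness of the ambient and the codimension count for the regular embedding --- all of which you do correctly (the lci structure of Hitchin fibers and the Lagrangian codimension $\widetilde g$ of $W_\delta^+$ give exactly what is needed). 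One small remark: the finite flatness of $p_2$ follows from that of $W_\delta^+\to A$ by base change alone; the miracle-flatness argument for $\widetilde{\mathcal M}^s_{r,e}\to A$ is not needed there, though you do need the flatness of $p_1$ for the base change of the regular embedding, so it is not wasted.
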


\begin{proof}
     First, notice that $\overline P_{d,e}|_{\widetilde{\mathcal M}_{r,d}^s\times_A\widetilde{\mathcal M}_{r,e}^s}$ is flat over the both factors, since \Cref{eq:regular} is symmetric. By \Cref{flowsupport} it suffices to work on this locus. But now just compute:
     \[S_{\overline P_{d,e}}(\mathcal O_{\mathcal W_{\delta}^+}(-e)) = \R p_{2\ast}(\overline P_{d,e}\otimes p_1^{\ast}\mathcal O_{\mathcal W_{\delta}^+}(-e)|_{\mathcal W_{\delta}^+\times_{ A }\widetilde{M}_{r,d}^s}).\]
     But $(\overline P_{d,e}\otimes p_1^{\ast}\mathcal O_{\mathcal W_{\delta}^+}(-e))|_{\mathcal W_{\delta}^+\times_A\widetilde{\mathcal M}^s_{r,e}}$ descends to sheaf on $W_{\delta}^+\times_A\widetilde{\mathcal M}_{r,d}^s$, which for simplicity we call $P_\delta$. 
     Moreover, the map $ W_{\delta^+}\times_{ A }\widetilde{\mathcal M}_{r,d}^s\rightarrow\widetilde{\mathcal M}_{r,d}^s$ is finite flat (in particular affine), so in fact
     \[S_{\overline P_{d,e}}(\mathcal O_{\mathcal W_{\delta}^+}(-e))|_{\widetilde{\mathcal M}_{r,e}^s}=Rp_{2\ast}P_{\delta}|_{\widetilde{\mathcal M}_{r,e}^s} = p_{2\ast}P_{\delta}|_{\widetilde{\mathcal M}_{r,e}^s},\]
     and since $P_{\delta}$ is flat over the second factor the result follows. 
\end{proof}

\noindent Now we simply need to give a codimension bound:
\begin{proposition}\label{sharpcodim}
    The complement of $ A ^\sharp\cup  A ^{sm}$ in $ A $ has codimension $\ge 2$
\end{proposition}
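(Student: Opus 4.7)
The plan is to decompose $A\setminus(A^\sharp\cup A^{sm})$ into two pieces and bound each separately. Since $A^\sharp\subseteq A^{el}$, one has the decomposition
\[A\setminus(A^\sharp\cup A^{sm})\;=\;\bigl(A\setminus A^{el}\bigr)\,\cup\,\bigl((A^{el}\setminus A^\sharp)\cap(A\setminus A^{sm})\bigr),\]
and I will show each piece has codimension $\ge 2$.

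For the non-integral locus $A\setminus A^{el}$, I will invoke the (implicit) assumption $r\ne 2$ or $g>2$ from \Cref{main2} together with a classical dimension count on factorizations of the characteristic polynomial. For a nontrivial factorization $\chi=\chi_1\chi_2$ with $\deg\chi_i=r_i$, the image of $A_{r_1}\times A_{r_2}\to A_r$ has codimension $2r_1r_2(g-1)-1$, minimized over $r_1+r_2=r$ at $2(r-1)(g-1)-1$; this is $\ge 2$ precisely when $(r-1)(g-1)\ge 2$, i.e.\ outside the excluded case $r=g=2$. The locus of non-reduced spectral curves sits inside the reducible locus, so the same bound applies.

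For the second piece, set $S:=\div b\cup\{c_0\}$, a finite set of points in $C$, and for each $c\in S$ let $B_c\subseteq A$ be the closed subscheme cut out by the vanishing of $\operatorname{disc}(\chi_a)$ at the point $c$. I plan to show: (a) each $B_c$ is an irreducible hypersurface, using that the evaluation map $A\to\bigoplus_{i=1}^r(K^i)_c$ is surjective (by base-point-freeness of $K^i$ for $g\ge 2$) together with the classical irreducibility of the universal discriminant polynomial in $r$ variables; and (b) one can produce a point of $B_c$ whose discriminant section vanishes at $c$ with multiplicity exactly $1$, corresponding to a smooth spectral curve with a simple branch at $c$, whence $B_c\not\subseteq A\setminus A^{sm}$. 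Combining (a) and (b), $B_c\cap(A\setminus A^{sm})$ is a proper closed subscheme of the irreducible hypersurface $B_c$, so has codimension $\ge 2$ in $A$, and the union over the finite set $S$ preserves this bound. Since $(A^{el}\setminus A^\sharp)\cap(A\setminus A^{sm})\subseteq\bigcup_{c\in S}\bigl(B_c\cap(A\setminus A^{sm})\bigr)$, the second piece has codimension $\ge 2$.

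The only subtle step is (a)---showing no irreducible component of $B_c$ is contained in $A\setminus A^{sm}$---which is handled cleanly by the irreducibility of the universal discriminant pulled back along a surjective linear map. The other ingredients are routine dimension counts, with the assumption on $(r,g)$ playing its essential role only in the bound on the non-integral locus.
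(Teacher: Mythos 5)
Your treatment of the key locus coincides with the paper's: you realize $B_c$ (the paper's $Z_p$) as the preimage of the irreducible universal discriminant hypersurface under the surjective linear evaluation map $A\to\bigoplus_{i=1}^r(K_C^{\otimes i})_c$, and conclude it is an irreducible hypersurface. You diverge in two places. First, you fold the non-integral locus $A\setminus A^{\el}$ into the proposition via the factorization count $2r_1r_2(g-1)-1$; the paper's proof silently omits this piece (it is handled separately in the proof of the main theorems under the hypothesis $r>2$ or $g>2$), so your accounting is the more careful one, and your identification of $r=g=2$ as the failing case is correct. Second, and more substantively, to show $B_c\not\subseteq\Sigma:=A\setminus A^{sm}$ the paper proves that $\Sigma$ is an irreducible divisor and computes its degree $(3r^2-2)(2g-2)$ by a Chern class calculation on $\PP(\mathcal O_C\oplus K_C)$, then compares with $\deg B_c=2r-2$; you instead propose to exhibit a single point of $B_c$ whose spectral curve is smooth, trading the intersection theory for an existence claim.

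That existence claim is where the gap lies. Vanishing of $\operatorname{disc}(\chi_a)$ to order exactly $1$ at $c$ only forces $\widetilde C_a$ to be smooth \emph{over} $c$ (locally $y^2=t$); it controls nothing away from the fiber over $c$, so ``corresponding to a smooth spectral curve'' does not follow, and the point is never actually produced. Worse, the existence genuinely fails in one case: for $r=g=2$ and $c$ a Weierstrass point, $h^0(K_C^{\otimes2}(-c))=h^0(K_C^{\otimes2}(-2c))=2$, so every section of $K_C^{\otimes2}$ vanishing at $c$ vanishes doubly there; completing the square, every rank-two spectral curve branched over $c$ is singular over $c$, i.e.\ $B_c\subseteq\Sigma$. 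So the hypothesis of \Cref{main2}, which you invoke only for the non-integral piece, is also needed for your step (b) (and the paper's own irreducibility claim for $\Sigma$ breaks in the same case). Granting $(r,g)\ne(2,2)$, the gap is fixable without the multiplicity-one condition: take $\chi=x^r-q$ with $q\in H^0(K_C^{\otimes r})$ having reduced zero divisor through $c$, which exists because $\deg K_C^{\otimes r}(-c)\ge 2g+1$ then, so the generic member of $|K_C^{\otimes r}(-c)|$ is reduced and misses $c$; the resulting curve is smooth, integral by Eisenstein at a simple zero of $q$, and branched over $c$, giving the required point of $B_c\setminus\Sigma$.
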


\begin{proof}
    Let $Z_p$ be the locus of curves ramified at $p$; then we have a cartesian diagram
    \[\begin{tikzcd}
        Z_p \arrow[r] \arrow[d] & \Delta\arrow[d] \\
         A \arrow{r}{\ev_p} & \CC^r = \{x^r + c_1x^{r-1} +\dots + c_r, c_i\in \CC\}
    \end{tikzcd}\]
    where $\ev_p:H\rightarrow \CC^r = \{x^r + c_1x^{r-1} + \cdots + c_r, c_i\in \CC\}$ is the evaluation map at $p$ and $\Delta$ is cut out by the usual discriminant. The bottom map is a linear map, and $\Delta$ is an irreducible degree $2r-2$ divisor, so the same is true for $Z_p$ as a divisor of $ A $. 

    Let $\overline S = \PP(\mathcal O_C \oplus K_C)$ be the natural compactification of $S$, and let $\overline\pi:\overline S\rightarrow C$ be the projection. The Hitchin base compactifies naturally as the linear system of divisors corresponding to the bundle $L:=\overline\pi^{\ast}(K_C)^{\otimes r}\otimes \mathcal O_{\overline S}(r)$, with a corresponding spectral curve $\overline{\widetilde C}\xrightarrow{\alpha}|L|$ compactifying the spectral curve $\widetilde C$. By \cite[III.38, 39]{Kl1}, the ramification divisor $R$ of $\widetilde C$ in $\overline S \times |L|$ is of the form
    \[[R] = \sum_{i=0}^2c_{2-i}(p_1^{\ast}\Omega_{\overline S}^1)\cdot (p_1^{\ast}c_1(L)+p_2^{\ast}\mathcal O_{|L|}(1))^{i+1}.\]
    By the argument in \cite[{}1.2]{KP}, this is the class of an irreducible divisor, and its image is the locus of singular spectral curves. We compute the degree of $p_{2\ast}[R]$ as follows: for convenience write the hyperplane class $\mathcal O_{|L|}(1) = H$. From projection formula we see that
    \begin{equation}\label{ramdivisor}
        p_{2\ast}[R] = p_{2\ast}(c_2(p_1^{\ast}\Omega_{\overline S}^1))) \cdot H + 2 p_{2\ast}(p_1^{\ast}(c_1(L)\cdot c_1(\Omega_{\overline S}^1)))\cdot H + 3p_{2\ast}(p_1^{\ast}c_1(L)^2)\cdot H.
    \end{equation}
    Write $f$ as the class of a fiber of $\overline\pi$, and $C_0$ as the class of the zero section (these are classes of $\CH^1(\overline S)$). From \cite[\S5.2]{Har}, one finds $\deg (C_0^2) = 2-2g, f^2 = 0, \deg (C_0\cdot f) = 1$. Moreover, one has $c_1(\Omega_{\overline S}^1) = -2C_0$, whence by the cotangent exact sequence
    \[0\rightarrow \overline \pi^{\ast}(K_C) \rightarrow \Omega_{\overline S}^1 \rightarrow \Omega_{\overline S/C}^1 \rightarrow 0,\]
    one has $c_1(\pi^{\ast}K_C) = (2g-2)f$, so $c_1(\Omega_{\overline S/C}^1) = -2C_0 - (2g-2)f$, and an easy computation shows $c_2(\Omega_{\overline S/C}^1) = 4-4g$. This computes the first term of \Cref{ramdivisor}. For the other terms, first note:
    \[c_1(L) = r(C_0 + (2g-2)f),\] 
    so $\deg(c_1(L)\cdot c_1(\Omega_{\overline S}^1)) = 0$; one also computes easily $\deg(c_1(L)^2) = r^2(2g-2)$. Combining this, we get $\deg(p_{2\ast}[R]) = (3r^2-2)(2g-2)$. The generic singular curve has a single nodal singularity, so the map $R \rightarrow \Im(R)$ is degree $1$ and hence the locus of singular curves on $|L|$ is an irreducible degree $(3r^2-2)(2g-2)$ divisor. On the other hand, the degree of the divisor cutting out the locus of curves ramified at $p$ is $2r-2$; thus the two irreducible divisors are distinct and their intersection is codimension $\ge2$. 
\end{proof}

\begin{proof}[Proof of \Cref{main1}, \Cref{main2}]
    First, by \cite[Proposition 4.2]{MSY}, we know that over the locus of integral spectral curves in $ A $, $S_{\overline P_{d,e}^{\el}}$ has an inverse given by \Cref{eq:inverse}. Then \Cref{smoothmirror} and \Cref{sharpmirror} show that $S_{\overline P_{d,e}^{\el}}^{-1}(\Lambda_{\delta})$ agrees with $\mathcal O_{\mathcal W_{\delta}^+}(-e)$ over $ A ^{sm}$ and $ A ^{\sharp}$, respectively. In particular $S^{-1}(\Lambda_{\delta})|_{ A ^{sm}\cup A ^{\sharp}}$ is a $(-e)$-isotypic line bundle on $\mathcal W_{\delta}^+\cap (\mathcal M_{r,d}^{sm}\cup \mathcal M_{r,d}^{\sharp})$. It follows that 
    \[S_{\overline P_{d,e}^{\el}}^{-1}(\Lambda_{\delta})|_{ A ^{sm}\cup A ^{\sharp}} = \mathcal O_{\mathcal W_{\delta}^+}(-e)|_{ A ^{sm}\cup A ^{\sharp}} \implies S_{\overline P_{d,e}^{\el}}(\mathcal O_{\mathcal W_{\delta}^+}(-e))|_{ A ^{sm}\cup A ^{\sharp}} = \Lambda_{\delta}|_{ A _{sm}\cup A ^{\sharp}}.\]
    By \Cref{lem:CM}, $S_{\overline P_{d,e}^{\el}}(\mathcal O_{\mathcal W_{\delta}^+(-e)})|_{\widetilde{\mathcal M}_{r,e}^s}$ is a vector bundle on $\widetilde{\mathcal M}_{r,e}^s$, which by above agrees with the vector bundle $\Lambda_\delta$ on $\mathcal M_{r,e}^{\sharp}\cup \mathcal M_{r,e}^{sm}$. By \Cref{sharpcodim}, this an open whose complement has codimension $\ge 2$ in $\mathcal M_{r,e}^{\el}$; \Cref{lem:CMprop} shows \Cref{main1}. When $r>2$ or $g>2$, $M_{r,e}^{\el}\subset M_{r,e}$ is an open whose complement has codimension $\ge 2$, so again \Cref{main2} follows from \Cref{lem:CMprop}. 
\end{proof}

\printbibliography

\end{document}